\documentclass[11pt]{amsart}
\usepackage{latexsym,amssymb,amsfonts,amsmath,graphicx,url,color, verbatim}
\usepackage[all]{xy}
\usepackage{verbatim}
\usepackage{accents, wasysym}
\usepackage[usenames, dvipsnames]{xcolor}
\usepackage{ytableau}

\newcommand{\blank}{\phantom{2}}
\usepackage{tikz, calc, ifthen}
\usetikzlibrary{calc, shapes, backgrounds,arrows,positioning}
\tikzset{>=stealth',
  head/.style = {fill = white, text=black}, 
  pil/.style={->,thick},
  junct/.style = {draw,circle,inner sep=0.5pt,outer sep=0pt, fill=black}
  }
\definecolor{light-gray}{gray}{0.85}
\definecolor{dark-gray}{gray}{0.25}

\usepackage{tikz,tkz-graph}
\newcounter{x}
\newcounter{y}
\newcounter{z}

\newcommand\xaxis{210}
\newcommand\yaxis{-30}
\newcommand\zaxis{90}

\newcommand\topside[3]{
  \fill[fill=white, draw=black,shift={(\xaxis:#1)},shift={(\yaxis:#2)},
  shift={(\zaxis:#3)}] (0,0) -- (30:1) -- (0,1) --(150:1)--(0,0);
}

\newcommand\leftside[3]{
  \fill[fill=light-gray, draw=black,shift={(\xaxis:#1)},shift={(\yaxis:#2)},
  shift={(\zaxis:#3)}] (0,0) -- (0,-1) -- (210:1) --(150:1)--(0,0);
}

\newcommand\rightside[3]{
  \fill[fill=dark-gray, draw=black,shift={(\xaxis:#1)},shift={(\yaxis:#2)},
  shift={(\zaxis:#3)}] (0,0) -- (30:1) -- (-30:1) --(0,-1)--(0,0);
}

\newcommand\cube[3]{
  \topside{#1}{#2}{#3} \leftside{#1}{#2}{#3} \rightside{#1}{#2}{#3}
}

\newcommand\planepartition[1]{
 \setcounter{x}{-1}
  \foreach \a in {#1} {
    \addtocounter{x}{1}
    \setcounter{y}{-1}
    \foreach \b in \a {
      \addtocounter{y}{1}
      \setcounter{z}{-1}
      \foreach \c in {1,...,\b} {
        \addtocounter{z}{1}
        \cube{\value{x}}{\value{y}}{\value{z}}
      }
    }
  }
}
\newcommand\topsidelabel[3]{
  \fill[fill=white, draw=black,shift={(\xaxis:#1)},shift={(\yaxis:#2)},
  shift={(\zaxis:#3)}] (0,0) -- (30:1) -- (0,1) --(150:1)--(0,0);
    \pgfmathtruncatemacro{\label}{#3+1};
  \node[red,shift={(\xaxis:#1)},shift={(\yaxis:#2)},
  shift={(\zaxis:(#3+.5))},xshift=.0,yshift=.0,rotate=-60] {\label};
}

\newcommand\leftsidelabel[3]{
  \fill[fill=light-gray, draw=black,shift={(\xaxis:#1)},shift={(\yaxis:#2)},
  shift={(\zaxis:#3)}] (0,0) -- (0,-1) -- (210:1) --(150:1)--(0,0);
  \pgfmathtruncatemacro{\label}{#1+1};
    \node[blue,shift={(\xaxis:(#1+.5))},shift={(\yaxis:#2)},
  shift={(\zaxis:#3)},rotate=60] {\label};
}

\newcommand\rightsidelabel[3]{
  \fill[fill=dark-gray, draw=black,shift={(\xaxis:#1)},shift={(\yaxis:#2)},
  shift={(\zaxis:#3)}] (0,0) -- (30:1) -- (-30:1) --(0,-1)--(0,0);
  \pgfmathtruncatemacro{\label}{#2+1};
  \node[green,shift={(\xaxis:#1)},shift={(\yaxis:(#2+.5))},
  shift={(\zaxis:#3)},rotate=-60] {\label};
}

\newcommand\cubelabel[3]{
  \topsidelabel{#1}{#2}{#3} \leftsidelabel{#1}{#2}{#3} \rightsidelabel{#1}{#2}{#3}
}

\newcommand\planepartitionlabel[1]{
 \setcounter{x}{-1}
  \foreach \a in {#1} {
    \addtocounter{x}{1}
    \setcounter{y}{-1}
    \foreach \b in \a {
      \addtocounter{y}{1}
      \setcounter{z}{-1}
      \foreach \c in {1,...,\b} {
        \addtocounter{z}{1}
        \cubelabel{\value{x}}{\value{y}}{\value{z}}
      }
    }
  }
}

\newcommand\xx{-1.5}
\newcommand\xxx{1}
\newcommand\yy{1}
\newcommand\yyy{1}
\newcommand\zz{0}
\newcommand\zzz{.75}

\newcommand\topsideQ[3]{
\draw ($ #1*(\xx,\xxx)+ #2*(\yy,\yyy) + #3*(\zz,\zzz) $) node[fill=black,draw,circle,inner sep=1pt] {} -- ++(\xx,\xxx) node[fill=black,draw,circle,inner sep=1pt] {} -- ++(\yy,\yyy) node[fill=black,draw,circle,inner sep=1pt] {} -- ++(-\xx,-\xxx) node[fill=black,draw,circle,inner sep=1pt] {} -- ++(-\yy,-\yyy) node[fill=black,draw,circle,inner sep=1pt] {};
}

\newcommand\leftsideQ[3]{
 \draw ($ #1*(\xx,\xxx)+ #2*(\yy,\yyy) + #3*(\zz,\zzz) $) -- ++(\zz,\zzz) -- ++(\yy,\yyy) -- ++(-\zz,-\zzz) -- ++(-\yy,-\yyy);
}

\newcommand\rightsideQ[3]{
  \draw ($ #1*(\xx,\xxx)+ #2*(\yy,\yyy) + #3*(\zz,\zzz) $) -- ++(\zz,\zzz) -- ++(\xx,\xxx) -- ++(-\zz,-\zzz) -- ++(-\xx,-\xxx);
}
\newcommand\aaaQ[3]{
\draw ($ #1*(\xx,\xxx)+ #2*(\yy,\yyy) +  #3*(\zz,\zzz) + (\zz,\zzz) $) node[fill=black,draw,circle,inner sep=1pt] {} -- ++(\xx,\xxx) node[fill=black,draw,circle,inner sep=1pt] {} -- ++(\yy,\yyy) node[fill=black,draw,circle,inner sep=1pt] {} -- ++(-\xx,-\xxx) node[fill=black,draw,circle,inner sep=1pt] {} -- ++(-\yy,-\yyy) node[fill=black,draw,circle,inner sep=1pt] {};
}
\newcommand\bbbQ[3]{
 \draw ($ #1*(\xx,\xxx)+ #2*(\yy,\yyy) + #3*(\zz,\zzz) + (\xx,\xxx) $) -- ++(\zz,\zzz) -- ++(\yy,\yyy) -- ++(-\zz,-\zzz) -- ++(-\yy,-\yyy);
}
\newcommand\cccQ[3]{
  \draw ($ #1*(\xx,\xxx)+ #2*(\yy,\yyy) + #3*(\zz,\zzz) + (\yy,\yyy) $) -- ++(\zz,\zzz) -- ++(\xx,\xxx) -- ++(-\zz,-\zzz) -- ++(-\xx,-\xxx);
}

\newcommand\cubeQ[3]{
  \topsideQ{#1}{#2}{#3} \leftsideQ{#1}{#2}{#3} \rightsideQ{#1}{#2}{#3} \aaaQ{#1}{#2}{#3} \bbbQ{#1}{#2}{#3} \cccQ{#1}{#2}{#3}
}

\newcommand\planepartitionQ[1]{
 \setcounter{x}{-1}
  \foreach \a in {#1} {
    \addtocounter{x}{1}
    \setcounter{y}{-1}
    \foreach \b in \a {
      \addtocounter{y}{1}
      \setcounter{z}{-1}
      \foreach \c in {1,...,\b} {
        \addtocounter{z}{1}
        \cubeQ{\value{x}}{\value{y}}{\value{z}}
      }
    }
  }
}

\newtheorem{theorem}{Theorem}[section]
\newtheorem{proposition}[theorem]{Proposition}
\newtheorem{problem}[theorem]{Problem}
\newtheorem{lemma}[theorem]{Lemma}
\newtheorem*{squarethm}{Theorem~3.1}

\newtheorem*{height2thm}{Theorem~3.2}
\newtheorem{corollary}[theorem]{Corollary}
\newtheorem*{kpro_resonance_cor}{Theorem~2.2}
\newtheorem*{row_resonance_cor}{Theorem~3.10}
\newtheorem*{equiv_biject}{Theorem~4.5}
\newtheorem*{very_overworked}{Corollary~4.9}
\newtheorem*{somewhat_overworked}{Corollary~4.10}
\newtheorem*{3Dprorowthm}{Theorem~3.26}
\newtheorem*{our_cfdf}{Theorem~4.12}

\newtheorem{conjecture}[theorem]{Conjecture}
\newtheorem{fact}[theorem]{Fact}
\theoremstyle{definition}
\newtheorem{definition}[theorem]{Definition}

\theoremstyle{remark}
\newtheorem{remark}[theorem]{Remark}
\numberwithin{equation}{section}
\DeclareMathOperator*{\pro}{Pro}
\DeclareMathOperator*{\row}{\rm Row}
\DeclareMathOperator{\kpro}{{\it K}-Pro}
\DeclareMathOperator{\kbk}{{\it K}-BK}
\DeclareMathOperator*{\rk}{{\rm rk}}
\newcommand{\inc}[2]{\mathrm{Inc}^{#2}(#1)}

\newcommand*{\Scale}[2][4]{\scalebox{#1}{$#2$}}

\title[Resonance of plane partitions]{Resonance in orbits of plane partitions \\ and increasing tableaux}
\author{Kevin Dilks}
\address[KD]{Department of Mathematics \\ North Dakota State University \\ Fargo, ND 58102 \\ USA}
\email{kevin.dilks@ndsu.edu}

\author{Oliver Pechenik}
\address[OP]{Department of Mathematics \\ Rutgers University \\ Piscataway, NJ 08854 \\ USA}
\email{oliver.pechenik@rutgers.edu}

\author{Jessica Striker}
\address[JS]{Department of Mathematics \\ North Dakota State University \\ Fargo, ND 58102 \\ USA}
\email{jessica.striker@ndsu.edu}

\begin{document}

\begin{abstract}
We introduce a new concept of resonance on discrete dynamical systems. This concept formalizes the observation that, in various combinatorially-natural cyclic group actions, orbit cardinalities are all multiples of divisors of a fundamental frequency. 
Our main result is an equivariant bijection between plane partitions in a box (or order ideals in the product of three chains) under rowmotion and increasing tableaux under $K$-promotion. Both of these actions were observed to have orbit sizes that were small multiples of divisors of an expected orbit size, and we show this is an instance of resonance, as $K$-promotion cyclically rotates the set of labels appearing in the increasing tableaux. 
We extract a number of corollaries from this equivariant bijection, including 
a strengthening of a theorem of [P.~Cameron--D.~Fon-der-Flaass '95] and several new results on the order of $K$-promotion. Along the way, we adapt the proof of the conjugacy of promotion and rowmotion from [J.~Striker--N.~Williams '12] to give a generalization in the setting of $n$-dimensional lattice projections. Finally we discuss known and conjectured examples of resonance relating to alternating sign matrices and fully-packed loop configurations.
\end{abstract}

\maketitle

\tableofcontents

\section{Introduction}
\label{sec:intro}


We introduce the following concept of \emph{resonance}\footnote{The mathematically precise definition of resonance given here is new, though the phenomenon has been discussed by various people over the past several years, in particular, at the 2015 ``Dynamical Algebraic Combinatorics'' workshop at the American Institute of Mathematics where work on this paper began. Thanks to J.~Propp for coining the term ``resonance'' which so nicely encapsulates the idea.}.

\begin{definition}
Suppose $G = \langle g \rangle$ is a cyclic group acting on a set $X$, $\mathcal{C}_\omega = \langle c \rangle$ a cyclic group of order $\omega$ acting nontrivially on a set $Y$, and $f : X \to Y$ a surjection. We say the triple $(X, G, f)$ exhibits {\bf resonance with frequency $\omega$} if, for all $x \in X$, $c \cdot f(x) = f(g \cdot x)$, that is, the following diagram commutes:
\end{definition}

\begin{center}
\begin{tikzpicture}
\node (A) {$X$};
\node[right=2 of A] (B) {$X$};
\node[below =2 of A] (C) {$Y$};
\node[below right = 2 and 2 of A] (D) {$Y$};
\path (A) edge[pil] node[below]{$g \cdot$} (B);
\path (A) edge[pil] node[right]{$f$} (C);
\path (B) edge[pil] node[right]{$f$} (D);
\path (C) edge[pil] node[below]{$c \cdot$} (D);
\end{tikzpicture}
\end{center}


In our examples, $Y$ will be either a set of combinatorial objects drawn in the plane with $c$ acting by rotation 
or a set of words with $c$ acting by a cyclic shift.  Resonance is a \emph{pseudo-periodicity} property of the $G$-action, in that the resonant frequency $\omega$ is generally less than the order of the $G$-action.
Note that $(X, G, {\rm id}_X)$ satisfies the definition of resonance with frequency $|G|$; we call this an instance of \emph{trivial resonance}.
In general, if a system exhibits resonance with frequency $\omega$, then it also exhibits resonance with frequency any multiple of $\omega$; hence one is most interested in finding resonances with small $\omega$.

We think of the property of resonance as somewhat analogous to the \emph{cyclic sieving phenomenon} (introduced by V.~Reiner--D.~Stanton--D.~White \cite{reiner.stanton.white}, generalizing the $q=-1$ phenomenon of J.~Stembridge \cite{stembridge}) and the \emph{homomesy property} (isolated by J.~Propp--T.~Roby \cite{propp.roby}, inspired by observations of D.~Panyushev \cite{panyushev}) in being a somewhat subtle ``niceness'' property of a cyclic group action.  We suspect that the phenomenon of resonance, like those of cyclic sieving and homomesy, is significantly more common than previously realized. Heuristically, one is led to suspect the presence of resonance in a system by observing that many orbit cardinalities are multiples or divisors (or multiples of divisors) of $\omega$.


This paper centers around two new examples of resonance on \emph{increasing tableaux} under \emph{$K$-promotion} and \emph{plane partitions} under \emph{rowmotion}, as well as a new equivariant bijection relating these phenomena. Here we summarize our main results, the first in greater detail to clarify the definition of resonance. See the referenced sections for relevant definitions.

An {\bf increasing tableau} of partition shape $\lambda$ is a filling of $\lambda$ with positive integers such that labels strictly increase from left to right across rows and from top to bottom down columns. 
Denote as $\inc{\lambda}{q}$ the set of all increasing tableaux of shape $\lambda$ with entries at most $q$. 
Define the {\bf binary content} of an increasing tableau $T \in \inc{\lambda}{q}$ to be the sequence ${\rm Con}(T) =(a_1, a_2, \dots, a_q)$, where $a_i = 1$ if $i$ is an entry of $T$ and $a_i = 0$ if it is not.

$K$\textbf{-promotion}, which we define in Section~\ref{sec:incdef} and denote as $\kpro$, was first described by the second author \cite{pechenik}. It is an variant of M.-P.~Sch\"utzenberger's promotion operator \cite{schutzenberger} built on the $K$-\emph{jeu de taquin} that was introduced by H.~Thomas--A.~Yong \cite{thomas.yong:V} to study $K$-theoretic Schubert calculus. $K$-promotion has been further studied in \cite{pressey.stokke.visentin, rhoades, bloom.pechenik.saracino}. 


In Section 2.2, we prove the following, our first  result on resonance.
\begin{kpro_resonance_cor}
$(\inc{\lambda}{q}, \langle \kpro \rangle, {\rm Con})$ exhibits resonance with frequency~$q$.
\end{kpro_resonance_cor}

An example is shown in Figure~\ref{fig:resonant_tab}.

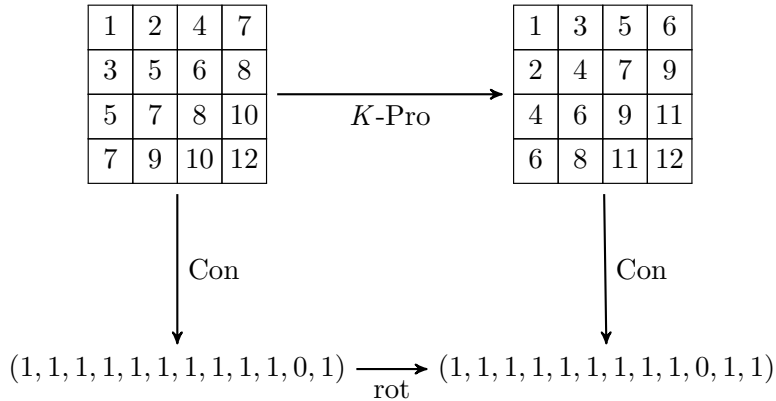
\begin{figure}[h]
\begin{center}
\begin{tikzpicture}
\node (A) {\ytableaushort{1247,3568,578{10},79{10}{12}}};
\node[right=3 of A] (B) {\ytableaushort{1356,2479,469{11},68{11}{12}}};
\node[below =2 of A] (C) {$(1,1,1,1,1,1,1,1,1,1,0,1)$};
\node[below right = 2 and 2 of A] (D) {$(1,1,1,1,1,1,1,1,1,0,1,1)$};
\path (A) edge[pil] node[below]{$\kpro$} (B);
\path (A) edge[pil] node[right]{${\rm Con}$} (C);
\path (B) edge[pil] node[right]{${\rm Con}$} (D);
\path (C) edge[pil] node[below]{${\rm rot}$} (D);
\end{tikzpicture}
\end{center}
\caption{An increasing tableau in $\inc{4 \times 4}{12}$ and its image under $\kpro$, along with the map to the binary content of each.}
\label{fig:resonant_tab}
\end{figure}

The $K$-promotion orbit of the depicted increasing tableaux in $\inc{4 \times 4}{12}$ has cardinality $36$. The binary content (written below each tableau) is, however, of order $12$ under cyclically shifting (denote this action as ${\rm rot}$). We will see in Lemma~\ref{prop:content_cycling} that the diagram of Figure~\ref{fig:resonant_tab} commutes. This illustrates the result that $(\inc{4\times 4}{12}, \langle \kpro \rangle, {\rm Con})$ exhibits resonance with frequency $12$, since while $\kpro^{12}(T)\neq T$ for either tableau in the figure, ${\rm rot}^{12}({\rm Con}(T))={\rm Con}(T)$ for all $T\in\inc{4 \times 4}{12}$.


\textbf{Rowmotion}, which we define in Section~\ref{subsec:row}, has attracted much attention since it was first studied (under another name) by A.~Brouwer--A.~Schrijver \cite{brouwer.schrijver} in 1974; see for example \cite{fonderflaass:by_himself, fonderflaass, panyushev, prorow, armstrong.stump.thomas, rush.shi, propp.roby, rush.wang}. More recently, several authors have studied a \emph{birational} lift of rowmotion \cite{einstein.propp, grinberg.roby:1, grinberg.roby:2}, with some relations to Zamolodchikov periodicity.

Let $J(\mathbf{a}\times\mathbf{b}\times\mathbf{c})$ denote the set of \textbf{plane partitions} inside an $a\times b\times c$ box and $\row$ denote rowmotion; see Section~\ref{sec:resonance_pp} for the definitions of $X_{\max}$ and~D. Our second main resonance result is the following.
\begin{row_resonance_cor}
$(J(\mathbf{a}\times\mathbf{b}\times\mathbf{c}), \langle \row \rangle, X_{\max}\circ D)$ exhibits resonance with frequency~$a+b+c-1$.
\end{row_resonance_cor}

To better study plane partitions, we introduce and develop the machinery of \emph{affine hyperplane toggles} and \emph{$n$-dimensional lattice projections}, including a higher-dimensional analogue of N.~Williams and the third author's result on the equivariance of (poset-)promotion and rowmotion \cite{prorow}. We obtain a large family of toggling actions $\{ \pro_{\pi,v}^{\sigma} \}$ whose orbit structures are equivalent to that of rowmotion. See Sections~\ref{subsec:3Dprorow} and~\ref{subsec:provpi} for further details.

\begin{3Dprorowthm}
Let $P$ be a finite poset with an $n$-dimensional lattice projection $\pi$. Let  $v=(v_1,v_2,v_3,\ldots, v_{n})$ and $w=(w_1,w_2,w_3,\ldots, w_{n})$, where $v_j, w_j\in\{\pm 1\}$. Finally suppose that $\sigma:\mathrm{supp}(P,\pi,v)\rightarrow\mathrm{supp}(P,\pi,v)$ and $\tau:\mathrm{supp}(P,\pi,w)\rightarrow\mathrm{supp}(P,\pi,w)$ are bijections.
Then there is an equivariant bijection
between $J(P)$ under $\pro_{\pi,v}^{\sigma}$ and $J(P)$ under $\pro_{\pi,w}^{\tau}$.
\end{3Dprorowthm}

This similarity of Theorems 2.2 and 3.10 leads us to establish an equivariant bijection between plane partitions under rowmotion and increasing tableaux under $K$-promotion.
\begin{equiv_biject}
$J({\bf a} \times {\bf b} \times {\bf c})$ under $\row$ is in equivariant bijection with $\inc{a \times b}{a+b+c-1}$ under $\kpro$.
\end{equiv_biject}
Part of our approach to establishing this equivariant bijection involves the reinterpretation of $K$-promotion in terms of $K$-Bender-Knuth involutions, which we introduce; see Proposition~\ref{prop:kbk}. 
We also extend, in Section~\ref{sec:descent_cycling}, a result of B.~Rhoades on \emph{descent cycling} to the $K$-promotion setting.

We obtain a variety of corollaries of this equivariant bijection. Many of these corollaries are new proofs of previously discovered results on the order of $\row$ and $\kpro$. We highlight here only those results that are new.

\begin{very_overworked}
The order of $\kpro$ on $\inc{a \times b}{a + b}$ is $a+b$.
\end{very_overworked}

\begin{somewhat_overworked}
The order of $\kpro$ on $\inc{a \times b}{a + b+1}$ is $a+b+1$.
\end{somewhat_overworked}

We also obtain the following  strengthening of a theorem of P.~Cameron--D.~Fon-der-Flaass \cite[Theorem~6(a)]{fonderflaass}. The original theorem had the more stringent hypothesis $c > ab -a - b  + 1$.

\begin{our_cfdf}
If $a+b+c-1$ is prime and $c > \frac{2ab-2}{3} - a - b +2$, then the cardinality of every orbit of $\row$ on $J({\bf a} \times {\bf b} \times {\bf c})$ is a multiple of $a+b+c-1$.
\end{our_cfdf}

\textbf{The rest of this paper is structured as follows.}
In Section~\ref{sec:increasing_tableaux}, we recall the $K$-promotion operator on increasing tableaux and establish a number of new properties (including resonance) that we will use. In Section~\ref{sec:toggle}, we establish resonance of plane partitions under rowmotion and extend machinery developed by N.~Williams and the third author \cite{prorow} to introduce the family of toggle group actions $\{ \pro_{\pi, v}^\sigma \}$ and show that each $\pro_{\pi, v}^\sigma$ acts with the same cycle structure as rowmotion. In Section~\ref{sec:pp_inctab}, we give an equivariant bijection between increasing tableaux under $K$-promotion and plane partitions under $\pro_{(1,1,-1)}$ and $\row$. We then extract a number of corollaries from this equivariant bijection, including new proofs of theorems of A.~Brouwer--A.~Schrijver \cite{brouwer.schrijver} and P.~Cameron--D.~Fon-der-Flaass \cite{fonderflaass}, a strengthening of a theorem of P.~Cameron--D.~Fon-der-Flaass \cite{fonderflaass}, and several new results on the order of $K$-promotion.
Finally, we conjecture the order of rowmotion on plane partitions of height $3$ (which we have shown to be also the order of $K$-promotion on certain classes of increasing tableaux). In Section~\ref{subsec:conj}, we give another example of resonance on \emph{fully-packed loop configurations} and propose  additional instances of resonance related to \emph{alternating sign matrices} and \emph{totally symmetric self-complementary plane partitions}.

\section{$K$-Promotion on increasing tableaux}
\label{sec:increasing_tableaux}
In this section, we study increasing tableaux, the first of the objects in our main bijection (Theorem~\ref{thm:mainbij}). After recalling the basic concepts, we establish resonance of increasing tableaux under $K$-promotion in Theorem~\ref{cor:resonance_of_increasing_tableaux}. In Section~\ref{sec:K-BK}, we reinterpret $K$-promotion in terms of $K$-Bender-Knuth involutions, which we introduce; this interpretation plays an important role in Section~\ref{sec:equivariant_bijection} in establishing equivariance of our main bijection. In Section~\ref{sec:descent_cycling}, we extend a \emph{descent cycling} result of B.~Rhoades \cite[Lemma~3.3]{rhoades:thesis} from standard Young tableaux to increasing tableaux; this extension is used in Theorem~\ref{thm:our_cfdf} to improve on a theorem of P.~Cameron--D.~Fon-der-Flaass \cite{fonderflaass}.

\subsection{Increasing tableaux}
\label{sec:incdef}
Identify a partition $\lambda$ with its Young diagram. (Throughout this paper, we use the English orientation on Young diagrams.) An {\bf increasing tableau} of shape $\lambda$ is a filling of $\lambda$ with positive integers such that labels strictly increase from left to right across rows and from top to bottom down columns. An example appears in Figure~\ref{fig:an_increasing_tableau}. We write $\inc{\lambda}{q}$ for the set of all increasing tableaux of shape $\lambda$ with all entries at most $q$. (In contrast to other definitions that have appeared in the literature, we do not assume here that every integer between $1$ and $q$ appears.)

\begin{figure}[h]
\ytableaushort{1458,2579,679{10},8{10}}
\caption{An increasing tableau $T$ of shape $\lambda = (4,4,4,2)$.}
\label{fig:an_increasing_tableau}
\end{figure}

Increasing tableaux have appeared in various contexts within algebraic combinatorics. Most notably for our purposes, H.~Thomas--A.~Yong introduced \cite{thomas.yong:V} a \emph{$K$-jeu de taquin} algorithm for increasing tableaux, which they applied to \emph{$K$-theoretic Schubert calculus}, obtaining Littlewood-Richardson rules for the Grothendieck rings of algebraic vector bundles over \emph{Grassmannians}. This algorithm has been has been extended to the $K$-theory of a wider variety of spaces by \cite{buch.ravikumar, clifford.thomas.yong, buch.samuel}, as well as to the \emph{torus-equivariant} $K$-theory of Grassmannians \cite{thomas.yong:H_T, pechenik.yong:long}.

In \cite{pechenik}, the second author studied a $K$-promotion operator, analogous to that of M.-P.~Sch\"{u}tzenberger for semistandard tableaux \cite{schutzenberger}, but using $K$-\emph{jeu de taquin} in place of ordinary \emph{jeu de taquin}. $K$-promotion has been further studied by J.~Bloom--D.~Saracino and the second author \cite{bloom.pechenik.saracino}, T.~Pressey--A.~Stokke--T.~Visentin \cite{pressey.stokke.visentin} and B.~Rhoades \cite{rhoades}.

{\bf $K$-promotion} is defined as follows. Let $T \in \inc{\lambda}{q}$. Delete all labels $1$ from $T$. (Note there is at most one such label.) Consider the set of boxes that are either empty or contain $2$. This set naturally decomposes into connected components that are {\bf short ribbons}, i.e.\ connected sets of boxes containing no $2 \times 2$ subshape and with each column and row of length at most $2$. For each such short ribbon containing more than one box, we delete each label $2$, while simultaneously placing $2$ in each empty box. We do not make any change to short ribbons consisting of a single box. Now consider the set of boxes that are either empty or contain $3$, and repeat the above process. Continue until all empty boxes are located at outer corners of $\lambda$. Finally, label those boxes $q+1$ and then subtract $1$ from each entry. The result is $\kpro(T) \in \inc{\lambda}{q}$ (see Figure~\ref{fig:Kpromotion}).

\begin{figure}[h]
\begin{tikzpicture}
\ytableausetup{boxsize=1.2em}
\node (A) {$T=\ytableaushort{1246,4567}$};
\node[right=1.5 of A] (B) {\ytableaushort{{*(Dandelion)\blank} {*(Dandelion)2}46, 4567}};
\node[right=1.5 of B] (C) {\ytableaushort{2 {*(Dandelion)\blank} 46, 4567}};
\node[below=1 of C] (D) {\ytableaushort{2 {*(Dandelion)\blank} {*(Dandelion) 4} 6, {*(Dandelion)4} 567}};
\node[left=1.5 of D] (E) {\ytableaushort{2 4 {*(Dandelion)\blank} 6, 4 {*(Dandelion)5} 67}};
\node[left=1.5 of E] (F) {\ytableaushort{2 4 {*(Dandelion)\blank} {*(Dandelion)6}, 4 5 {*(Dandelion)6} 7}};
\node[below=1 of F] (G) {\ytableaushort{2 4 6{*(Dandelion)\blank}, 4 5 {*(Dandelion) \blank} {*(Dandelion) 7}}};
\node[right=1.5 of G] (H) {\ytableaushort{2 4 6 7, 4 5 7 {*(Dandelion) \blank} }};
\node[right=1.5 of H] (I) {$\ytableaushort{1 3 5 6, 3 4 6 7 }=\kpro(T)$};
\path (A) edge[pil] node[above]{Delete $1$'s} (B);
\path (B) edge[pil] (C);
\path (C) edge[pil] (D);
\path (D) edge[pil] (E);
\path (E) edge[pil] (F);
\path (F) edge[pil] (G);
\path (G) edge[pil] (H);
\path (H) edge[pil] node[above]{Fill and} node[below]{decrement} (I);
\end{tikzpicture}
\caption{Calculating the $K$-promotion of $T \in \inc{2\times 4}{7}$. In each intermediate step, we have colored the short ribbons on which we are about to act.}
\label{fig:Kpromotion}
\end{figure}
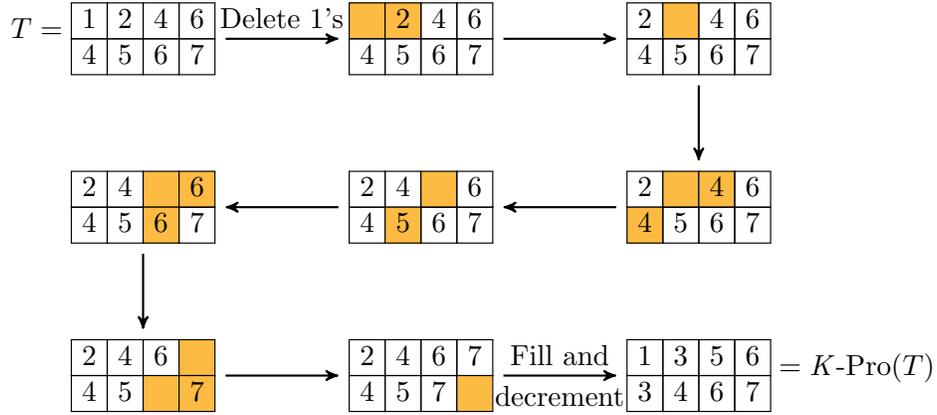

\subsection{Binary content cycling}
Define the {\bf binary content} of an increasing tableau $T \in \inc{\lambda}{q}$ to be the sequence ${\rm Con}(T) =(a_1, a_2, \dots, a_q)$, where $a_i = 1$ if $i$ is an entry of $T$ and $a_i = 0$ if it is not. That is, $a_i := \chi_i(T)$ where $\chi_i$ denotes the indicator function for the label $i$.

\begin{lemma}\label{prop:content_cycling}
Let $T \in \inc{\lambda}{q}$. If ${\rm Con}(T)=(a_1, a_2, \dots, a_q)$, then \linebreak ${\rm Con}(\kpro(T))$ is the cyclic shift $(a_2, \dots, a_q, a_1)$.
\end{lemma}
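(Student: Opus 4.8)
The plan is to analyze carefully what happens to the set of labels appearing in $T$ as we run through the $K$-promotion algorithm. First I would observe that deleting the (at most one) label $1$ from $T$ removes $1$ from the content but that the subsequent "short ribbon" moves do not change the set of labels present: each move either does nothing (for a singleton ribbon) or slides the $i$'s around within a fixed connected region that is already occupied by a mixture of empty cells and $i$'s, and since a short ribbon with more than one box contains at least one cell labeled $i$ and at least one empty cell, after the move it still contains at least one cell labeled $i$ and at least one empty cell. Hence the presence/absence of each label $i$ with $2 \le i \le q$ is unchanged throughout the sliding phase; the only label that disappears is $1$. The reason I must be a bit careful here is the possibility that some label $i$ is present in $T$ but, after the $1$ is deleted and earlier ribbons are processed, an empty cell migrates to form a ribbon containing all the $i$'s — I would argue this cannot eliminate label $i$ because such a ribbon has strictly more boxes than the number of $i$'s (it contains at least one empty box), so after toggling there is still at least one $i$.

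Next I would track the "fill and decrement" step. After all the sliding, the empty cells sit at outer corners of $\lambda$; there is exactly one such empty cell if and only if $1$ was originally an entry of $T$ (that is, iff $a_1 = 1$), because the $K$-promotion algorithm preserves the number of empty cells, which started at $a_1 \in \{0,1\}$. Wait — I should phrase this more carefully: the sliding moves preserve the number of empty cells, and we began with exactly $a_1$ empty cells (one if $1 \in T$, none otherwise). If $a_1 = 1$, we fill that single empty corner with $q+1$, so before decrementing the label set is $\{i : a_i = 1, i \ge 2\} \cup \{q+1\}$; after subtracting $1$ it becomes $\{i - 1 : a_i = 1, i \ge 2\} \cup \{q\}$, whose indicator vector is exactly $(a_2, \dots, a_q, a_1) = (a_2, \dots, a_q, 1)$. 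If $a_1 = 0$, there is nothing to fill (no empty cells remain), so the label set before decrementing is $\{i : a_i = 1, i \ge 2\}$ and after decrementing it is $\{i-1 : a_i = 1, i \ge 2\}$, with indicator vector $(a_2, \dots, a_q, 0) = (a_2, \dots, a_q, a_1)$. In both cases we get the claimed cyclic shift.

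The only genuine obstacle is the invariance claim during the sliding phase, and specifically confirming that the label $q$ (equivalently $q+1$ in the unshifted picture) is handled correctly: since entries are at most $q$, the short ribbons for label $q$ consist of cells that are empty or contain $q$, and processing them can only move $q$'s into empty cells or vice versa within a fixed region — again no $q$ is created or destroyed except via the final fill. I would also note as a sanity check that $\kpro(T)$ is a well-defined increasing tableau in $\inc{\lambda}{q}$, which is already established in Section~\ref{sec:incdef}, so I do not need to reprove well-definedness; I only need the bookkeeping of which integers appear. Assembling the three observations — label $1$ is deleted, labels $2, \dots, q$ persist through sliding with their positions merely rearranged, and the fill-and-decrement step shifts every surviving label down by one while reintroducing label $q$ exactly when $a_1 = 1$ — gives ${\rm Con}(\kpro(T)) = (a_2, \dots, a_q, a_1)$.
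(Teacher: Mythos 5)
Your overall strategy is the same as the paper's: delete the label $1$, argue that the ribbon-switching phase preserves which of the labels $2,\dots,q$ are present, and then read off the effect of the fill-and-decrement step by splitting into the cases $a_1=0$ and $a_1=1$. The final bookkeeping is right. However, one load-bearing assertion in your second paragraph is false: the sliding moves do \emph{not} preserve the number of empty cells. A nontrivial short ribbon with $k$ empty boxes and $m$ boxes labelled $i$ becomes, after the switch, a ribbon with $m$ empty boxes and $k$ boxes labelled $i$, and $k\neq m$ is entirely possible. Concretely, take $T\in\inc{(2,1)}{2}$ with first row $1,2$ and second row $2$. After deleting the $1$, the three boxes form a single short ribbon with one empty box and two $2$'s; the switch produces two empty boxes and one $2$, and \emph{both} empty boxes (each an outer corner of $(2,1)$) are then filled with $q+1=3$. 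So the claims ``the $K$-promotion algorithm preserves the number of empty cells'' and ``we fill that single empty corner'' are both wrong. What is true, and all that is needed, is that switching preserves the \emph{nonzeroness} of the number of empty boxes: a nontrivial ribbon containing an empty box also contains an $i$ and vice versa, so the swapped ribbon still has at least one empty box. This is exactly the formulation the paper uses (``may change the number of empty boxes, but clearly preserves its nonzeroness''). Since every empty box remaining at the end is filled with $q+1$, your computation of the label set before decrementing, and hence the conclusion, survives once this justification is corrected.

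A smaller point: when the label $i$ is \emph{absent} from the tableau, your argument that its absence is preserved tacitly assumes that every connected component of empty boxes at that stage is a singleton; otherwise the switching rule, applied to a multi-box all-empty component, would \emph{create} labels $i$ where none existed. Your phrase ``a fixed connected region that is already occupied by a mixture of empty cells and $i$'s'' quietly excludes the all-empty case without justification. The paper states this step explicitly (``each is a single empty box and by definition we make no change''), and you should as well.
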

\begin{proof}
{\sf Case 1: ($\chi_1(T) := a_1 = 0$):} Then $T$ has no labels $1$. Hence the first step of $K$-promotion is trivial, deleting no labels. The ribbon switching process is also trivial, as there are no empty boxes. Therefore, at the final step, there are no boxes to fill. Thus, in this case, the total effect of $K$-promotion is merely to subtract $1$ from each entry. The lemma is then immediate in this case.

{\sf Case 2: ($\chi_1(T) := a_1 = 1$):} Then the first step of $K$-promotion is to delete a nonempty collection of labels $1$. Hence there are a nonzero number of empty boxes. The ribbon switching process may change the number of empty boxes, but clearly preserves its nonzeroness. Hence in the final step of $K$-promotion, there will be a nonzero number of boxes filled with $q+1$ and then decremented by $1$. Hence $\chi_q(\kpro(T)) = 1$.

Let $i>1$ and suppose $\chi_i(T) = 1$. Then $i$ appears as an entry of $T$. The ribbon switching process preserves this property (though not in general the number of entries $i$). Hence after subtracting one from each entry, this yields $\chi_{i-1}(\kpro(T)) = 1$. If instead $\chi_i(T) = 0$, then $i$ does not appear in $T$. Hence during the ribbon switching process, when we consider the ribbons consisting of $i$'s and empty boxes, each is a single empty box and by definition we make no change. Hence the ribbon switching process preserves the absence of $i$. After decrementing, this yields $\chi_{i-1}(\kpro(T)) = 0$.
\end{proof}

The following instance of resonance follows directly from Lemma~\ref{prop:content_cycling}.
\begin{theorem}
\label{cor:resonance_of_increasing_tableaux}
$(\inc{\lambda}{q}, \langle \kpro \rangle, {\rm Con})$ exhibits resonance with frequency~$q$.
\end{theorem}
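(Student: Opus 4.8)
The plan is to derive this theorem directly from Lemma~\ref{prop:content_cycling}, which is the substantive content; the remaining work is simply checking that the definition of resonance is met. Concretely, I take $G = \langle \kpro \rangle$ acting on $X = \inc{\lambda}{q}$, take $Y$ to be the set of binary words of length $q$ that arise as ${\rm Con}(T)$ for some $T \in \inc{\lambda}{q}$, and let $c$ be the cyclic left-shift ${\rm rot}$ sending $(a_1, a_2, \dots, a_q)$ to $(a_2, \dots, a_q, a_1)$, with $\mathcal{C}_\omega = \langle {\rm rot} \rangle$ where $\omega = q$. The map $f$ is ${\rm Con}$, which is a surjection onto $Y$ by construction of $Y$.

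There are three small things to verify. First, ${\rm rot}$ genuinely restricts to an action on $Y$: since $\kpro$ is a well-defined map $\inc{\lambda}{q} \to \inc{\lambda}{q}$, Lemma~\ref{prop:content_cycling} shows that whenever $w = {\rm Con}(T) \in Y$ we have ${\rm rot}(w) = {\rm Con}(\kpro(T)) \in Y$, so ${\rm rot}$ preserves $Y$; it is a bijection on $Y$ because it is a bijection on the ambient set of all binary words of length $q$. Second, $\langle {\rm rot} \rangle$ has order exactly $q$ as an abstract cyclic group (this holds since ${\rm rot}^q = {\rm id}$ and no smaller power is the identity on all length-$q$ binary words), and it acts nontrivially on $Y$ provided $Y$ contains at least one non-constant word — which is true for any nonempty $\lambda$, since e.g. a tableau using the labels $1, 2, \dots$ consecutively has binary content not equal to its rotation, and in the degenerate case one simply notes $\lambda$ is assumed to be an honest partition shape. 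Third, the commuting square: for every $T \in \inc{\lambda}{q}$ we need ${\rm rot}({\rm Con}(T)) = {\rm Con}(\kpro(T))$, which is exactly the statement of Lemma~\ref{prop:content_cycling}.

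Putting these together, the triple $(\inc{\lambda}{q}, \langle \kpro \rangle, {\rm Con})$ satisfies every clause of the definition of resonance with frequency $q$, so the theorem follows. I do not anticipate a genuine obstacle here; the only point requiring a moment's care is confirming that ${\rm rot}$ acts \emph{nontrivially} on the image set $Y$ (the definition of resonance requires this), rather than taking for granted that the full shift on all binary words is nontrivial — but as noted this is immediate once $\lambda$ is a nonempty partition shape. If one wanted to be maximally safe one could even take $Y$ to be the set of \emph{all} length-$q$ binary words and note that ${\rm Con}$ need not be surjective onto it; but the cleaner formulation is to let $Y$ be the actual image, which is automatically ${\rm rot}$-stable by the argument above.
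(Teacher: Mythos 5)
Your proposal is correct and follows the same route as the paper, which simply states that the theorem ``follows directly from Lemma~\ref{prop:content_cycling}''; your additional checks (that ${\rm rot}$ stabilizes the image of ${\rm Con}$, acts nontrivially on it, and that the square commutes) are exactly the routine verifications the paper leaves implicit.
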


Together with the following fact, this leads to a useful corollary.

\begin{fact}\label{fact:referee}
If $p$ is prime and $(X,G,f)$ exhibits resonance with frequency $p$, then for any $x \in X$ with $c \cdot f(x) \neq f(x)$, the $G$-orbit of $x$ has cardinality a multiple of $p$.
\end{fact}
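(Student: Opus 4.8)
The statement to prove is Fact~\ref{fact:referee}: if $p$ is prime, $(X,G,f)$ exhibits resonance with frequency $p$, and $x \in X$ satisfies $c \cdot f(x) \neq f(x)$, then $|G \cdot x|$ is a multiple of $p$. The plan is to exploit the commutativity of the resonance diagram to transfer periodicity information from the $G$-orbit of $x$ to the $\mathcal{C}_p$-orbit of $f(x)$, and then use primality of $p$ to pin down that orbit's size.

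First I would observe that, by iterating the resonance relation $c \cdot f(y) = f(g \cdot y)$, one gets $c^k \cdot f(x) = f(g^k \cdot x)$ for all $k \geq 0$. Let $m = |G \cdot x|$, so that $g^m \cdot x = x$ (here I use that $g$ generates $G$ and $G$ acts on $X$, so the orbit size equals the order of $g$ restricted to the orbit, and $g^m$ fixes $x$). Applying $f$ gives $c^m \cdot f(x) = f(g^m \cdot x) = f(x)$. Thus $c^m$ fixes $f(x)$. Next I would bring in the hypothesis $c \cdot f(x) \neq f(x)$, which says $c$ does \emph{not} fix $f(x)$; equivalently, the stabilizer of $f(x)$ in $\mathcal{C}_p = \langle c \rangle$ is a proper subgroup. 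Since $|\mathcal{C}_p| = p$ is prime, its only proper subgroup is trivial, so the stabilizer of $f(x)$ is $\{e\}$, i.e.\ $c^j \cdot f(x) = f(x)$ forces $p \mid j$. Applying this with $j = m$ yields $p \mid m$, which is exactly the claim.

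The only subtlety, and the one step warranting care, is the justification that $g^m \cdot x = x$ where $m = |G \cdot x|$: a priori the orbit-stabilizer count gives $m = [G : \mathrm{Stab}_G(x)]$, and one needs $G$ cyclic to conclude $g^m \in \mathrm{Stab}_G(x)$. This follows because in a cyclic group $G = \langle g \rangle$ every subgroup is generated by $g^d$ for $d \mid |G|$ (or, when $G$ is infinite, $G \cong \mathbb{Z}$ and subgroups are $m\mathbb{Z}$), so $\mathrm{Stab}_G(x) = \langle g^m \rangle$ and in particular $g^m \cdot x = x$; if $G$ is infinite one simply takes $m$ to be the least positive integer with $g^m \cdot x = x$, which exists precisely because $G \cdot x$ is finite (it is contained in the finite set $X$ in all our applications, or one restricts attention to the case of finite orbits, which is all that is needed). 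I do not anticipate any real obstacle here — this is a short group-theoretic argument — but it is worth stating cleanly since the whole point of Fact~\ref{fact:referee} is to be invoked repeatedly later (e.g.\ in Theorem~\ref{thm:our_cfdf}).
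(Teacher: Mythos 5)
Your proof is correct: iterating the resonance relation to get $c^m \cdot f(x) = f(x)$ for $m$ the orbit cardinality, and then using primality of $p$ to force the stabilizer of $f(x)$ in $\mathcal{C}_p$ to be trivial, is exactly the intended argument. The paper states this as a Fact with no proof supplied, so your write-up simply fills in the omitted (routine) justification, including the correct handling of the small point that $g^m$ fixes $x$ when $m = |G \cdot x|$ and $G$ is cyclic.
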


\begin{corollary}
\label{cor:weakest_CFdFlike_statement}
Suppose $q$ is prime and $T \in \inc{\lambda}{q}$ does not have full binary content. Then the size of the $K$-promotion orbit of $T$ is a multiple of $q$.
\end{corollary}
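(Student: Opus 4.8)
The plan is to deduce Corollary~\ref{cor:weakest_CFdFlike_statement} directly by combining Theorem~\ref{cor:resonance_of_increasing_tableaux} with Fact~\ref{fact:referee}. First I would observe that Theorem~\ref{cor:resonance_of_increasing_tableaux} tells us $(\inc{\lambda}{q}, \langle \kpro \rangle, {\rm Con})$ exhibits resonance with frequency $q$, so we are precisely in the setting of Fact~\ref{fact:referee} with $X = \inc{\lambda}{q}$, $G = \langle \kpro \rangle$, $f = {\rm Con}$, $p = q$ prime, and $c = {\rm rot}$ (the cyclic shift of binary content strings of length $q$, which indeed generates a cyclic group of order $q$ acting nontrivially).

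The one thing that needs checking is the hypothesis of Fact~\ref{fact:referee}: that ${\rm rot} \cdot {\rm Con}(T) \neq {\rm Con}(T)$. Here is where the assumption that $T$ does not have full binary content comes in. If ${\rm Con}(T) = (a_1, \dots, a_q)$ were fixed by a single cyclic shift, then $a_1 = a_2 = \dots = a_q$, so the content string is either all $1$'s or all $0$'s. The all-$0$'s case is impossible since $T$ is a nonempty tableau of shape $\lambda$ (so at least one label occurs; strictly speaking one should note $\lambda \neq \varnothing$, or else the statement is vacuous), so ${\rm Con}(T)$ being shift-invariant forces ${\rm Con}(T) = (1, 1, \dots, 1)$, i.e.\ $T$ has full binary content — contradicting the hypothesis. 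Hence ${\rm rot} \cdot {\rm Con}(T) \neq {\rm Con}(T)$.

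Applying Fact~\ref{fact:referee} then yields immediately that the $\langle \kpro \rangle$-orbit of $T$ has cardinality a multiple of $q$, which is the claim.

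I do not anticipate any genuine obstacle here: the corollary is essentially a formal consequence of the resonance theorem and the prime-order fact, and the only content is the elementary remark that a binary string of prime length $q$ is shift-invariant if and only if it is constant, together with ruling out the all-zero string. One could alternatively give a self-contained argument bypassing Fact~\ref{fact:referee}: by Lemma~\ref{prop:content_cycling}, $\kpro^k(T)$ has binary content ${\rm rot}^k({\rm Con}(T))$, so the orbit of $T$ projects onto the ${\rm rot}$-orbit of ${\rm Con}(T)$, whose size divides the orbit size of $T$; since ${\rm Con}(T)$ is a non-constant binary string of prime length $q$, its ${\rm rot}$-orbit has size exactly $q$, forcing $q \mid |\mathrm{Orb}(T)|$. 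But since Fact~\ref{fact:referee} is already available, invoking it is cleanest.
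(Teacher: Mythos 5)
Your proof is correct and takes exactly the same route as the paper, which simply cites Theorem~\ref{cor:resonance_of_increasing_tableaux} and Fact~\ref{fact:referee}. You have additionally spelled out the implicit verification that ${\rm rot}\cdot{\rm Con}(T)\neq{\rm Con}(T)$ (a shift-invariant binary string is constant, and the non-full-content hypothesis plus $\lambda\neq\varnothing$ rules out both constants), which the paper leaves to the reader.
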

\begin{proof}
By Theorem~\ref{cor:resonance_of_increasing_tableaux} and Fact~\ref{fact:referee}.
\end{proof}

\subsection{$K$-Bender-Knuth involutions}
\label{sec:K-BK}

In this subsection, we reinterpret $K$-promotion as a product of involutions, which we will need in our proof of Theorem~\ref{cor:rowinc}.
We define operators $\kbk_i$ on $\inc{\lambda}{q}$ for each $1 \leq i \leq q$. Take $T \in \inc{\lambda}{q}$. We compute $\kbk_i(T)$ as follows: Consider the set of boxes in $T$ that contain either $i$ or $i+1$. This set decomposes into connected components that are short ribbons. On each nontrivial such component, we do nothing. On each component that is a single box, replace the symbol $i$ by $i+1$ or \emph{vice versa}. The result is $\kbk_i(T)$. That is, the action of $\kbk_i$ on $T$ is to increment $i$ and/or decrement $i+1$, \emph{wherever possible}. These operators are illustrated in Figure~\ref{fig:KBenderKnuth}.

Clearly each $\kbk_i$ is an involution. We call it the $i$th {\bf $K$-Bender-Knuth involution} because in the case $T$ is standard, $\kbk_i$ coincides with the classical involution introduced by E.~Bender--D.~Knuth \cite{bender.knuth}.

\begin{figure}[ht]
\begin{tikzpicture}
\node (A) {\ytableaushort{1{*(Dandelion) 4}5{*(SkyBlue) 8},257{*(SkyBlue) 9},67{*(SkyBlue) 9}{10},{*(SkyBlue) 8}{10}}};
\node[below left = 2 and 2 of A] (B) {\ytableaushort{1{*(Dandelion) 3}58,2579,679{10},8{10}}};
\node[below right = 2 and 2 of A] (C) {\ytableaushort{145{*(SkyBlue) 8},257{*(SkyBlue) 9},67{*(SkyBlue) 8}{10},{*(SkyBlue) 9}{10}}};
\path (A) edge[pil, color=Dandelion] node[left]{$\kbk_3$} (B);
\path (A) edge[pil, color=SkyBlue] node[right]{$\kbk_8$} (C);
\end{tikzpicture}
\caption{The action of some $K$-Bender-Knuth involutions on the tableau $T$ from Figure~\ref{fig:an_increasing_tableau}.}
\label{fig:KBenderKnuth}
\end{figure}
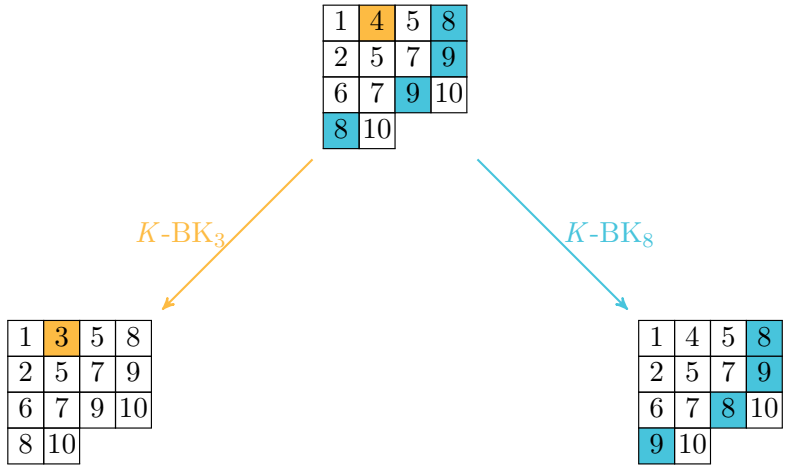

\begin{proposition}
\label{prop:kbk}
For $T \in \inc{\lambda}{q}$, $\kpro(T) = \kbk_{q-1} \circ \dots \circ \kbk_1 (T)$.
\end{proposition}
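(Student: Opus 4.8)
The plan is to compare the two procedures step by step, showing that $K$-promotion can be reorganized as a left-to-right sweep of single-box moves that agrees with $\kbk_{q-1} \circ \cdots \circ \kbk_1$. The essential observation is that the ``ribbon switching'' step in $K$-promotion that acts on the boxes containing $i$ together with the current empty boxes has the same local effect as $\kbk_{i}$ acting on boxes containing $i$ and $i+1$, once we track where the empty boxes sit. More precisely, I would first argue that after deleting the $1$'s and performing the ribbon switches for $2, 3, \dots, i$, the set of empty boxes coincides with the set of boxes that, in the tableau $\kbk_{i-1}\circ \cdots \circ \kbk_1(T)$ with all $i$'s also deleted, would be occupied by the label $i$ in $T$ but not in $\kbk_{i-1}\circ\cdots\circ\kbk_1(T)$ — i.e., the empties are exactly the ``vacated'' positions. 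This is the invariant I would set up and maintain by induction on $i$.

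The induction would run as follows. For the base case $i=1$: deleting the $1$ from $T$ leaves an empty box exactly where $T$ had its $1$, which is the box that $\kbk_1$ would have to decrement from $2$ to $1$ (if a non-trivial ribbon forces otherwise, there is no $1$ at all and both procedures are trivial at this stage). For the inductive step, suppose after processing through $i-1$ the empty boxes are in the claimed positions. Now consider the connected components of (empty boxes) $\cup$ (boxes containing $i$). A component that is a single box is either an isolated $i$ — which stays put in both procedures — or a single empty box, which also stays put in both (by the ``do nothing on single boxes'' clause, matched on the $K$-promotion side). A component that is a non-trivial short ribbon alternates between empties and $i$'s; $K$-promotion swaps them, sending each $i$ to an empty and each empty to an $i$. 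On the $\kbk$ side, I need to check that $\kbk_i$ applied to the partially-processed tableau performs exactly the matching operation: the boxes with $i$ that are adjacent (in a short ribbon) to an $i+1$-or-empty are the ones that get incremented, and this adjacency pattern is precisely what makes the $K$-promotion ribbon non-trivial. The key compatibility point is that both operations act only on \emph{single}-box components and leave \emph{multi}-box components fixed, and that the notion of ``component'' (short ribbon of the relevant two-value set) is the same in both settings because the empty boxes occupy the right positions by the inductive hypothesis. One must also verify that the values $i$ in the partially-processed tableau are exactly the values $i$ of $T$ in the positions not yet vacated, which follows because the processing of stages $2, \dots, i-1$ never touched any box that had value $i$ in $T$ (ribbon switching at stage $j < i$ only moves $j$'s and empties).

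After the induction terminates — when all empty boxes have been pushed to outer corners of $\lambda$ — the remaining step of $K$-promotion is to fill those corner boxes with $q+1$ and decrement everything by $1$. I would match this with the composition: by the invariant at $i = q$, the empty boxes after all ribbon switches are exactly the boxes that $\kbk_{q-1}\circ\cdots\circ\kbk_1(T)$ would need a ``$q+1$'' in; and since filling with $q+1$ followed by a global $-1$ shift is exactly the bookkeeping that converts the ``increment $i$ through the sweep'' picture into the standard promotion normalization, the two tableaux agree. A clean way to phrase the endgame is to note that $\kbk_{q-1}\circ\cdots\circ\kbk_1$ is itself the ``forward jeu-de-taquin slide'' reinterpretation: each $\kbk_i$ moves the vacancy (conceptually) one value-class further, and composing all of them pushes the vacancy out, which is precisely promotion.

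\textbf{Main obstacle.} The hard part will be pinning down the claim that ``a short-ribbon component is non-trivial for the $K$-promotion ribbon step if and only if the corresponding single-box moves of $\kbk_i$ fire,'' i.e., that the two procedures partition the $i$-boxes into ``moves'' and ``stays'' in exactly the same way. One has to be careful that $K$-promotion looks at (empty) $\cup$ ($i$-boxes) while $\kbk_i$ looks at ($i$-boxes) $\cup$ ($i{+}1$-boxes), and reconcile these via the inductive description of where the empty boxes are. I expect the subtlety to concentrate in the case analysis of short ribbons where $i$'s and $i{+}1$'s (or empties) interleave in an L-shape versus a straight line, and in confirming that the ``no $2\times 2$'' and ``rows/columns of length $\le 2$'' conditions defining short ribbons are respected identically on both sides. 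An induction on the number of the current value being processed, together with a careful diagram of the local moves (as in Figures~\ref{fig:Kpromotion} and~\ref{fig:KBenderKnuth}), should make this manageable.
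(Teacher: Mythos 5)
Your high-level plan---induct on the label being processed, track the empty boxes, and match each $K$-promotion ribbon round against the corresponding $\kbk_i$---is viable and is essentially the paper's strategy, but the step you single out as the ``key compatibility point'' is stated backwards, and the case analysis would fail if carried out as written. The label-$(i{+}1)$ round of $K$-promotion acts nontrivially exactly on the \emph{multi}-box components of (empties) $\cup$ ($(i{+}1)$'s) and fixes the single-box ones, whereas $\kbk_i$ acts exactly on the \emph{single}-box components of ($i$'s) $\cup$ ($(i{+}1)$'s) and fixes the nontrivial ones. So your claim that ``both operations act only on single-box components and leave multi-box components fixed'' is false on the $K$-promotion side, and ``the boxes with $i$ that are adjacent to an $i{+}1$-or-empty are the ones that get incremented'' is the opposite of the definition of $\kbk_i$ (an $i$ adjacent to an $i{+}1$ lies in a nontrivial component, where $\kbk_i$ does nothing). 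Your invariant is also not the right one: already for $T$ as in Figure~\ref{fig:Kpromotion}, after the round with the $2$'s the unique empty box sits at the cell that contains $2$ in both $T$ and $\kbk_1(T)$, so your set of cells ``occupied by the label $i$ in $T$ but not in $\kbk_{i-1}\circ\cdots\circ\kbk_1(T)$'' is empty while the set of empty boxes is not. The invariant you want is: the intermediate $K$-promotion tableau after the round with the $(i{+}1)$'s becomes $\kbk_i\circ\cdots\circ\kbk_1(T)$ upon decrementing every label $j$ with $2\le j\le i{+}1$ and writing $i{+}1$ in every empty box; in particular, the empty boxes are exactly the cells carrying $i{+}1$ in the partial composition.

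The missing idea that reconciles the polarity mismatch---and which is the entire content of the paper's short proof---is to view $\kbk_i$ as the $K$-infusion of the labels $i$ through the labels $i{+}1$: treat each $i$ as an empty box, perform the ribbon swaps of empties and $(i{+}1)$'s exactly as in the $K$-promotion round, and then relabel each $i{+}1$ as $i$ and each empty box as $i{+}1$. On a nontrivial component the swap followed by this relabeling is the identity, matching $\kbk_i$'s ``do nothing''; on a single-box component the swap does nothing but the relabeling toggles $i\leftrightarrow i{+}1$, matching $\kbk_i$'s move. With that observation your induction closes: the composition $\kbk_{q-1}\circ\cdots\circ\kbk_1$ is $K$-promotion with the final global decrement distributed through the sweep, which is exactly the bookkeeping you anticipated in your endgame paragraph. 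Without it, the component-by-component matching you describe does not go through.
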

\begin{proof}
Another way to think of $\kbk_i$ is as the \emph{$K$-infusion} \cite[Section~3]{thomas.yong:V} of the labels $i$ through the labels $i+1$. That is, treat the labels $i$ as empty boxes and swap the short ribbons of empty boxes and $(i+1)$'s as in the definition of $K$-promotion; then relabel each $i+1$ as $i$ and each empty box as $i+1$.

From this characterization, it is clear that $\kbk_{q-1} \circ \dots \circ \kbk_1$ amounts to deleting the $1$'s and swapping the empty boxes successively through each other label in order, decrementing each other label as the empty boxes swap through it, and finally labeling the empty boxes at outer corners by $q$. This is transparently the same as $K$-promotion, except that the decrementing of labels happens throughout the process instead of all at the end.
\end{proof}

\subsection{Descent cycling}
\label{sec:descent_cycling}

In this subsection, we restrict consideration to increasing tableaux of rectangular shape. We extend a result of B.~Rhoades \cite[Lemma 3.3]{rhoades:thesis} from standard Young tableaux to increasing tableaux. Our proof is a elaboration of his argument. We will use this result in Theorem~\ref{thm:our_cfdf} to improve on a theorem of P.~Cameron--D.~Fon-der-Flaass \cite[Theorem~6(a)]{fonderflaass}.  Throughout this section, we write ``East'', ``east'' and ``southEast'' to mean ``strictly east'', ``weakly east'' and
``weakly south and strictly east'' respectively, etc.

\begin{definition}
Let $T \in \inc{a \times b}{q}$. For $1 \leq i < q$, the symbol $i$ is a {\bf descent} of $T$ if some instance of $i$ appears in a higher row than some instance of $i+1$.
Additionally, $q$ is a {\bf descent} of $T$ if $q-1$ is a descent of $\kpro(T)$.
\end{definition}

\begin{lemma}\label{lem:descents}
Suppose $i$ is a descent of $T \in \inc{a \times b}{q}$. Then $i-1 \mod q$ is a descent of $\kpro(T)$.
\end{lemma}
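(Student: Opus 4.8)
The plan is to split into the generic case $1 \le i < q-1$ (so that both $i$ and $i-1$ are ordinary descents in terms of relative row positions) and the two boundary cases $i = q-1$ and $i = q$, which interact with the definition of a descent at $q$ via $\kpro$. For the generic case, I would use Proposition~\ref{prop:kbk} to write $\kpro = \kbk_{q-1} \circ \cdots \circ \kbk_1$ and track what happens to the relative vertical positions of the labels $i$ and $i-1$ (hence $i+1$ and $i$ after decrementing). The key observation, following Rhoades, is that $\kbk_j$ for $j \notin \{i-1, i, i+1\}$ only moves labels $j, j+1$ and cannot affect whether some copy of $i$ lies in a strictly higher row than some copy of $i+1$; so the only potentially dangerous involutions are $\kbk_{i-1}, \kbk_i, \kbk_{i+1}$, and I would analyze the composite effect of just these three (applied in that order within the full product) on the mutual positions of the labels $i-1, i, i+1, i+2$. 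After the global decrement by $1$, a descent at $i$ in $T$ must become a descent at $i-1$ in $\kpro(T)$; the heart is a local argument, on the short ribbons involved, showing that the $K$-Bender–Knuth moves cannot destroy the ``some $i$ strictly above some $i+1$'' witness when it is shifted down to ``some $i-1$ strictly above some $i$''.

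For the concrete local analysis I would argue as follows. Suppose $i$ is a descent of $T$, so there is a box $x$ containing $i$ and a box $y$ containing $i+1$ with $\mathrm{row}(x) < \mathrm{row}(y)$. Since the shape is the rectangle $a\times b$, I can choose $x$ to be the $i$ in the highest row and $y$ the $i+1$ in the lowest row; by the increasing condition and rectangularity, $x$ is strictly NW or directly N or directly W of $y$ in a controlled way. I then trace $x$ and $y$ through the $K$-infusion description of $\kbk_{q-1}\circ\cdots\circ\kbk_1$: the crucial steps are when the "hole" (the travelling empty boxes, originally the $1$'s) passes through the labels $i-1$, then $i$, then $i+1$. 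I want to show that after these passes, there remain a box containing $i-1$ and a box containing $i$ (in $\kpro(T)$, i.e.\ after the final relabelling and decrement) with the former in a strictly higher row than the latter. The short-ribbon rule — nontrivial ribbons are swapped wholesale, singleton boxes just toggle — is exactly what guarantees that a vertically-separated pair of witnesses is preserved, because a singleton toggle $i\leftrightarrow i+1$ can only occur at a box whose neighbours force it to be isolated, and one checks such a toggle cannot simultaneously eliminate all witnessing pairs given that we started with an extremal (highest/lowest) pair. This case-check on a handful of local ribbon configurations is, I expect, the main obstacle; it is the part of Rhoades's SYT argument that needs genuine re-examination in the $K$-theoretic setting, since the presence of repeated labels and nontrivial ribbons creates configurations absent for standard tableaux.

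For the boundary cases I would argue more formally. When $i = q$: by definition $q$ being a descent of $T$ means $q-1$ is a descent of $\kpro(T)$; applying the generic case (which covers $1\le q-1<q-1$ only if $q\ge 3$, so for $q-1$ I instead want the statement with $i$ replaced by $q-1$) gives that $q-2 = (q-1)-1$ is a descent of $\kpro(\kpro(T)) = \kpro^2(T)$, and I must massage this into the claim that $q-1 \bmod q = q-1$ is a descent of $\kpro(T)$ — wait, here I simply restate: $i-1\bmod q$ with $i=q$ is $q-1$, and "$q-1$ is a descent of $\kpro(T)$" is literally the definition of "$q$ is a descent of $T$", so this case is immediate from the definition. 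When $i = q-1$: I need "$q-2$ is a descent of $\kpro(T)$" from "$q-1$ is a descent of $T$"; this is the generic relative-position argument applied at the top end, noting $q-2 = (q-1)-1 \ge 1$, with no wraparound, so it is covered by the same local ribbon analysis as the generic case. Thus the only genuinely new content is the local $K$-Bender–Knuth tracking, and once that is in place the modular bookkeeping and the $i=q$ case fall out from the definitions.
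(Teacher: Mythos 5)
Your case division is drawn in the wrong place, and as a result the genuinely hard case of the lemma is not addressed. The range in which both $i$ and $i-1$ are ordinary (row-position) descents is $1 < i < q$, not $1 \le i < q-1$: for $i=1$ the conclusion is that $i-1 \bmod q = q$ is a descent of $\kpro(T)$, and by the definition in the paper this means that $q-1$ is a descent of $\kpro^2(T)$. No single-pass local tracking of witness boxes through $\kbk_{q-1}\circ\cdots\circ\kbk_1$ can establish a statement about $\kpro^2(T)$; one must understand how the ``flow path'' of the second application of $K$-promotion relates to that of the first. This is exactly where the paper spends most of its effort (Case~2 of its proof): it introduces the flow path $\mathcal{F}(V)$ together with its upper and lower versions $\overline{\mathcal{F}}(V)$, $\underline{\mathcal{F}}(V)$, and shows that the pair $\{Q^\uparrow, Q\}$ at the lower-right corner $Q$ lies in $\mathcal{F}(\kpro(T))$ by comparing $\mathcal{F}(T)$ with $\mathcal{F}(\kpro(T))$. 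Your plan sweeps $i=1$ into the ``generic'' case, where the proposed argument simply does not apply (there is no label $0$ to serve as a witness), and instead flags $i=q-1$ as a boundary case, which it is not --- there the conclusion concerns the ordinary descent $q-2$ and is covered by the same analysis as $1<i<q$. So the ``modular bookkeeping'' does not fall out from the definitions; the $i=1$ case is the heart of the lemma and is missing.

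A secondary remark: for the range $1<i<q$ your $K$-Bender--Knuth route could in principle be made to work, but the paper's argument is more direct and you have not actually carried out the local case-check you defer to. The paper observes that each label moves up at most one row under $K$-promotion, so if the witnessing $i$ and $i+1$ are more than one row apart the conclusion is immediate; when they are in adjacent rows $h$ and $h+1$, a short analysis of the three relative column positions (the $i+1$ West of the $i$, the two in the same column, and East being impossible by increasingness) finishes the case. You would need to supply an equivalent amount of detail for your singleton-toggle analysis before the generic case could be considered complete, and then still supply the entire flow-path argument for $i=1$.
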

\begin{proof}
Throughout this proof, we use the original definition of $K$-promotion involving empty boxes, instead of the $K$-Bender-Knuth alternative.

\noindent
{\bf Case 1: ($1 < i < q$):}
$T$ has an instance of $i$ in row $h$ and an instance of $i+1$ in row $k$ with $h < k$. In $\kpro(T)$, there is an $i-1$ in row $h$ or $h-1$ and there is an $i$ in row $k$ or $k-1$. Hence $i-1$ is a descent in $\kpro(T)$ if $k-h >1$. Thus assume $k = h+1$.

Restrict attention to rows $h$ and $h+1$ of $T$. $T$ has a unique $i$ in row $h$ and a unique $i+1$ in row $h+1$. By increasingness, this $i+1$ is not East of this $i$.

Suppose the $i+1$ is West of the $i$. Then $T$ contains the local configuration
\scalebox{.6}{\ytableausetup{boxsize=2.2em}
$\ytableaushort{y z, {i \!+ \!1}}$}. Since $z \leq i < i+1$, the $i+1$ cannot move North during this application of $K$-promotion. Hence $\kpro(T)$ has $i$ in row $h+1$, and $i-1$ is a descent of $\kpro(T)$.

Thus, it remains to consider the case that $i$ and $i+1$ are in the same column of $T$. The $i+1$ can only move North if the $i$ moves. If the $i$ moves North, we are done, so assume $i$ moves West. Then $T$ has the local configuration \scalebox{.6}{$\ytableaushort{\none i, y {i \! +\! 1}}$} where $y \geq i$. But by increasingness, $y < i+1$. Hence $y=i$, so $T$ has the local configuration \scalebox{.6}{$\ytableaushort{\none i, i {i \! + \! 1}}$}. Therefore, $\kpro(T)$ has the local configuration \scalebox{.6}{$\ytableaushort{{i \!- \! 1} i, i}$} and thus $i-1$ is a descent of $\kpro(T)$.

\noindent
{\bf Case 2: ($i=1$):}
We must show that $q$ is a descent of $\kpro(T)$, that is, $q-1$ is a descent of $\kpro^2(T)$.

For $V \in \inc{a \times b}{q}$, let $\mathcal{F}(V)$ be the {\bf flow path} of $V$, that is the set of pairs of adjacent boxes $\{B, B'\}$ of $V$ such that $B$ and $B'$ are at some point part of the same short ribbon during the application of $\kpro$ to $V$. 
For $B$ a box of $a \times b$, we write $B^\uparrow$ for the box immediately North of $B$, $B^\rightarrow$ for the box immediately East of $B$, etc. Define the {\bf upper flow path} $\overline{\mathcal{F}}(V)$ to be those $\{B, B^\rightarrow\} \in \mathcal{F}(V)$ such that $\{B, B^\rightarrow\}$ is northmost in its columns among $\mathcal{F}(V)$ together with those $\{B, B^\downarrow\}$ such that $\{B, B^\downarrow\}$ is eastmost in its rows among $\mathcal{F}(V)$. Similarly define the {\bf lower flow path} $\underline{\mathcal{F}}(V)$ to be those pairs in $\mathcal{F}(V)$ that are southmost or westmost. Figure~\ref{fig:FlowPath} shows an example of these flow paths.

\begin{figure}[ht]
%
\begin{tikzpicture}
\draw (0,0) grid (4,5);
\node[] (A) at (0.5,4.5) {$1$};
\node[] (B) at (1.5,4.5) {$2$};
\node[] (C) at (2.5,4.5) {$4$};
\node[] (D) at (3.5,4.5) {$8$};
\node[] (E) at (0.5,3.5) {$2$};
\node[] (F) at (1.5,3.5) {$5$};
\node[] (G) at (2.5,3.5) {$7$};
\node[] (H) at (3.5,3.5) {$9$};
\node[] (I) at (0.5,2.5) {$3$};
\node[] (J) at (1.5,2.5) {$6$};
\node[] (K) at (2.5,2.5) {$8$};
\node[] (L) at (3.5,2.5) {$10$};
\node[] (M) at (0.5,1.5) {$6$};
\node[] (N) at (1.5,1.5) {$11$};
\node[] (O) at (2.5,1.5) {$13$};
\node[] (P) at (3.5,1.5) {$14$};
\node[] (Q) at (0.5,0.5) {$7$};
\node[] (R) at (1.5,0.5) {$12$};
\node[] (S) at (2.5,0.5) {$15$};
\node[] (T) at (3.5,0.5) {$16$};
\draw[<-,color=blue] (A) -- (B);
\draw[<-,color=blue] (B) -- (C);
\draw[<-,color=blue] (C) -- (G);
\draw[<-,color=blue] (G) -- (K);
\draw[<-,color=blue] (K) -- (L);
\draw[<-,color=blue] (L) -- (P);
\draw[<-,color=blue] (P) -- (T);
\draw[<-,color=BrickRed] (A) -- (E);
\draw[<-,color=BrickRed] (E) -- (I);
\draw[<-,color=BrickRed] (I) -- (M);
\draw[<-,color=BrickRed] (M) -- (Q);
\draw[<-,color=BrickRed] (Q) -- (R);
\draw[<-,color=BrickRed] (R) -- (S);
\draw[<-,color=BrickRed] (S) -- (T);
\draw[<-,color=Dandelion] (I) -- (J);
\draw[<-,color=Dandelion] (J) -- (K);
\end{tikzpicture}
\caption{The flow path of a tableau $V \in \inc{5 \times 4}{16}$. Elements of the lower flow path are shown in {\color{BrickRed} red}, while elements of the upper flow path are shown in {\color{blue} blue} and the remaining elements of the flow path are shown in {\color{Dandelion} yellow-orange}.}
\label{fig:FlowPath}
\end{figure}

Let $Q$ be the box in the lower right corner of $a \times b$.  By Proposition~\ref{prop:content_cycling}, $q$ appears in $\kpro(T)$. Hence by increasingness, $\kpro(T)$ has $q \in Q$. Thus it suffices to show that $\{ Q^\uparrow, Q\} \in \mathcal{F}(\kpro(T))$. The proof proceeds by comparing $\mathcal{F}(T)$ and $\mathcal{F}(\kpro(T))$.

Let $\overline{S} = \{ B \in a \times b : \{B, B^\rightarrow \} \in \overline{\mathcal{F}}(T)\}$. It is clear that $\overline{S}$ contains exactly one box from each column of $a \times b$, except the eastmost column.

If $\{ Q^\uparrow, Q\} \notin \mathcal{F}(\kpro(T))$, then there is some $B \in \bigcup \overline{\mathcal{F}}(\kpro(T))$ such that $B \in \overline{S}$. Choose $B$ to be maximally west among such boxes.

Since $B$ is chosen maximally west, $\{B^\leftarrow, B\} \notin \overline{\mathcal{F}}(\kpro(T))$. Suppose $\{B^\uparrow, B\} \in \overline{\mathcal{F}}(\kpro(T))$. Then in $\kpro(T)$, the entry of $B$ is strictly less than the entry of $B^{\uparrow\rightarrow}$. That is, if $h$ is the entry of $B$ and $k$ is the entry of $B^{\uparrow\rightarrow}$ then $h < k$. However, in $T$ we have $k+1 \in B^{\uparrow\rightarrow}$ and $h+1 \in B^\rightarrow$; this contradicts the increasingness of $T$. Thus $B$ is the northwestmost box of $a \times b$.

Since $1$ is a descent of $T$ and $B \in \overline{S}$, $T$ has $1 \in B$, $2 \in B^\downarrow$ and $2 \in B^\rightarrow$.

Let $\underline{S} = \{ B \in a \times b : \{B, B^\rightarrow \} \in \underline{\mathcal{F}}(T)\}$. 
We claim that if $\{B, B^\rightarrow \} \in \underline{\mathcal{F}}(T)$, then there is a pair $\{A, A^\rightarrow \} \in \mathcal{F}(\kpro(T))$ with $A$ North of $B$ in the same column. To see this, first observe by local analysis that if $\{B, B^\rightarrow \} \in \mathcal{F}(T)$ and $B^\uparrow \in \bigcup\mathcal{F}(\kpro(T))$, then $\{ B^\uparrow, B^{\uparrow\rightarrow}\} \in \mathcal{F}(\kpro(T))$. Now recall that $\underline{S}$ contains exactly one box from each column of $a \times b$, except the eastmost column. Moreover since $T$ has $2 \in B^\downarrow$, no box of $\underline{S}$ is in the northmost row. The claim follows.
Thus $Q^\uparrow \in \bigcup\mathcal{F}(\kpro(T))$ and we are done.

\noindent
{\bf Case 3: ($i=q$):}
By definition.
\end{proof}

\begin{proposition}\label{prop:descent_cycling}
The symbol $i$ is a descent of $T \in \inc{a \times b}{q}$ if and only if $i-1 \mod q$ is a descent of $\kpro(T)$.
\end{proposition}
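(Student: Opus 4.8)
The plan is to derive Proposition~\ref{prop:descent_cycling} from Lemma~\ref{lem:descents} by proving the converse direction, namely that if $i-1 \bmod q$ is a descent of $\kpro(T)$, then $i$ is a descent of $T$. The cleanest route is to exploit the fact that $\kpro$ is a bijection on $\inc{a\times b}{q}$ with some finite order, say $N$, so that $\kpro^{-1} = \kpro^{N-1}$. Thus I would first argue the ``only if'' direction is exactly Lemma~\ref{lem:descents}, and then obtain the ``if'' direction by iterating Lemma~\ref{lem:descents}: if $j := i-1 \bmod q$ is a descent of $S := \kpro(T)$, then applying Lemma~\ref{lem:descents} repeatedly shows $j - (N-1) \bmod q$ is a descent of $\kpro^{N-1}(S) = \kpro^N(T) = T$; since $\kpro^N = \mathrm{id}$ and descents are determined by residues mod $q$, I need $j - (N-1) \equiv i \pmod q$, i.e.\ $N \equiv 0 \pmod q$. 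So the key auxiliary fact to pin down is that the order $N$ of $\kpro$ on $\inc{a\times b}{q}$ is a multiple of $q$.

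That divisibility is immediate from binary content cycling: by Lemma~\ref{prop:content_cycling}, $\kpro$ shifts ${\rm Con}(T)$ cyclically by one position in a length-$q$ word, so $\kpro^N(T) = T$ forces ${\rm rot}^N({\rm Con}(T)) = {\rm Con}(T)$ for all $T$. Taking $T$ with non-full binary content (e.g.\ a tableau on $a\times b$ using only the labels $1,\dots,ab$, omitting at least one value $\le q$ when $q > ab$ — and when $q = ab$ every rectangular standard tableau already shows ${\rm rot}$ acts with full order $q$ on some content), the word ${\rm Con}(T)$ is not fixed by any proper cyclic shift dividing $q$, hence $q \mid N$. Alternatively, and perhaps more simply, I would phrase the whole argument without reference to $N$: since $\kpro$ is invertible, it suffices to observe that the map $i \mapsto i-1 \bmod q$ on the set $\{1,\dots,q\}$ of potential descent symbols is a bijection, and Lemma~\ref{lem:descents} says ``$i$ a descent of $T$'' implies ``$i-1\bmod q$ a descent of $\kpro(T)$''; applying this with $T$ replaced by $\kpro^{-1}(T)$ and using that both the descent-symbol map and $\kpro$ are bijections gives the reverse implication by a counting/injectivity argument — if the forward implication were strict for some $T$, the number of descents of $\kpro(T)$ would strictly exceed that of $T$, contradicting $\kpro$ being a bijection of finite order.

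Concretely, I would write: Fix $T$. Let $D(T) \subseteq \{1,\dots,q\}$ denote its set of descents. Lemma~\ref{lem:descents} gives a well-defined injection $D(T) \hookrightarrow D(\kpro(T))$ via $i \mapsto i-1\bmod q$ (injective since $i\mapsto i-1\bmod q$ is a bijection of $\{1,\dots,q\}$). Hence $|D(T)| \le |D(\kpro(T))|$ for every $T$. Since $\kpro$ has finite order, iterating gives $|D(T)| \le |D(\kpro(T))| \le \dots \le |D(\kpro^N(T))| = |D(T)|$, forcing equality $|D(T)| = |D(\kpro(T))|$ throughout. An injection between finite sets of equal size is a bijection, so $\{\, i - 1 \bmod q : i \in D(T)\,\} = D(\kpro(T))$, which is precisely the biconditional claimed.

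The main obstacle is not conceptual but a bookkeeping subtlety: the symbol $q$ is a descent of $T$ by the special \emph{definition} ($q-1$ a descent of $\kpro(T)$), not by the row-comparison rule, so I must make sure Lemma~\ref{lem:descents} is being applied with its Cases~2 and~3 exactly as stated and that the residue arithmetic ``$i-1 \bmod q$'' consistently means an element of $\{1,\dots,q\}$ (so that $1 \mapsto q$). I should double-check that the finite-order argument does not secretly circularly invoke Proposition~\ref{prop:descent_cycling} — it does not, since it only uses that $\kpro$ is invertible of finite order, which follows from $\inc{a\times b}{q}$ being finite and $\kpro$ a bijection — and that the injection $i\mapsto i-1\bmod q$ from $D(T)$ into $D(\kpro(T))$ is literally what Lemma~\ref{lem:descents} provides, including the boundary symbol.
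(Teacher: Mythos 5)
Your final argument is correct, but it reaches the reverse implication by a genuinely different route than the paper. The paper also gets the forward direction straight from Lemma~\ref{lem:descents}, and then for the converse writes $\kpro^{-1}(T)=\kpro^{M}(T)$ and applies the lemma $M$ times to conclude that $i+1$ is a descent of $\kpro^{-1}(T)$; this implicitly requires choosing $M\equiv -1 \pmod q$ (possible, e.g.\ $M+1 = q\cdot(\text{orbit size of }T)$), a point the paper leaves unstated. Your cardinality argument sidesteps that arithmetic entirely: Lemma~\ref{lem:descents} gives an injection $D(T)\hookrightarrow D(\kpro(T))$, $i\mapsto i-1 \bmod q$, so $|D(T)|\le |D(\kpro(T))|$, and chaining these inequalities around the (finite) orbit forces equality everywhere, whence the injection is a bijection and the biconditional follows. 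This buys robustness --- no divisibility condition on any exponent or orbit size is needed --- at the cost of being slightly less direct. One caveat: your first sketched route, which iterates the lemma $N$ times for $N$ the global order of $\kpro$ on $\inc{a\times b}{q}$, genuinely requires $q\mid N$, and that claim is false in the degenerate case $q=a+b-1$ (where $\inc{a\times b}{q}$ is a singleton and $N=1$); moreover your proposed witness ``a tableau using only the labels $1,\dots,ab$'' need not exist when $q<ab$ (e.g.\ $\inc{4\times 4}{12}$). Since your concrete write-up is the counting argument, which does not invoke $q\mid N$, the proof as finally stated stands; just make sure the abandoned first route does not survive into the final text.
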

\begin{proof}
Suppose $i$ is a descent of $T$. By Lemma~\ref{lem:descents}, $i-1 \mod q$ is a descent of $\kpro(T)$. Since $\inc{a \times b}{q}$ is finite, there is some $M$ such that $\kpro^M(T) = \kpro^{-1}(T)$. Hence by $M$ applications of Lemma~\ref{lem:descents}, $i+1$ is a descent of $\kpro^{-1}(T)$.
\end{proof}

\begin{definition}
Let $T \in \inc{a \times b}{q}$. For $1 \leq i < q$, $i$ is {\bf transpose descent} of $T$ if some instance of $i$ appears in a lower indexed column than some instance of $i+1$. Additionally $q$ is a {\bf transpose descent} of $T$ if $q-1$ is a descent of $\kpro(T)$.

Equivalently, $j$ is a transpose descent of $T$ if and only if $j$ is a descent of the transpose of $T$.
\end{definition}

\begin{proposition}\label{prop:transpose_descent_cycling}
The symbol $i$ is a transpose descent of $T \in \inc{a \times b}{q}$ if and only if $i-1 \mod q$ is a transpose descent of $\kpro(T)$.
\end{proposition}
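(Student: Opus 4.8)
The plan is to transpose and invoke Proposition~\ref{prop:descent_cycling}. Recall from the equivalence stated with the definition that, for $T\in\inc{a\times b}{q}$ and $1\le j\le q$, the symbol $j$ is a transpose descent of $T$ precisely when $j$ is a descent of the transpose tableau $T^{t}\in\inc{b\times a}{q}$; I would use this characterization, uniformly in $j$, as the working definition. With that reading, the proposition will follow as soon as we know that $K$-promotion commutes with transposition:
\[
\kpro(T^{t})=\kpro(T)^{t}\qquad\text{for every }T\in\inc{a\times b}{q}.
\]

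The key step is to establish this identity. By Proposition~\ref{prop:kbk} (valid for any shape), $\kpro=\kbk_{q-1}\circ\cdots\circ\kbk_{1}$ on both $\inc{a\times b}{q}$ and $\inc{b\times a}{q}$, so it suffices to show that each $K$-Bender--Knuth involution commutes with transposition, $\kbk_{i}(T^{t})=\kbk_{i}(T)^{t}$. This is essentially formal. Transposition is an adjacency-preserving bijection on boxes that carries the set of boxes of $T$ containing $i$ or $i+1$ onto the corresponding set for $T^{t}$, hence it carries connected components to connected components. The defining conditions for a short ribbon (no $2\times 2$ subshape; every row and every column of length at most $2$) are symmetric under interchanging rows and columns, so short ribbons are sent to short ribbons and, in particular, single-box components to single-box components. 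Since $\kbk_{i}$ modifies only the single-box components, on which it swaps the entry $i$ with $i+1$, and this swap is itself preserved by transposition, we get $\kbk_{i}(T^{t})=\kbk_{i}(T)^{t}$; composing the $\kbk_{i}$ yields the displayed identity.

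Granting the identity, the proposition is immediate: for $1\le i\le q$, the symbol $i$ is a transpose descent of $T$ iff $i$ is a descent of $T^{t}$, iff (by Proposition~\ref{prop:descent_cycling} applied on $\inc{b\times a}{q}$) $i-1\bmod q$ is a descent of $\kpro(T^{t})$, iff (by the identity) $i-1\bmod q$ is a descent of $\kpro(T)^{t}$, iff $i-1\bmod q$ is a transpose descent of $\kpro(T)$. I expect the only real care needed to be the bookkeeping at the boundary symbol $i=q$: one must read the definition of transpose descent at $q$ so that ``$q$ is a transpose descent of $T$'' means exactly ``$q$ is a descent of $T^{t}$'' (equivalently ``$q-1$ is a transpose descent of $\kpro(T)$''), which keeps the chain of equivalences uniform across all $i$. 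The substantive content is entirely the commutation of $\kpro$ with transposition, and the $\kbk$ factorization makes that painless; I do not anticipate any genuine obstacle.
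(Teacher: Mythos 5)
Your proof is correct and takes the same route as the paper, which simply observes that $K$-promotion ``clearly'' commutes with transposing and then cites Proposition~\ref{prop:descent_cycling}. Your elaboration of that commutation via the $\kbk_i$ factorization, and your note that the definition of transpose descent at the boundary symbol $q$ must be read as ``$q$ is a descent of the transpose'' for the equivalences to chain uniformly, are both sound refinements of what the paper leaves implicit.
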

\begin{proof}
Since clearly $K$-promotion commutes with transposing, the proposition is
immediate from Proposition~\ref{prop:descent_cycling}.
\end{proof}

The following is an enriched version of Corollary~\ref{cor:weakest_CFdFlike_statement} for rectangular tableaux.

\begin{proposition}
\label{prop:content_descents}
Let $T \in \inc{a \times b}{q}$ with $q$ prime. Suppose at least one of the following is true:
\begin{itemize}
\item $T$ does not have full binary content,
\item some $1 \leq i \leq q$ is not a descent in $T$, or 
\item some $1 \leq i \leq q$ is not a transpose descent in $T$.
\end{itemize}
 Then, the $K$-promotion orbit of $T$ has cardinality a multiple of $q$.
\end{proposition}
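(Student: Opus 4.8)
The plan is to reduce Proposition~\ref{prop:content_descents} to Fact~\ref{fact:referee} by exhibiting, in each of the three cases, a resonance-type commuting diagram over $\mathbb{Z}/q\mathbb{Z}$ on which the relevant $G$-orbit acts nontrivially. The unifying idea is that $\kpro$ cyclically permutes three natural $q$-periodic statistics of $T \in \inc{a \times b}{q}$: its binary content, its descent set, and its transpose descent set. The first is exactly Lemma~\ref{prop:content_cycling}, and the second and third are exactly Proposition~\ref{prop:descent_cycling} and Proposition~\ref{prop:transpose_descent_cycling}, which tell us that $i \mapsto i-1 \bmod q$ sends descents of $T$ to descents of $\kpro(T)$ (and similarly for transpose descents). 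So in all three cases we have a surjection $f$ from $\inc{a\times b}{q}$ onto a set $Y$ of length-$q$ binary words, with $\mathcal{C}_q = \langle \mathrm{rot}\rangle$ acting by cyclic shift, such that $\mathrm{rot}\circ f = f \circ \kpro$; that is, $(\inc{a\times b}{q}, \langle \kpro\rangle, f)$ exhibits resonance with frequency $q$.

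With that setup, here are the steps in order. First, dispatch the binary-content case: this is precisely Corollary~\ref{cor:weakest_CFdFlike_statement}, so nothing new is needed, but for uniformity I would note that ``$T$ does not have full binary content'' means $\mathrm{Con}(T)$ is not the all-ones word, hence is not fixed by $\mathrm{rot}$, so Fact~\ref{fact:referee} applies with $p = q$. Second, the descent case: let $\mathrm{Des}(T) \in \{0,1\}^q$ be the indicator word of the descent set $\{1,\dots,q\}$ of $T$ as defined just before Lemma~\ref{lem:descents}. Proposition~\ref{prop:descent_cycling} says $\mathrm{Des}(\kpro(T))$ is the cyclic shift of $\mathrm{Des}(T)$ by one position, so $(\inc{a\times b}{q}, \langle\kpro\rangle, \mathrm{Des})$ exhibits resonance with frequency $q$ (after checking $\mathrm{rot}$ acts nontrivially on the image, which it does: not every word is all-ones, e.g.\ a tableau with all of a single row's entries, or more simply any tableau missing a descent by hypothesis). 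If some $i$ is not a descent of $T$, then $\mathrm{Des}(T)$ has a $0$ entry and so is not the all-ones word; in particular $\mathrm{rot}\cdot\mathrm{Des}(T) \neq \mathrm{Des}(T)$ unless $\mathrm{Des}(T)$ is all-zeros — but wait, the all-zeros word is also $\mathrm{rot}$-fixed, so I must handle that. The fix: if $\mathrm{Des}(T)$ is all-zeros, then by Proposition~\ref{prop:descent_cycling} every $\kpro^k(T)$ also has empty descent set, and one argues directly that this forces the orbit to behave as in the classical $q$-periodic case — more cleanly, I would instead argue that the all-zeros descent word cannot occur for $T \in \inc{a\times b}{q}$ with $q \geq 2$ and $a,b\geq 1$ unless the tableau is trivial, since consecutive labels $i, i+1$ that both appear must create a descent in $T$ or in the transpose, and between $1$ and $q$ enough labels appear (by full/partial content) to force one; alternatively, simply note that if $\mathrm{Des}(T)$ is all-zeros then $T$ trivially satisfies the conclusion because its orbit size question is subsumed by first applying a power of $\kpro$ to reach a tableau where content is non-full, reducing to Case 1. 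Third, the transpose-descent case is identical to the second via Proposition~\ref{prop:transpose_descent_cycling} and the observation that $\kpro$ commutes with transposition, so $\mathrm{Des}$ of the transpose is $q$-periodically cycled.

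Then I combine: in each case we have produced a resonance of $\inc{a\times b}{q}$ with frequency $q$ via some $f \in \{\mathrm{Con}, \mathrm{Des}, \mathrm{Des}\circ(\text{transpose})\}$, and the hypothesis guarantees $f(T)$ is not the all-ones word. By Fact~\ref{fact:referee}, provided $f(T)$ is moved by $\mathrm{rot}$, the $\kpro$-orbit of $T$ has cardinality a multiple of $q$ (using $q$ prime). The one genuine obstacle is the degenerate subcase where $f(T)$ is the all-zeros word and therefore $\mathrm{rot}$-fixed: here Fact~\ref{fact:referee} does not directly apply, and I expect the cleanest resolution is to observe that the three conditions are not independent — if $f(T)$ is all-zeros for one choice of $f$, the hypothesis is still satisfied by one of the other two $f$'s for some tableau in the orbit, or the content itself is non-full, looping back to Case~1. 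Making this reduction airtight (showing the three bad sets cannot simultaneously be $\mathrm{rot}$-fixed for every tableau in a $\kpro$-orbit whose content is full) is the main thing to nail down; everything else is a direct assembly of the already-proved cycling lemmas with Fact~\ref{fact:referee}.
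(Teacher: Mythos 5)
Your overall strategy is exactly the paper's: dispose of the non-full-content case by Corollary~\ref{cor:weakest_CFdFlike_statement}, and in the remaining cases feed the cycling results (Proposition~\ref{prop:descent_cycling} or Proposition~\ref{prop:transpose_descent_cycling}) into Fact~\ref{fact:referee}. The one point that genuinely needs checking is the one you flag: Fact~\ref{fact:referee} requires $c\cdot f(x)\neq f(x)$, and a length-$q$ binary word is fixed by the one-step cyclic shift precisely when it is constant, so the hypothesis only rules out the all-ones descent word, not the all-zeros one. You are right to worry about this (the paper's own two-line proof passes over it silently), but neither of your proposed repairs closes the gap. The observation that consecutive labels $i,i+1$ which both appear must form a descent \emph{or} a transpose descent is correct, but it only shows that one of the two words is nonzero, not necessarily the one to which you are applying Fact~\ref{fact:referee}; and the suggestion to iterate $\kpro$ until the content fails to be full is a non-starter, since by Lemma~\ref{prop:content_cycling} full binary content is preserved by $\kpro$.

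The clean fix is local and elementary. Assume the content of $T$ is full (else Case 1 applies) and $a\geq 2$. Let $m$ be the entry of $T$ in position $(2,1)$; by increasingness every entry in rows $2,\dots,a$ is at least $m$, and $m\geq 2$, so the label $m-1$ (which appears, by fullness) occurs only in row $1$, strictly above the $m$ in row $2$. Hence $m-1$ is a descent and the descent word is not all-zeros; combined with the hypothesis that some $i$ is not a descent, the word is non-constant, hence moved by the cyclic shift, and Fact~\ref{fact:referee} applies. The transpose-descent case is symmetric, using $b\geq 2$ and the entry at $(1,2)$. Some such argument is genuinely needed rather than a formality: if $a=1$, full content forces $q=b$ and $T=(1,2,\dots,b)$, a $\kpro$-fixed point with empty descent set, so the statement as literally written fails there for $q$ prime --- a degenerate boundary case (it corresponds to $c=0$) that never arises in the paper's application in Theorem~\ref{thm:our_cfdf}, but one your writeup, like the paper's, currently does not exclude.
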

\begin{proof}
If $T$ does not have full binary content, Corollary~\ref{cor:weakest_CFdFlike_statement} applies. Otherwise, some $1 \leq i \leq q$ is not a (transpose) descent in $T$. The proposition is then immediate by Fact~\ref{fact:referee} together with either Proposition~\ref{prop:descent_cycling} or~\ref{prop:transpose_descent_cycling}.
\end{proof}

Finally, we prove the following lemma, which we will use in Section~\ref{subsec:conseq}.

\begin{lemma}\label{lem:three_for_two}
Let $T \in \inc{a \times b}{q}$ and suppose that $1 \leq i < q$ is both a descent and a transpose descent in $T$. Then the number of $i$'s in $T$ plus the number of $(i+1)$'s in $T$ is at least $3$.
\end{lemma}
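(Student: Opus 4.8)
The plan is to argue by contradiction, reducing immediately to the case that $T$ contains exactly one $i$ and exactly one $i+1$, and then extracting a contradiction from increasingness together with the rectangularity of the shape.

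First I would record the trivial lower bound. Since $i<q$, the relevant clause of the definition says that $i$ is a descent of $T$ only if both the label $i$ and the label $i+1$ actually occur in $T$; in particular the number of $i$'s plus the number of $(i+1)$'s in $T$ is at least $2$. So it suffices to rule out the possibility that this sum equals exactly $2$. Because both labels must occur, ``sum equals $2$'' forces $T$ to have exactly one box containing $i$, say box $(r_1,c_1)$, and exactly one box containing $i+1$, say box $(r_2,c_2)$, where I index rows from the top and columns from the left.

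Now I would invoke the two hypotheses. With a single instance of each label, the assertion that $i$ is a descent of $T$ is literally the inequality $r_1<r_2$, and the assertion that $i$ is a transpose descent of $T$ is literally $c_1<c_2$; hence the box containing $i$ lies strictly north and strictly west of the box containing $i+1$. The crux is then to consider the box $B=(r_1,c_2)$. Since $r_1\le a$ and $c_2\le b$ and the shape is the full rectangle $a\times b$, the box $B$ belongs to $T$. In row $r_1$ the box $B$ lies strictly east of the box containing $i$, so by strict increase along rows the entry of $B$ exceeds $i$; in column $c_2$ the box $B$ lies strictly north of the box containing $i+1$, so by strict increase down columns the entry of $B$ is less than $i+1$. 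No integer lies strictly between $i$ and $i+1$, so this is a contradiction, and therefore the sum is at least $3$.

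There is essentially no difficult step here; the only points that need a little care are the bookkeeping that a descent forces \emph{both} labels to be present (so that ``sum $=2$'' genuinely means one $i$ and one $i+1$, rather than, say, two $i$'s and no $i+1$), and the observation that it is precisely the hypothesis that $\lambda=a\times b$ is a rectangle which guarantees the ``corner'' box $(r_1,c_2)$ lies in the diagram — the analogous statement would fail for general shapes.
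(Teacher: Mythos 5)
Your proof is correct and follows essentially the same route as the paper's: both reduce to the case of a unique $i$ (and, in your version, a unique $i+1$), then examine the box at the intersection of the row of the $i$ and the column of the $i+1$, whose entry would have to satisfy $i<z<i+1$. Your explicit remark that rectangularity of the shape is what guarantees this corner box exists is a nice touch that the paper leaves implicit.
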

\begin{proof}
Since $i$ is a descent, both $i$ and $i+1$ must appear in $T$. Hence if $i$ appears at least twice in $T$, we are done. Thus assume $i$ appears exactly once in $T$. Since $i$ is a descent, some $i+1$ appears South of this $i$. Since $i$ is a transpose descent, some $i+1$ appears East of this $i$.

We claim these instances of $i+1$ are distinct, completing the proof of the lemma. Otherwise, we have $i+1$ SouthEast of $i$. Consider the label $z$ of the box that is in the row of the $i$ and in the column of the $i+1$. By the increasingness conditions on $T$, $i < z < i+1$, contradicting that $z$ is an integer.
\end{proof}

In Section~\ref{subsec:conseq}, we will use Proposition~\ref{prop:content_descents}, Lemma~\ref{lem:three_for_two}, and our main results, Theorems~\ref{thm:3Dprorow} and \ref{cor:rowinc}, to give in Theorem~\ref{thm:our_cfdf} a strengthening of a theorem of P.~Cameron--D.~Fon-der-Flaass on plane partitions in a box. 

\section{Promotion and rowmotion, revisited}\label{sec:toggle}

In this section, we switch our focus from increasing tableaux to our other main objects of study: \emph{plane partitions}. A \textbf{plane partition} is a stack of unit cubes in the positive orthant, justified toward the origin in all three directions. Plane partitions inside a box with side lengths $a$, $b$, and $c$, are counted by P.~MacMahon's box formula: $\displaystyle\prod\displaystyle\frac{i+j+k-1}{i+j+k-2}$ where the product is over all $1\leq i\leq a$, $1\leq j\leq b$, $1\leq k\leq c$~\cite{MacMahon}.

Plane partitions inside an $a\times b\times c$ box can be seen as \emph{order ideals} in the product of three chains poset $\mathbf{a}\times \mathbf{b}\times \mathbf{c}$. Thus, most of our discussion in this section centers on posets and order ideals, keeping in mind that all such general results can be applied to plane partitions.

We begin in Section~\ref{subsec:row} by discussing the \emph{rowmotion} action on order ideals and some results on the order of this action on products of two and three chains. In Section~\ref{subsec:toggle}, we discuss the 
\emph{toggle group}, first defined by P.~Cameron--D.~Fon-der-Flaass~\cite{fonderflaass} and further studied by N.~Williams and the third author~\cite{prorow}. In Section~\ref{sec:resonance_pp},  we use the main theorem of~\cite{prorow} to prove resonance of plane partitions under rowmotion.
The toggle group will be the algebraic structure underlying Sections~\ref{subsec:3Dprorow} and~\ref{subsec:provpi}, in which we revisit this main result of~\cite{prorow} by proving, in Theorem~\ref{thm:3Dprorow}, a generalization in the setting of \emph{$n$-dimensional lattice projections}. 

\subsection{Rowmotion}
\label{subsec:row}
Let $P$ be a finite partially ordered set (poset). 
$P$ is a \textbf{chain} if all its elements are mutually comparable. Let $\mathbf{n}$ denote the $n$-element chain.
The \textbf{product of $k$ chains} poset, $P=\mathbf{n_1}\times\mathbf{n_2}\times\cdots \mathbf{n_k}$, has as elements ordered integer $k$-tuples  $(x_1,x_2,\ldots,x_k)$ such that $0\leq x_i\leq n_i-1$ with partial order given by componentwise comparison.

A subset $I \subseteq P$ is an {\bf order ideal} if it is closed downward, i.e.\ if $y \in I$ and $x \leq y$, then $x \in I$. Denote the set of order ideals of $P$ as $J(P)$. An order ideal in $P$ is uniquely determined by its set of maximal elements, or alternatively by the set of minimal elements of its complement in $P$. We study the orbit structure of \textbf{rowmotion}, $\row \colon J(P) \to J(P)$, defined as the order ideal whose maximal elements are the minimal elements of $P \setminus I$.

The function $\row$ has a long history of rediscovery and has appeared under many names. A partial summary of previous work follows; for a more complete discussion, see~\cite{prorow}. A.~Brouwer--A.~Schrijver \cite{brouwer.schrijver} studied $\row$ for $P = {\bf a} \times {\bf b}$, the product of two chains. They discovered that this action has much smaller orbits than one naively expects:
\begin{theorem}[{\cite[Theorem~3.6]{brouwer.schrijver}}]
\label{thm:square}
The order of $\row$ on $J({\bf a} \times {\bf b})$ is $a + b$.
\end{theorem}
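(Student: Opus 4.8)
The plan is to exploit the equivariance between rowmotion and (poset-)promotion established by Striker–Williams in \cite{prorow}, which reduces the order of $\row$ on $J(\mathbf{a}\times\mathbf{b})$ to the order of promotion $\pro$ on the same set. So first I would recall that $\row$ and $\pro$ act on $J(\mathbf{a}\times\mathbf{b})$ with the same cycle type (this is the two-dimensional special case of the machinery we are about to generalize in Theorem~\ref{thm:3Dprorow}), hence they have the same order; thus it suffices to show $\pro$ has order $a+b$. Here $\pro$ is the composite of the row-toggles (equivalently, the sequence of toggles along successive fibers of the projection of $\mathbf{a}\times\mathbf{b}$ onto one coordinate axis). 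Under this lattice-projection picture, an order ideal of $\mathbf{a}\times\mathbf{b}$ is encoded by a lattice path, and promotion acts on the path in a transparent way.

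The key steps I would carry out are: (1) identify $J(\mathbf{a}\times\mathbf{b})$ with lattice paths from one corner of the $a\times b$ rectangle to the opposite corner (the boundary between the ideal and its complement), equivalently with binary words with $a$ steps of one kind and $b$ of the other; (2) show that under this identification $\pro$ (suitably set up as a product of diagonal/fiber toggles) acts as a \emph{cyclic shift} of the corresponding word of length $a+b$ — this is the combinatorial heart of the argument, and is exactly the kind of statement that foreshadows our binary-content cycling Lemma~\ref{prop:content_cycling}; (3) conclude that $\pro^{a+b} = \mathrm{id}$ since shifting a word of length $a+b$ by $a+b$ positions returns it to itself, so the order divides $a+b$; (4) exhibit a single order ideal whose associated word is aperiodic (e.g. the word $1^a 0^b$, corresponding to the "staircase-free" ideal), whose $\pro$-orbit therefore has size exactly $a+b$, forcing the order to equal $a+b$.

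The main obstacle is step (2): verifying that the toggle-group description of promotion really does translate into a clean cyclic shift on the lattice-path/word encoding. This requires a careful bookkeeping of how the successive fiber toggles move the "corner" of the lattice path one diagonal step at a time, and checking that after processing all fibers the net effect is precisely the rotation of the boundary word by one position. Once that correspondence is pinned down, steps (3) and (4) are immediate. An alternative route, if one prefers to avoid the explicit path bijection, is to invoke the general equivariant framework directly: realize $J(\mathbf{a}\times\mathbf{b})$ inside the $n$-dimensional lattice projection setup of Section~\ref{subsec:3Dprorow} with $n=2$, apply Theorem~\ref{thm:3Dprorow} to move between $\row$ and a promotion that manifestly rotates a word, and then read off the order $a+b$; I would likely present the path-based argument as the more self-contained and classical one.
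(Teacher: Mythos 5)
Your plan is correct, but it is not the route this paper takes. You reprove the theorem the way Striker--Williams originally did in \cite{prorow}: reduce $\row$ to $\pro$ via their conjugacy result, encode an order ideal of $\mathbf{a}\times\mathbf{b}$ as its boundary lattice path (a binary word of length $a+b$ with $a$ letters of one kind), show that $\pro$ acts as a cyclic rotation of that word, and finish by exhibiting an aperiodic word such as $1^a0^b$. All of those steps are sound, and the one you flag as the heart of the matter --- that the column-by-column toggles rotate the boundary word --- is exactly the content of the two-dimensional analysis in \cite{prorow}; the paper itself acknowledges in Remark~\ref{remark:p_pp} that this yields a simple proof of Theorem~\ref{thm:square}. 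The paper's own proof is different and deliberately showcases its new machinery: it identifies $J(\mathbf{a}\times\mathbf{b})$ with $J(\mathbf{a}\times\mathbf{b}\times\mathbf{1})$, applies the equivariant bijection of Theorem~\ref{cor:rowinc} to transport $\row$ to $\kpro$ on single-row increasing tableaux $\inc{a\times 1}{a+b}$, and then invokes Fact~\ref{fact:1row_pro}, for which the order is trivially $a+b$ because $K$-promotion on a one-row tableau just cyclically shifts its binary content. The trade-off: your approach is self-contained and classical but requires the nontrivial path-rotation bookkeeping (or an appeal to \cite{prorow}); the paper's approach makes the endgame essentially a triviality, at the cost of relying on the main bijection Theorem~\ref{cor:rowinc}, whose proof is the real work elsewhere in the paper.
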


P.~Cameron--D.~Fon-der-Flaass \cite{fonderflaass} studied the same question on plane partitions, that is, $J({\bf a} \times {\bf b} \times {\bf c})$. 
\begin{theorem}[{\cite[Theorem~6(b)]{fonderflaass}}]
\label{thm:height2}
The order of $\row$ on $J({\bf a} \times {\bf b} \times {\bf 2})$ is $a + b + 1$.
\end{theorem}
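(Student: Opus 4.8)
The original proof is due to P.~Cameron--D.~Fon-der-Flaass \cite{fonderflaass}; we sketch the route we will be able to take once our later machinery is in place. The plan is to transport the problem to increasing tableaux: our main equivariant bijection (Theorem~\ref{thm:mainbij}) will identify $\row$ on $J(\mathbf{a}\times\mathbf{b}\times\mathbf{2})$ with $\kpro$ on $\inc{a\times b}{a+b+1}$, and an equivariant bijection preserves the order of the action, so it suffices to prove that $\kpro$ has order exactly $a+b+1$ on $\inc{a\times b}{a+b+1}$. We do this by matching a lower and an upper bound. (Alternatively one may stay on the plane-partition side and argue directly with the toggle group of Section~\ref{sec:toggle}.)

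For the lower bound, recall from Theorem~\ref{cor:resonance_of_increasing_tableaux} that $\kpro$ cyclically rotates the binary content, so ${\rm Con}(\kpro^k(T))={\rm rot}^k({\rm Con}(T))$, whence the $\kpro$-orbit of any $T$ has cardinality divisible by the rotational period of the length-$(a+b+1)$ word ${\rm Con}(T)$. Apply this to the ``staircase'' filling $T_0$ with $(i,j)$-entry $i+j-1$: it lies in $\inc{a\times b}{a+b+1}$ because its largest entry is $a+b-1$, and ${\rm Con}(T_0)=(1,\dots,1,0,0)$. Since its two $0$'s are consecutive, this word is not a nontrivial power of a shorter word, so its rotational period is $a+b+1$. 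Therefore the order of $\kpro$ on $\inc{a\times b}{a+b+1}$, and hence the order of $\row$ on $J(\mathbf{a}\times\mathbf{b}\times\mathbf{2})$, is a multiple of $a+b+1$.

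It remains to prove the upper bound $\kpro^{a+b+1}={\rm id}$ on $\inc{a\times b}{a+b+1}$, equivalently $\row^{a+b+1}={\rm id}$ on $J(\mathbf{a}\times\mathbf{b}\times\mathbf{2})$, and this is the crux. I would argue on the tableau side, exploiting that every $T\in\inc{a\times b}{a+b+1}$ omits at most two of the values $1,\dots,a+b+1$ (any maximal chain of cells has exactly $a+b-1$ cells, carrying distinct labels): with so few free labels the short ribbons that move under $\kpro$ are highly constrained, and one shows that after $a+b+1$ steps not just the content (automatic by resonance) but the whole filling has returned, by tracking the at-most-two missing labels and the cells that ever participate in a nontrivial ribbon. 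An alternative, purely poset-theoretic route is to write $I\in J(\mathbf{a}\times\mathbf{b}\times\mathbf{2})$ as a nested pair $I_1\subseteq I_0$ of order ideals of $\mathbf{a}\times\mathbf{b}$, express $\row(I)$ through $\row$ on $J(\mathbf{a}\times\mathbf{b})$ --- which has order $a+b$ by Theorem~\ref{thm:square} --- together with the interaction forced by the order relations between the two layers, and show the combined action has period $a+b+1$; the toggle-group picture of Section~\ref{sec:toggle} is what makes this interaction tractable. The main obstacle is exactly this step: controlling the coupling between the two layers (equivalently, the global dynamics of the short ribbons). The ``height $2$'' hypothesis is essential, since the corresponding equality already fails for $J(\mathbf{a}\times\mathbf{b}\times\mathbf{c})$ once $c\ge 3$.
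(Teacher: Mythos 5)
Your lower bound is sound (the staircase tableau's content word $(1,\dots,1,0,0)$ has rotational period $a+b+1$, so by Lemma~\ref{prop:content_cycling} its orbit size is a multiple of $a+b+1$; this plays the role of Fact~\ref{fact:minimal_tab}), and the reduction via the equivariant bijection of Theorem~\ref{cor:rowinc} is the right first move. But the upper bound $\kpro^{a+b+1}=\mathrm{id}$, which you yourself identify as ``the crux,'' is not proved: you offer two strategies (tracking the at most two missing labels and the ribbon dynamics on $\inc{a\times b}{a+b+1}$, or coupling the two layers $I_1\subseteq I_0$ of order ideals of $\mathbf{a}\times\mathbf{b}$), but neither is carried out, and neither is routine --- controlling the global ribbon dynamics on a general $a\times b$ rectangle is exactly the hard part. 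So there is a genuine gap.

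The idea you are missing is the tri-fold symmetry of the box (Corollary~\ref{cor:trifold_sym}): instead of projecting $J(\mathbf{a}\times\mathbf{b}\times\mathbf{2})$ onto the $a\times b$ face, project onto the $\mathbf{2}\times\mathbf{a}$ face, obtaining a $\kpro$-equivariant bijection with the \emph{two-row} tableaux $\inc{2\times a}{a+b+1}$. For two-row rectangles the statement $\kpro^{q}=\mathrm{id}$ on $\inc{2\times n}{q}$ is already known --- it is \cite[Theorem~1.3]{pechenik} (stated there for full binary content, but the proof extends) --- and this supplies the upper bound with no new ribbon analysis. Combined with Fact~\ref{fact:minimal_tab} for the lower bound, the theorem follows. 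In short: your plan stalls precisely where the paper invokes the symmetry $\inc{a\times b}{a+b+1}\cong\inc{2\times a}{a+b+1}$ to land in a shape where the order of $K$-promotion is already understood.
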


Extrapolating from Theorems~\ref{thm:square} and~\ref{thm:height2}, one might speculate that $\row$ has order $a+b+c-1$ on $J({\bf a} \times {\bf b} \times {\bf c})$. In general, the order is unknown but often significantly greater than this naive guess. However, P.~Cameron--D.~Fon-der-Flaass established the following related fact. 
\begin{theorem}[{\cite[Theorem~6(a)]{fonderflaass}}]
\label{thm:cfdf}
If $a + b + c -1$ is prime and $c > ab - a - b + 1$, then the cardinality of every orbit of $\row$ on $J({\bf a} \times {\bf b} \times {\bf c})$ is a multiple of $a+b+c-1$.
\end{theorem}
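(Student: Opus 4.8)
The plan is to transport the statement across the paper's main equivariant bijection and then finish with a pigeonhole count. Set $q := a+b+c-1$, which is prime by hypothesis. First I would invoke Theorem~\ref{cor:rowinc}: it provides an equivariant bijection between $J(\mathbf{a}\times\mathbf{b}\times\mathbf{c})$ under $\row$ and $\inc{a \times b}{q}$ under $\kpro$. Since an equivariant bijection carries orbits to orbits of the same cardinality, it suffices to prove that \emph{every} $\kpro$-orbit on $\inc{a\times b}{q}$ has size a multiple of $q$.

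The second step is to recognize that the arithmetic hypothesis is exactly a counting threshold. A tableau of shape $a\times b$ has $ab$ boxes, hence involves at most $ab$ distinct entries, while the hypothesis $c > ab - a - b + 1 = (a-1)(b-1)$ rearranges to $q = a+b+c-1 > ab$. Therefore no $T \in \inc{a\times b}{q}$ can contain every label in $\{1,2,\dots,q\}$; that is, no such $T$ has full binary content. Now Corollary~\ref{cor:weakest_CFdFlike_statement} applies to \emph{every} $T \in \inc{a\times b}{q}$ (its hypotheses being ``$q$ prime'' and ``$T$ not of full binary content''), so each $\kpro$-orbit has cardinality a multiple of $q$; pulling back along Theorem~\ref{cor:rowinc} yields the theorem. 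This is the same line of argument, with a weaker combinatorial input, as the one I would use for the stronger Theorem~\ref{thm:our_cfdf}, where ``full binary content'' is replaced by the conjunction of full content with all descents and all transpose descents, and Corollary~\ref{cor:weakest_CFdFlike_statement} is replaced by Proposition~\ref{prop:content_descents} together with the packing estimate of Lemma~\ref{lem:three_for_two}.

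I do not expect an obstacle inside this deduction: the only things to get right are the rearrangement $c > ab-a-b+1 \iff q > ab$ and the immediate fact that the intertwining relation defining an equivariant bijection preserves orbit sizes. The real content is imported --- namely Theorem~\ref{cor:rowinc} and the binary-content cycling of Lemma~\ref{prop:content_cycling} that underlies Corollary~\ref{cor:weakest_CFdFlike_statement} via Fact~\ref{fact:referee}. If instead one wanted a proof staying entirely with order ideals, closer in spirit to \cite{fonderflaass}, one would present $\row$ as a product of rank-toggles, track the plane-partition resonance statistic $X_{\max}\circ D$ of Theorem~3.10, and show that $c > ab - a - b + 1$ forces $X_{\max}\circ D(I)$ to be moved by the cyclic shift for every order ideal $I$; there the main obstacle is pinning down that statistic and proving it is never shift-fixed, rather than the primality bookkeeping, which is again handled by Fact~\ref{fact:referee}.
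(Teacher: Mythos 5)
Your argument is correct, but it is not the route the paper takes for this theorem. The paper proves Theorem~\ref{thm:cfdf} entirely on the order-ideal side, in Section~\ref{sec:resonance_pp}, before the main bijection is even available: it encodes $I$ by its boundary path matrix, observes that the matrix has exactly $ab$ ones but $a+b+c-1>ab$ columns, so some column is all zeros, hence $X_{\max}(I)$ has a zero entry; Corollary~\ref{cor:weakest_CFdFlike_statement_pp} (resonance of $\pro$ via Lemma~\ref{prop:zeroscol_cycling}, plus Fact~\ref{fact:referee}) then gives that every $\pro$-orbit has size a multiple of $a+b+c-1$, and the transfer to $\row$ is done with the two-dimensional Striker--Williams conjugacy, Theorem~\ref{thm:StWiProrow}. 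You instead transfer to $\inc{a\times b}{q}$ via Theorem~\ref{cor:rowinc}, note $c>ab-a-b+1\iff q>ab$ so that no tableau on $ab$ boxes can have full binary content, and apply Corollary~\ref{cor:weakest_CFdFlike_statement}. The two proofs are the same pigeonhole wearing different clothes ($ab$ ones versus $q$ columns on one side, $ab$ boxes versus $q$ labels on the other), and both ultimately rest on a resonance statement plus Fact~\ref{fact:referee}; there is no circularity in your version, since Theorem~\ref{cor:rowinc} nowhere uses Theorem~\ref{thm:cfdf}. What the paper's proof buys is economy of means --- it needs only the cited 2012 result and the boundary-path-matrix bookkeeping, not the new machinery of Theorem~\ref{thm:3Dprorow} and Lemma~\ref{thm:main} that underlies Theorem~\ref{cor:rowinc}. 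What your proof buys is a cleaner pigeonhole and, as you note, a template that upgrades directly to the paper's actual proof of the sharper Theorem~\ref{thm:our_cfdf}, where ``no full binary content'' is replaced by the descent and transpose-descent analysis. Your closing sketch of an order-ideal proof is essentially the paper's, with the small caveat that the paper applies the statistic $X_{\max}$ to $\pro$-orbits directly and only conjugates to $\row$ at the very end, rather than tracking $X_{\max}\circ D$ under $\row$ throughout.
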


We will revisit Theorems~\ref{thm:square} and \ref{thm:height2} in Remark~\ref{remark:p_pp}.
In Section~\ref{sec:resonance_pp}, we give a new proof of Theorem~\ref{thm:cfdf}.
Furthermore, as a consequence of our main equivariant bijection between plane partitions and increasing tableaux (Theorem~\ref{cor:rowinc}), we will show, in Theorem~\ref{thm:our_cfdf}, that in Theorem~\ref{thm:cfdf} the condition $c > ab - a -b +1$ may be relaxed to $c > \frac{2ab-2}{3} - a - b + 2$. This is evidence toward the conjecture of P.~Cameron--D.~Fon-der-Flaass \cite{fonderflaass} that this condition may be dropped entirely.

The approach of P.~Cameron--D.~Fon-der-Flaass was to reinterpret rowmotion as a \emph{toggle group action}. We describe the toggle group in the next subsection. 

\subsection{The toggle group}
\label{subsec:toggle}

The \textbf{toggle group} was first studied by P.\ Cameron--D.~Fon-der-Flaass~\cite{fonderflaass} and subsequently N.~Williams and the third author \cite{prorow}. It is the subgroup of the symmetric group on all order ideals $\mathfrak{S}_{J(P)}$ generated by certain involutions, called \textbf{toggles}.
For each element $e\in P$ define its \textbf{toggle} $t_e:J(P)\rightarrow J(P)$ as follows.
\[
 t_e(X) =
  \begin{cases}
   X\cup\{e\} & \text{if } e\notin X \text{ and } X\cup\{e\}\in J(P) \\
   X\setminus\{e\} & \text{if } e\in X \text{ and } X\setminus\{e\}\in J(P) \\
   X       & \text{otherwise}
  \end{cases}
\]


\begin{remark}
\label{remark:commute}
Observe that $t_e, t_f$ commute whenever neither $e$ nor $f$ covers the other.
\end{remark}

\medskip
The following theorem interprets rowmotion as a toggle group action.
\begin{theorem}[\cite{fonderflaass}]
\label{thm:row_tog}
Given any poset $P$, $\row$ is the toggle group element that toggles the elements of $P$ in the reverse order of any linear extension. If $P$ is ranked, this is the same as toggling the ranks (rows) from top to bottom.
\end{theorem}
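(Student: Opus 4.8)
The plan is to fix a linear extension $e_1, e_2, \dots, e_n$ of $P$ (so that $e_i <_P e_j$ forces $i < j$), to set $T := t_{e_1}\circ t_{e_2}\circ\cdots\circ t_{e_n}$ — the toggle group element that toggles in reverse linear-extension order, meaning $t_{e_n}$ is applied first — and to prove $T(I) = \row(I)$ for every order ideal $I$. I will use the characterization that $x \in \row(I)$ if and only if $x \le_P m$ for some $m \in \min(P \setminus I)$, equivalently that $\max \row(I) = \min(P \setminus I)$. For a fixed $I$, write $I_k := (t_{e_k}\circ\cdots\circ t_{e_n})(I)$, with $I_{n+1} = I$ and $I_1 = T(I)$; each $I_k$ lies in $J(P)$ since toggles preserve order ideals. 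Two locality observations drive everything: (i) a toggle $t_{e_m}$ changes membership only of $e_m$, so $I_k$ agrees with $I$ on all $e_j$ with $j < k$ and agrees with $I_1 = T(I)$ on all $e_j$ with $j \ge k$; and (ii) by the linear-extension property, every element $<_P e_k$ has index $< k$ while every element $>_P e_k$ has index $> k$.

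I would then prove by downward induction on $k$, from $n$ to $1$, the assertion: for all $j$ with $k \le j \le n$, $e_j \in I_k$ if and only if $e_j \in \row(I)$. Passing from $k+1$ to $k$ only requires pinning down the status of $e_k$ after applying $t_{e_k}$, since higher-index elements are untouched by this toggle. Just before toggling, $e_k \in I_{k+1} \iff e_k \in I$ by (i). If $e_k \notin I$: the toggle inserts $e_k$ exactly when every element $<_P e_k$ lies in $I_{k+1}$, which by (i)--(ii) holds iff every element $<_P e_k$ lies in $I$, i.e. iff $e_k \in \min(P \setminus I) = \max \row(I)$; and when $e_k \notin \min(P \setminus I)$, the element $e_k \in P \setminus I$ is not below any minimal element of $P\setminus I$, so $e_k \notin \row(I)$ — so the two sides match in both cases. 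If $e_k \in I$: the toggle deletes $e_k$ exactly when no cover of $e_k$ lies in $I_{k+1}$; covers of $e_k$ have index $> k$, so by the inductive hypothesis this is equivalent to no cover of $e_k$ lying in $\row(I)$, which in turn is equivalent to $e_k \notin \row(I)$ — the forward direction because $\row(I)$ is a down-set, the reverse because a saturated chain from $e_k$ up to some $m \in \min(P \setminus I)$ with $e_k <_P m$ exhibits a cover of $e_k$ that is $\le_P m$ and hence in $\row(I)$. Thus $e_k$ survives in $I_k$ iff $e_k \in \row(I)$, completing the step; taking $k = 1$ gives $T(I) = I_1 = \row(I)$.

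For the ranked statement I would take the linear extension to list elements rank by rank, lowest rank first; its reverse processes the top rank first, then the next rank down, and so on. Since two elements of the same rank are never in a covering relation, their toggles commute by Remark~\ref{remark:commute}, so the order within each rank is immaterial, and $\row$ is realized by toggling the ranks from top to bottom. The step I expect to be the main obstacle is the bookkeeping in the inductive case analysis — in particular establishing cleanly the equivalence ``no cover of $e_k$ lies in $\row(I)$'' $\iff$ ``$e_k \notin \row(I)$'' through the saturated-chain argument — together with staying scrupulous about the direction conventions (reverse linear extension, order of composition) so that locality observations (i) and (ii) are invoked correctly.
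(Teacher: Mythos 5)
Your proof is correct. Note that the paper itself offers no proof of this statement: it is quoted directly from P.~Cameron--D.~Fon-der-Flaass \cite{fonderflaass}, so there is nothing internal to compare against. Your downward induction along the linear extension is the standard argument, and every step checks out: the locality observations (i)--(ii) correctly reduce the insertion condition to ``$e_k \in \min(P\setminus I)$'' and the deletion condition to the status of the covers of $e_k$ in $\row(I)$, and both equivalences with membership in $\row(I)$ (via $\row(I)$ being a down-set in one direction and a saturated chain up to a minimal element of $P\setminus I$ in the other) are handled correctly. The reduction of the ranked case to commuting toggles within a rank via Remark~\ref{remark:commute} is also right.
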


In 2012~\cite{prorow}, N.~Williams and the third author built on the work of P.~Cameron--D.~Fon-der-Flaass, showing that rowmotion is conjugate to the toggle group action they called \textbf{promotion}, or $\pro$, defined as toggling the elements of the poset from left to right (given a suitable notion of left-to-right, for which they used the term \emph{rc-poset}). 

\begin{theorem}[{\cite[Theorem~5.2]{prorow}}]
\label{thm:StWiProrow}
For any rc-poset $P$, there is an equivariant bijection
between $J(P)$ under $\pro$ and $J(P)$ under $\row$.
\end{theorem}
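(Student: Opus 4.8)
The plan is to realize both $\pro$ and $\row$ as conjugate elements of the toggle group $\mathfrak{S}_{J(P)}$, exploiting the commutation relation of Remark~\ref{remark:commute}. By Theorem~\ref{thm:row_tog}, $\row$ toggles the elements of $P$ in reverse order of a linear extension; in particular, for an rc-poset, one may toggle rows from top to bottom, i.e.\ $\row = R_1 R_2 \cdots R_m$ where $R_i$ is the product of the toggles of all elements in row $i$ (these toggles within a single row pairwise commute, so $R_i$ is well-defined). Dually, $\pro = C_1 C_2 \cdots C_n$ where $C_j$ is the product of the toggles along column $j$ — here the columns partition $P$ into antichains by the ``c'' coordinate, so again each $C_j$ is a well-defined involution.

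First I would set up the bijection explicitly rather than abstractly. The standard device (following \cite{prorow}) is to produce, for each $k$, a toggle-group element $w_k$ obtained by toggling the first $k$ ``diagonals'' of $P$ in an appropriate order, interpolating between the row-decomposition and the column-decomposition. Concretely, one defines a sweep that toggles elements ordered first by row and then, after $k$ steps, switches to ordering by column; conjugating $\row$ by a suitable product of such partial sweeps should yield $\pro$. The key algebraic input is that whenever two elements $e, f$ lie in the same row or the same column (more precisely, whenever neither covers the other), $t_e$ and $t_f$ commute, so large blocks of toggles can be reordered freely. This lets one slide a toggle past an entire row or column at the cost of a conjugation, and iterating produces the desired conjugacy $\pro = g \cdot \row \cdot g^{-1}$ for an explicit $g$.

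The cleanest concrete route: index elements of the rc-poset by $(r,c)$ with rowmotion toggling in order of increasing $r$ (breaking ties by, say, increasing $c$) and promotion toggling in order of increasing $c$ (breaking ties by increasing $r$). Define $g$ to be the product of toggles of all elements $(r,c)$ with $r > c$ (or some such ``triangular'' region), taken in a fixed order. Then one checks, using only the commutation relation, that $g\,\row\,g^{-1}$ rearranges the toggle word of $\row$ into the toggle word of $\pro$: each conjugation step moves one toggle across a commuting block, and the net effect transposes the roles of the row-ordering and the column-ordering. The equivariant bijection $J(P) \to J(P)$ is then simply the action of $g$ (an element of the toggle group, hence a bijection on $J(P)$), and equivariance $g \circ \row = \pro \circ g$ is exactly the conjugacy relation.

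The main obstacle is bookkeeping: verifying that every transposition of adjacent toggles performed in the argument is legitimate, i.e.\ that the two toggles being swapped really do commute (neither covers the other in $P$). This requires the rc-poset structure — the guarantee that elements in a common row, or in a common column, are incomparable and in fact non-covering — and it is precisely where the hypothesis ``$P$ is an rc-poset'' is used. I would isolate this as a lemma about which pairs of toggles commute, then present the conjugation as a sequence of well-defined moves, deferring to that lemma at each step. Everything else — that $g$ lies in the toggle group, that it therefore induces a bijection on $J(P)$, and that conjugacy gives equivariance — is formal.
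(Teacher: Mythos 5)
Your general framework (realize both maps in the toggle group and conjugate one word into the other using Remark~\ref{remark:commute}) is the right one, but the proposal is missing the one idea that makes the argument work, and the explicit conjugator you propose is not justified. The difficulty is that $\row$ and $\pro$ are products over \emph{different} generating families: $\row = R_m\cdots R_1$ is a word in the rank (row) toggles, while $\pro = C_n\cdots C_1$ is a word in the file (column) toggles. Individual toggles $t_e, t_f$ commute only when neither element covers the other, and the row word and the column word are \emph{not} related by such commutations — already for $P = \mathbf{2}\times\mathbf{2}$, rowmotion applies $t_{(0,0)}$ last while every ordering of the column toggles applies $t_{(0,0)}$ and $t_{(1,1)}$ simultaneously, and $t_{(0,0)}$ commutes with neither $t_{(0,1)}$ nor $t_{(1,0)}$. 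So there is no sequence of ``legitimate adjacent transpositions'' turning one word into the other, and conjugating by a product of toggles over a triangular region $\{(r,c): r>c\}$ has no mechanism behind it (that product is not even well-defined as stated, since such a region contains covering pairs). Your claim that the columns are antichains is also false in general (e.g.\ $(0,0)<(1,1)$ lie in one column); what saves the definition of $C_j$ is that same-column elements are non-covering, not incomparable.

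The proof the paper models (its generalization, Theorem~\ref{thm:3Dprorow}, which recovers this statement as the $2$-dimensional case $v=(1,1)$, $w=(1,-1)$) supplies two ingredients you omit. First, Lemma~\ref{lem:conjugatelem}: a linearly indexed family of involutions in which non-adjacent members commute has all of its ``Coxeter elements'' (products of each generator once, in any order) conjugate; this applies separately to the row toggles and to the column toggles, but says nothing about comparing a row word to a column word. Second, a bridge between the two families: gyration (Definition~\ref{def:gyr}), which toggles all even-rank elements and then all odd-rank elements. Because the rank $x_1+x_2$ and the file $x_1-x_2$ of any element have the same parity, gyration is simultaneously a product of all the row toggles in some order \emph{and} a product of all the column toggles in some order (Lemma~\ref{lem:gyr}). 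Hence $\row \sim \mathrm{Gyr} \sim \pro$, each conjugacy coming from Lemma~\ref{lem:conjugatelem} within a single family, and composing the two explicit conjugators gives the equivariant bijection. Without gyration (or an equivalent element expressible in both decompositions), your bookkeeping cannot close.
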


We discuss this result in further detail in Sections~\ref{subsec:3Dprorow} and~\ref{subsec:provpi} and give a multidimensional generalization of it in Theorem~\ref{thm:3Dprorow}.

\begin{remark}
\label{remark:p_pp}
For many posets, the orbit structure of promotion is easier to study than that of rowmotion. Thus 
Theorem~\ref{thm:StWiProrow} yielded many results on the orbit structure of rowmotion by translating from the analogous result on promotion.
Theorem~\ref{thm:StWiProrow} was applied in \cite{prorow} to give simple new proofs of Theorem~\ref{thm:square} of A.~Brouwer--A.~Schrijver and Theorem~\ref{thm:height2} of P.~Cameron--D.~Fon-der-Flaass (discussed in Section~\ref{subsec:row}), as well as easy proofs of the \emph{cyclic sieving phenomenon} of V.\ Reiner, D.\ Stanton, and D.~White~\cite{reiner.stanton.white} in these cases and a few others. 
\end{remark}

In the next subsection, we use Theorem~\ref{thm:StWiProrow} 
to prove resonance of rowmotion on plane partitions.

\subsection{Resonance of plane partitions}
\label{sec:resonance_pp}
In this subsection, we prove our second  resonance result, Theorem~\ref{cor:resonance_of_pp_directly_row}. We also give a new proof of Theorem~\ref{thm:cfdf}.

In \cite[Section~7.2]{prorow}, N.~Williams and the third author applied their theory to plane partitions, that is, the order ideals $J(\mathbf{a}\times\mathbf{b}\times\mathbf{c})$. They characterized 
$J(\mathbf{a}\times\mathbf{b}\times\mathbf{c})$ in terms of \emph{boundary path matrices}. We give a sketch of this 
characterization here; for futher details, see~\cite{prorow}.
Given an order ideal in a special kind of planar poset (in the language of~\cite{prorow}, an rc-poset of height 1, or in the language of the next section, a poset with a 2-dimensional lattice projection), its \textbf{boundary path} is a binary sequence that encodes the path that separates the order ideal from the rest of the poset.
Given a plane partition $I\in J(\mathbf{a}\times\mathbf{b}\times\mathbf{c})$, its \textbf{boundary path matrix} is a $b\times (a+b+c-1)$ matrix $\{X_{i,j}\}$ with entries in $\{0,1\}$ such that
the $i$th row consists of the boundary path of layer $i$ preceded by $i - 1$ zeros and succeeded by $b - i$ zeros. 
The rows of a boundary path matrix each sum to $a$ and the entries obey the condition 
\[ \mbox{if } \sum_{j=1}^k X_{i,j} = \sum_{j=1}^k X_{i+1,j},
 \mbox{ then } X_{i+1,j+1}\neq 1.\] It was noted in \cite[Section~7.2]{prorow} that $\pro$ traces from left to right through the columns of the boundary path matrix, swapping
each pair of entries in adjacent columns and the same row that result in a matrix still satisfying the condition above. 

Given $I \in J({\bf a} \times {\bf b} \times {\bf c})$ with boundary path matrix $\{X_{i,j}\}$, define $X_{\max}(I)$ to be the vector of length $a+b+c-1$ whose $j$th entry is $\max(X_{i,j})_{1\leq i\leq b}$.

\begin{lemma}
\label{prop:zeroscol_cycling}
Let $I \in J({\bf a} \times {\bf b} \times {\bf c})$. If $X_{\max}(I)=(x_1, x_2, \dots, x_{a+b+c-1})$, then $X_{\max}(\pro(I))$ is the cyclic shift $(x_2, \dots, x_{a+b+c-1}, x_1)$.
\end{lemma}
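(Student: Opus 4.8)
The plan is to exploit the explicit description of how $\pro$ acts on boundary path matrices, recalled just before the statement: $\pro$ sweeps left to right through the columns, and at each step swaps those pairs of entries in adjacent columns (same row) for which the swap preserves the defining inequality. I would track what happens to the entire first column of the matrix $\{X_{i,j}\}$ as this sweep progresses, since $X_{\max}(I)$ is the column-wise maximum and the claim is that this column-vector simply rotates.

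First I would set up notation for the intermediate matrices $X = X^{(0)}, X^{(1)}, \dots, X^{(a+b+c-1)}$, where $X^{(k)}$ is the result of having processed the first $k$ column-boundaries, so that $X^{(a+b+c-1)}$ (after the relabeling/wraparound convention that moves column $1$ to the end) is the boundary path matrix of $\pro(I)$. The key structural observation I would prove is a "no return" property: once the sweep has moved past column $j$, the entries in columns $1, \dots, j-1$ are frozen and, more importantly, the content of the old first column is carried along intact. Concretely, I would argue row by row that in row $i$ the entry $X_{i,1}$ either stays put or is pushed rightward through successive columns by the swaps, and that the column-max is preserved at each stage: I want to show $\max_i X^{(k)}_{i,1} $ is unaffected and, inductively, that after the full sweep the multiset of "what was in column $1$" reappears as the last column while columns $2, \dots, a+b+c-1$ of $I$ become columns $1, \dots, a+b+c-2$ of $\pro(I)$, each column's max unchanged. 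This reduces the whole statement to a purely local claim about the swap rule: a swap between columns $j$ and $j+1$ in row $i$ never changes the column-maxima of those two columns — which is clear, since a swap only exchanges a $0$ and a $1$ between two cells in the same two columns, and the relevant subtlety is only whether enough such swaps happen to move every $1$ that "should" move.

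The main obstacle, I expect, is precisely that last point: showing the column-max genuinely rotates rather than merely being bounded by a rotation. The inequality defining boundary path matrices is exactly what guarantees the swap is performed whenever it legally can be, and I would need to combine this with the row-sum condition (each row sums to $a$) and the padding-by-zeros structure (row $i$ has $i-1$ leading and $b-i$ trailing zeros) to pin down that a $1$ in column $1$ of some row really does get relocated — or, more cleanly, to observe that it suffices to prove the analogous statement for the boundary path of a single layer (a $2$-dimensional lattice projection / rc-poset of height $1$), since $X_{\max}$ is a column-max over layers and $\pro$ acts on the layers "in parallel" in the sense that the sweep commutes with taking maxima. So my preferred route is: (1) prove the one-layer version — that $\pro$ on a single boundary path cyclically shifts that binary string — essentially by unwinding the definition of $\pro$ as toggling left to right and noting that toggling a full rank/row of the height-$1$ poset just moves the boundary-path bit across; (2) observe $X_{\max}(\pro(I))_j = \max_i (\text{boundary path of layer } i \text{ of } \pro(I))$, and that $\pro$ on the plane partition restricts to $\pro$ on each layer's boundary path in a way compatible with the column-max; (3) conclude the cyclic shift for $X_{\max}$ from the cyclic shift for each layer.

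Alternatively, and perhaps more in the spirit of the preceding Lemma~\ref{prop:content_cycling}, one could give a direct bijective argument: a column $j$ of the boundary path matrix is "all zero" in a precise sense corresponding to a label not appearing, and the sweep of $\pro$ moves the data of column $1$ to the far right while shifting everything else left by one, so the max-vector is shifted accordingly; I would then just need to verify the boundary conditions at the two ends of the sweep (the wraparound) behave correctly, which is the only place the argument is not completely formal. I would present whichever of these is shorter after checking the details; the layer-by-layer reduction seems most robust.
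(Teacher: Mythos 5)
Your preferred route---the layer-by-layer reduction---has a genuine gap at step (2). The rows of the boundary path matrix do not evolve independently under $\pro$: whether the swap between columns $j$ and $j+1$ in row $i$ is performed depends, via the partial row sum condition coupling rows $i$ and $i+1$, on the entries of the \emph{adjacent} rows, so a $1$ in row $i$ can be blocked from moving by the configuration in rows $i \pm 1$. Consequently the boundary path of a single layer does not undergo its own one-layer promotion, and the conclusion you want in step (3)---that each layer's binary string cyclically shifts---is false: if every row of the $b\times(a+b+c-1)$ matrix shifted cyclically, then $\pro^{a+b+c-1}$ would be the identity, contradicting the fact (visible already in the introduction, where an orbit in $\inc{4\times 4}{12}$ has size $36$) that the order of $\pro$ on $J(\mathbf{a}\times\mathbf{b}\times\mathbf{c})$ generally exceeds $a+b+c-1$. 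The ``purely local claim'' in your first route is also false as stated: a swap moves a $1$ from one column into an adjacent one, and this certainly can change the maxima of those two columns, for instance when the $1$ being moved is the only $1$ in its column and it moves into an all-zero column. Only the vector of column maxima in aggregate behaves well, not the individual rows and not the individual swaps.

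The correct argument, which your final paragraph gestures at but does not carry out, is the one the paper gives: since $X_{\max}(I)_j=0$ exactly when column $j$ is all zeros, it suffices to track the all-zero columns through the left-to-right sweep. An all-zero column at position $i>1$ is untouched until the sweep reaches the pair $(i-1,i)$; at that point every entry of column $i-1$ swaps into it, because swapping into an empty column never violates the partial row sum condition, and column $i-1$ is then all zeros and remains so for the rest of the sweep. If $i=1$, the all-zero column is instead carried by successive swaps all the way to the last position. Thus the set of all-zero columns shifts cyclically one step to the left, which is exactly the claimed statement about $X_{\max}$. (To be fully rigorous one should also check that no new all-zero columns are created---for example by applying the same analysis to $\pro^{-1}$, which sweeps right to left---but that is the only remaining detail.)
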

\begin{proof}
For $i>1$, if column $i$ of the boundary path matrix is all zeros, then in the application of $\pro$, all of these entries swap with the entries of column $i-1$, since the condition on the partial row sums is not violated.

If $i=1$, the column of all zeros swaps all the way through the matrix, from the first column to the last column.

Thus, under $\pro$, a column of all zeros cyclically shifts to the left.
\end{proof}

The following instance of resonance follows directly from Lemma~\ref{prop:zeroscol_cycling}.
\begin{proposition}
\label{cor:resonance_of_pp_directly} 
$(J(\mathbf{a}\times\mathbf{b}\times\mathbf{c}), \langle \pro \rangle, X_{\max})$ exhibits resonance with frequency~$a+b+c-1$.
\end{proposition}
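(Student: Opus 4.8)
The plan is to deduce Proposition~\ref{cor:resonance_of_pp_directly} directly from Lemma~\ref{prop:zeroscol_cycling}, exactly in the way Theorem~\ref{cor:resonance_of_increasing_tableaux} was deduced from Lemma~\ref{prop:content_cycling}. First I would set up the relevant cyclic group action on the target: let $Y$ be the image $X_{\max}(J(\mathbf{a}\times\mathbf{b}\times\mathbf{c}))$, a set of binary vectors of length $a+b+c-1$, and let $\mathcal{C}_{a+b+c-1} = \langle \mathrm{rot} \rangle$ act on $Y$ by the cyclic shift $\mathrm{rot}(x_1,x_2,\dots,x_{a+b+c-1}) = (x_2,\dots,x_{a+b+c-1},x_1)$. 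I should check that $\mathrm{rot}$ genuinely maps $Y$ to $Y$ (which is immediate, since Lemma~\ref{prop:zeroscol_cycling} shows the cyclic shift of any $X_{\max}(I)$ is again of the form $X_{\max}(\pro(I))$, hence in $Y$) and that this action is nontrivial, which follows because some boundary path matrix has a column that is not all zeros, so not every vector in $Y$ is fixed by the shift — indeed, as the rows of a boundary path matrix each sum to $a \geq 1$, the all-ones pattern in appropriate positions prevents $X_{\max}(I)$ from being the zero vector, and a boundary path matrix with at least one nonzero entry but not every entry nonzero exists whenever the box is nondegenerate.

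Next I would verify the commuting square in the definition of resonance with $X=J(\mathbf{a}\times\mathbf{b}\times\mathbf{c})$, $G=\langle\pro\rangle$, $f=X_{\max}$, and $c=\mathrm{rot}$: for every $I \in J(\mathbf{a}\times\mathbf{b}\times\mathbf{c})$ we need $\mathrm{rot}(X_{\max}(I)) = X_{\max}(\pro(I))$. But this is precisely the content of Lemma~\ref{prop:zeroscol_cycling}. Finally I would note that $X_{\max}$ is a surjection onto its image $Y$ by definition, so all hypotheses of the definition of resonance are met, and conclude that $(J(\mathbf{a}\times\mathbf{b}\times\mathbf{c}), \langle\pro\rangle, X_{\max})$ exhibits resonance with frequency $a+b+c-1$.

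There is essentially no obstacle here: the proposition is a formal corollary of the lemma, and the only points requiring a sentence of care are (i) confirming $\mathrm{rot}$ acts nontrivially on $Y$ (needed so the frequency is not vacuously anything), and (ii) being careful about degenerate cases — if, say, $a=0$ or $b=0$ or $c=0$ the statement is trivial or vacuous, so I would implicitly assume $a,b,c \geq 1$. Since the paper has already done the real work in Lemma~\ref{prop:zeroscol_cycling}, the proof should be a single short paragraph: ``This follows directly from Lemma~\ref{prop:zeroscol_cycling}, taking the cyclic group $\mathcal{C}_{a+b+c-1}$ to act on the set of binary vectors $X_{\max}(I)$ by cyclic rotation.''
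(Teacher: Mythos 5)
Your proposal is correct and matches the paper exactly: the paper states this proposition as following ``directly from Lemma~\ref{prop:zeroscol_cycling}'' with no further argument, which is precisely your deduction (the commuting square is the content of that lemma, with $\mathcal{C}_{a+b+c-1}$ acting by cyclic shift on the image of $X_{\max}$). Your extra remarks on nontriviality of the rotation action and on degenerate box dimensions are reasonable points of care that the paper leaves implicit.
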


 Let $D$ be the conjugating toggle group element between rowmotion and promotion given in \cite[Theorem~5.4]{prorow}. By the equivariance of $\pro$ and $\row$ in~\cite{prorow}, we have the following statement of resonance on rowmotion, whose proof follows directly from Proposition~\ref{cor:resonance_of_pp_directly} and \cite[Theorem~5.4]{prorow}.

\begin{theorem}
\label{cor:resonance_of_pp_directly_row} 
$(J(\mathbf{a}\times\mathbf{b}\times\mathbf{c}), \langle \row \rangle, X_{\max}\circ D)$ exhibits resonance with frequency~$a+b+c-1$.
\end{theorem}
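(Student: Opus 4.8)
The plan is to deduce Theorem~\ref{cor:resonance_of_pp_directly_row} directly from Proposition~\ref{cor:resonance_of_pp_directly} by transporting the resonance structure across the conjugacy between $\pro$ and $\row$. Recall that the toggle group element $D$ from \cite[Theorem~5.4]{prorow} satisfies $\row = D \circ \pro \circ D^{-1}$ (equivalently, $D$ conjugates $\pro$ to $\row$), and $D$ is a bijection $J(\mathbf{a}\times\mathbf{b}\times\mathbf{c}) \to J(\mathbf{a}\times\mathbf{b}\times\mathbf{c})$. First I would set $f := X_{\max} \circ D$, noting that $f$ is a surjection onto the same target set as $X_{\max}$ (namely the set of attainable $X_{\max}$-vectors), since $D$ is a bijection and $X_{\max}$ is a surjection by Proposition~\ref{cor:resonance_of_pp_directly}; the cyclic group $\mathcal{C}_{a+b+c-1} = \langle \mathrm{rot}\rangle$ acting by cyclic shift on these vectors is the same as in Proposition~\ref{cor:resonance_of_pp_directly}.

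The key computation is then the following chain of equalities, for any $I \in J(\mathbf{a}\times\mathbf{b}\times\mathbf{c})$:
\[
f(\row(I)) = X_{\max}\big(D(\row(I))\big) = X_{\max}\big(D \circ D \circ \pro \circ D^{-1}(I)\big).
\]
Wait — more carefully, since $\row = D\pro D^{-1}$ we have $D^{-1}\row(I) = \pro(D^{-1}(I))$, hence $D(\row(I))$ is not quite what appears; instead I would write $f(\row(I)) = X_{\max}(D\,\row(I))$ and use $D\,\row = D\,(D\pro D^{-1})$... Let me instead organize it as: apply Proposition~\ref{cor:resonance_of_pp_directly} to the order ideal $D(I)$... no. The cleanest route: observe that if $D$ conjugates $\pro$ to $\row$ meaning $D \pro = \row D$ (as maps), then $X_{\max}(D(\pro(J))) = X_{\max}(\row(D(J)))$ for all $J$. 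Setting $J = D^{-1}(I)$ gives $X_{\max}(D\,\pro\,D^{-1}(I)) = X_{\max}(\row(I))$, i.e. $f(\row(I)) = X_{\max}(\pro(D^{-1}(I)))$. Hmm, that still has a stray $D^{-1}$. The correct bookkeeping is: we want $f(\row(I)) = \mathrm{rot}(f(I))$, i.e. $X_{\max}(D\,\row(I)) = \mathrm{rot}(X_{\max}(D(I)))$. Since $D\,\row = \pro\,D$ (the other conjugation convention), we get $X_{\max}(D\,\row(I)) = X_{\max}(\pro(D(I))) = \mathrm{rot}(X_{\max}(D(I)))$ by Proposition~\ref{cor:resonance_of_pp_directly} applied to the ideal $D(I)$. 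This is exactly the required commuting square. So the only subtlety is to confirm which conjugation convention \cite[Theorem~5.4]{prorow} establishes — whether $D$ intertwines $\pro$ and $\row$ as $D\row = \pro D$ or $D\pro = \row D$ — and to use it consistently; either way, after possibly replacing $D$ by $D^{-1}$ in the definition of $f$, the argument goes through identically because $X_{\max}$ only needs to be pulled back along one fixed bijection.

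The main (and essentially only) obstacle is this conjugation-convention check together with verifying that $X_{\max} \circ D$ still surjects onto the same cyclic-group-equipped set $Y$ as $X_{\max}$ — but this is automatic since precomposing a surjection with a bijection yields a surjection with the same image, and the $\mathcal{C}_{a+b+c-1}$-action on $Y$ is unchanged. I would then conclude by invoking the definition of resonance: the diagram with $g = \row$, $f = X_{\max}\circ D$, and $c = \mathrm{rot}$ commutes, and $\mathrm{rot}$ acts nontrivially on $Y$ (it has order $a+b+c-1 \geq 1$ and $Y$ contains vectors that are not fixed, e.g. whenever not all of $a,b,c$ force a constant vector), so $(J(\mathbf{a}\times\mathbf{b}\times\mathbf{c}), \langle\row\rangle, X_{\max}\circ D)$ exhibits resonance with frequency $a+b+c-1$. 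No genuinely hard step arises; the content is entirely in Proposition~\ref{cor:resonance_of_pp_directly} and the equivariance result of \cite{prorow}, and this theorem is the formal transport of that content.
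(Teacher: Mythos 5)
Your proposal is correct and is essentially the paper's own argument: the paper likewise defines $D$ as the conjugating toggle group element from \cite[Theorem~5.4]{prorow} and states that the theorem ``follows directly'' from Proposition~\ref{cor:resonance_of_pp_directly} together with that conjugacy, which is precisely the commuting-square bookkeeping $X_{\max}(D\,\row(I)) = X_{\max}(\pro(D(I))) = \mathrm{rot}(X_{\max}(D(I)))$ that you carry out. Your explicit attention to the conjugation convention (and the observation that at worst one replaces $D$ by $D^{-1}$) is a harmless refinement of what the paper leaves implicit.
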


This leads to the following corollary.
\begin{corollary}
\label{cor:weakest_CFdFlike_statement_pp}
Suppose $a+b+c-1$ is prime and $I \in J(\mathbf{a}\times\mathbf{b}\times\mathbf{c})$. Suppose there is a zero in $X_{\max}(I)$. Then the size of the promotion orbit of $I$ is a multiple of $a+b+c-1$.
\end{corollary}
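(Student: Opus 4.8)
The plan is to deduce this corollary directly from the resonance statement in Theorem~\ref{cor:resonance_of_pp_directly_row} together with Fact~\ref{fact:referee}, using the equivariance of $\pro$ and $\row$ from \cite{prorow} to pass between the two operators. First I would recall the setup: by Proposition~\ref{cor:resonance_of_pp_directly}, the triple $(J(\mathbf{a}\times\mathbf{b}\times\mathbf{c}),\langle\pro\rangle,X_{\max})$ exhibits resonance with frequency $\omega = a+b+c-1$, where the cyclic group $\mathcal{C}_\omega$ acts on the image by the shift ${\rm rot}$. Since $\omega$ is assumed prime, Fact~\ref{fact:referee} applies: for any $I$ with ${\rm rot}(X_{\max}(I)) \neq X_{\max}(I)$, the $\langle\pro\rangle$-orbit of $I$ has cardinality a multiple of $\omega$.

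Next I would observe that the hypothesis ``there is a zero in $X_{\max}(I)$'' guarantees exactly this non-fixedness. Indeed, each row of the boundary path matrix sums to $a \geq 1$, so $X_{\max}(I)$ is not the all-zeros vector, and it is not the all-ones vector since it has a zero entry; a binary vector that is neither all-zeros nor all-ones is never fixed by a cyclic shift (its number of $1$'s is strictly between $0$ and $\omega$, and a nontrivial cyclic shift of a non-constant binary sequence changes it). Hence ${\rm rot}(X_{\max}(I)) \neq X_{\max}(I)$, and the $\pro$-orbit of $I$ is a multiple of $a+b+c-1$.

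Finally I would transfer this from $\pro$ to $\row$. By Theorem~\ref{thm:StWiProrow} (or concretely by the conjugating toggle group element $D$ from \cite[Theorem~5.4]{prorow}), $J(\mathbf{a}\times\mathbf{b}\times\mathbf{c})$ under $\pro$ is in equivariant bijection with $J(\mathbf{a}\times\mathbf{b}\times\mathbf{c})$ under $\row$; in particular $\row$ and $\pro$ have the same multiset of orbit cardinalities, with $D$ carrying a $\row$-orbit to the corresponding $\pro$-orbit. So the statement for $\row$ follows at once: given $I \in J(\mathbf{a}\times\mathbf{b}\times\mathbf{c})$ whose image under $X_{\max}\circ D$ has a zero — equivalently, whose corresponding plane partition under $\pro$ has a zero in its $X_{\max}$ vector — the argument above shows the orbit size is a multiple of $a+b+c-1$. (One should state the corollary's hypothesis carefully in terms of $X_{\max}\circ D$ to match Theorem~\ref{cor:resonance_of_pp_directly_row}, or equivalently note that having a zero is a property preserved under the bijection $D$.)

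I do not anticipate a serious obstacle here; the only point requiring a moment's care is the elementary lemma that a non-constant binary word is not fixed by any nontrivial cyclic shift, which is what converts ``has a zero'' (plus ``row sums are positive'') into the hypothesis ``$c \cdot f(x) \neq f(x)$'' needed to invoke Fact~\ref{fact:referee}. Everything else is a direct citation of the resonance results and the $\pro$–$\row$ equivariance already established.
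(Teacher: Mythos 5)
Your proposal is correct and follows essentially the same route as the paper, whose entire proof is the citation ``By Theorem~\ref{cor:resonance_of_pp_directly_row} and Fact~\ref{fact:referee}''; you simply make explicit the step the paper leaves implicit, namely that a binary word with both a $0$ and a $1$ (the row sums being $a\geq 1$ forces a $1$) cannot be fixed by the cyclic shift, so Fact~\ref{fact:referee} applies. The only superfluous part is your final paragraph transferring to $\row$: the corollary as stated concerns the \emph{promotion} orbit, so Proposition~\ref{cor:resonance_of_pp_directly} already suffices and no conjugation by $D$ is needed.
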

\begin{proof}
By Theorem~\ref{cor:resonance_of_pp_directly_row} and Fact~\ref{fact:referee}.
\end{proof}

Using Corollary~\ref{cor:weakest_CFdFlike_statement_pp}, we have a new proof of Theorem~\ref{thm:cfdf} of P.~Cameron--D.~Fon-der-Flaass.

\begin{proof}[Proof of Theorem~\ref{thm:cfdf}] If  $a+b+c-1$ is prime and $c> a b - a - b + 1$, then there are a total of $a b$ ones in the boundary path matrix, but a total of $a+b+c-1 > a b$ columns in the matrix, so there must be a column of all zeros. Thus, there is a zero in $X_{\max}(I)$ for any plane partition $I$ in the $a\times b\times c$ box, and the promotion orbit is a multiple of $a+b+c-1$ by Corollary~\ref{cor:weakest_CFdFlike_statement_pp}. Then by Theorem~\ref{thm:StWiProrow}, the orbits of rowmotion are also multiples of $a+b+c-1$.
\end{proof}

 P.~Cameron--D.~Fon-der-Flaass's original proof of Theorem~\ref{thm:cfdf} is similar, though more complicated, since it analyzes rowmotion directly rather than conjugating to promotion. 

\subsection{$n$-dimensional lattice projections}
\label{subsec:3Dprorow}

In this and the next 
subsections, we adapt the proof of the conjugacy of promotion and rowmotion from~\cite{prorow} to give a generalization in the setting of \emph{$n$-dimensional lattice projections}, which we introduce in Definition~\ref{def:lattice_proj}. (This new perspective includes the original theorem as the case $n=2$.)
We prove, in Theorem~\ref{thm:3Dprorow}, the equivariance of the $2^{n-1}$ toggle group actions given in Definition~\ref{def:3DPro}.


\begin{definition}
We say that a poset $P$ is {\bf ranked} if it admits a rank function $\rk \colon P \to \mathbb{Z}$ satsifying $\rk(y)=\rk(x)+1$ when $y$ covers $x$.
\end{definition}

\begin{definition}
\label{def:lattice_proj}
We say that an \textbf{($n$-dimensional) lattice projection} of a ranked poset $P$ is an order and rank preserving map $\pi:P\rightarrow \mathbb{Z}^n$, where the rank function on $\mathbb{Z}^n$ is the sum of the coordinates and $x\leq y$ in $\mathbb{Z}^n$ if and only if the componentwise difference $y-x$ is in $(\mathbb{Z}_{\geq 0})^n$.
\end{definition}

In light of Remark~\ref{remark:commute}, the key feature of $\pi$ is that it preserves cover relations. That is, if $y$ covers $x$ in $P$, then $\pi(y)$ covers $\pi(x)$ in $\mathbb{Z}^n$. However, since $\mathbb{Z}^n$ is ranked, $\pi$ being cover-relation preserving would make $\rk\circ\pi$ a rank function for $P$. And if $P$ is ranked, then a map $\pi:P\rightarrow \mathbb{Z}^n$ being cover-relation preserving is equivalent to it being order and rank preserving (up to a shift of the rank functions).

In~\cite{prorow}, the definition of an rc-poset was a poset that had a 2-dimensional lattice projection (albeit to a slightly different lattice). However, E.~Sawin noted that every ranked poset $P$ with rank function $\rho$ has such an embedding given by $\pi(x)=(\rho(x),0)$ for $x\in P$~\cite{esawin}. Similarly, any poset $P$ with a lattice projection $\pi$ has a rank function given by the sum of the coordinates in $\pi(x)$ for $x\in P$.

Additionally, a ranked poset may have multiple distinct projections. For example, in Figure~\ref{fig:cube}, we have the boolean lattice on three elements, which we may think of as a product of three chains of length 2. In Figure~\ref{fig:3dlattproj}, we have the standard three-dimensional lattice projection of this poset obtained by viewing it as a product of three chains. In Figure~\ref{fig:rcprojection}, we show two different two-dimensional lattice projections of this poset. In the projection on the right,  
we assign every element of the same rank to the same point, but instead of doing so along the $x$-axis as in the previous paragraph, we do this diagonally in a zig-zag pattern.
Therefore, instead of considering rc-posets, we consider any ranked poset, but with respect to a given lattice projection.

\begin{figure}[hbtp]
\begin{tikzpicture}
\node [below] at (0,0) {a};
\node [left] at (-1,1) {b};
\node [right] at (0,1) {c};
\node [right] at (1,1) {d};
\node [left] at (-1,2) {e};
\node [left] at (0,2) {f};
\node [right] at (1,2) {g};
\node [above] at (0,3) {h};
\draw (0,0) -- (1,1) -- (1,2) -- (0,3) -- (-1,2) -- (-1,1) -- (0,0);
\draw (-1,2) -- (0,1) -- (1,2);
\draw (-1,1) -- (0,2) -- (1,1);
\draw (0,0) -- (0,1);
\draw (0,3) -- (0,2);
\end{tikzpicture}
\caption{A product of three chains poset.}
\label{fig:cube}
\end{figure}
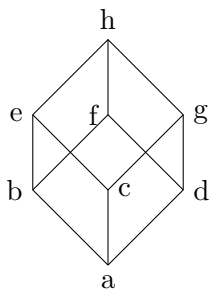

\begin{figure}[hbtp]
\begin{tikzpicture}[scale=1.75]
\node[inner sep=1pt,label=left:a,circle,draw,fill] at (0,0,0)  {};
\node[inner sep=1pt,label=left:b,circle,draw,fill] at (0,0,1) {};
\node[inner sep=1pt,label=left:c,circle,draw,fill] at (0,1,0) {};
\node[inner sep=1pt,label=above right:d,circle,draw,fill] at (1,0,0) {};
\node[inner sep=1pt,label=left:e,circle,draw,fill] at (0,1,1) {};
\node[inner sep=1pt,label=below:f,circle,draw,fill] at (1,0,1) {};
\node[inner sep=1pt,label=above:g,circle,draw,fill] at (1,1,0) {};
\node[inner sep=1pt,label=below left:h,circle,draw,fill] at (1,1,1) {};
\draw (0,0,0) -- (0,0,1) -- (0,1,1) -- (1,1,1);
\draw (0,0,0) -- (1,0,0) -- (1,1,0) -- (1,1,1);
\draw (0,0,0) -- (0,1,0) -- (1,1,0) -- (1,1,1);
\draw (1,0,0) -- (1,0,1) -- (1,0,0);
\draw (0,0,1) -- (1,0,1) -- (1,1,1);
\draw (0,1,0) -- (0,1,1);
\draw [dotted, ->] (0,0,1) -- (0,0,1.5) node[anchor=north] {$x$};
\draw [dotted, ->] (0,1,0) -- (0,1.5,0) node[anchor=west] {$y$};
\draw [dotted, ->] (1,0,0) -- (1.5,0,0) node [anchor=west] {$z$};
\end{tikzpicture}
\caption{The standard three-dimensional lattice projection of the poset of Figure~\ref{fig:cube}.}
\label{fig:3dlattproj}
\end{figure}
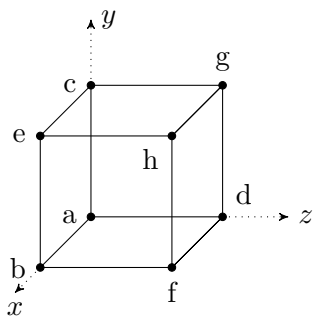

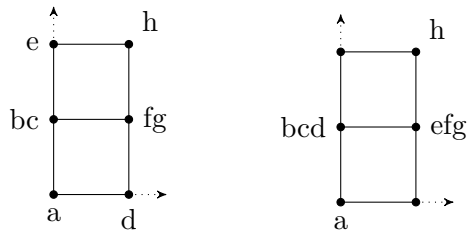
\begin{figure}[htbp]
\begin{tikzpicture}
\draw (0,0) grid (1,2);
\node[inner sep=1pt,circle,draw,fill,label=below: a] at (0,0) {};
\node[inner sep=1pt,circle,draw,fill,label=left: bc] at (0,1) {};
\node[inner sep=1pt,circle,draw,fill,label=below: d] at (1,0) {};
\node[inner sep=1pt,circle,draw,fill,label=right: fg] at (1,1) {};
\node[inner sep=1pt,circle,draw,fill,label=left: e] at (0,2) {};
\node[inner sep=1pt,circle,draw,fill,label=above right: h] at (1,2) {};
\draw (1,0) -- (1.5,0) [dotted,->];
\draw (0,2) -- (0,2.5) [dotted,->];
\end{tikzpicture}
\hspace{1cm}
\begin{tikzpicture}
\draw (0,0) grid (1,2);
\node[inner sep=1pt,circle,draw,fill,label=below: a] at (0,0) {};
\node[inner sep=1pt,circle,draw,fill,label=left: bcd] at (0,1) {};
\node[inner sep=1pt,circle,draw,fill] at (1,0) {};
\node[inner sep=1pt,circle,draw,fill,label=right: efg] at (1,1) {};
\node[inner sep=1pt,circle,draw,fill] at (0,2) {};
\node[inner sep=1pt,circle,draw,fill,label=above right: h] at (1,2) {};
\draw (1,0) -- (1.5,0) [dotted,->];
\draw (0,2) -- (0,2.5) [dotted,->];
\end{tikzpicture}
\caption{Two distinct two-dimensional lattice projections of the poset of Figure~\ref{fig:cube}.}
\label{fig:rcprojection}
\end{figure}


\subsection{Promotion via affine hyperplane toggles}
\label{subsec:provpi}

We now define a toggling order on our poset with respect an $n$-dimensional lattice projection, and with respect to a distinguished direction.

\begin{definition}
\label{def:3DPro}
Let $P$ be a poset with an $n$-dimensional lattice projection $\pi$, and let $v=(v_1,v_2,v_3,\ldots, v_{n})$, where $v_j\in\{\pm 1\}$. Let $T_{\pi,v}^i$ be the product of toggles $t_x$ for all elements $x$ of $P$ that lie on the affine hyperplane $\langle \pi(x),v\rangle=i$. If there is no such $x$, then this is the empty product, considered to be the identity. Then define \textbf{promotion with respect to $\pi$ and $v$} as the toggle product
$\pro_{\pi,v}=\ldots T_{\pi,v}^{-2}T_{\pi,v}^{-1}T_{\pi,v}^0T_{\pi,v}^{1}T_{\pi,v}^{2}\ldots$.
\end{definition}

See Figure~\ref{fig:hyperplanes} for an example.

\smallskip
\begin{remark}
Note that $\pro_{\pi,-v}=(\pro_{\pi,v})^{-1}$, so we will generally only consider distinguished vectors with $v_1=1$, as all promotion operators are either of this form, or the inverse of something of this form.
\end{remark}

\begin{lemma}
\label{lem:hyperplane}
Two elements of a poset that lie on the same affine hyperplane $\langle \pi(x),v\rangle=i$ cannot be part of a covering relation, so by Remark~\ref{remark:commute}, the operator $T_{\pi,v}^i$ is well-defined and $(T_{\pi,v}^i)^2=1$.
\end{lemma}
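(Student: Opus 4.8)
The plan is to reduce the whole statement to one elementary observation about cover relations in $\mathbb{Z}^n$. First I would record that, with the componentwise order and the rank function $\rk$ given by the sum of coordinates, an element $b$ covers $a$ in $\mathbb{Z}^n$ if and only if $b - a = e_k$ for some standard basis vector $e_k$: if $b$ covers $a$ then $b-a \in (\mathbb{Z}_{\geq 0})^n$ is nonzero, and if its coordinate sum were $\geq 2$ (because two coordinates are positive, or one coordinate is $\geq 2$) then $a + e_k$ for a suitable $k$ would lie strictly between $a$ and $b$; conversely nothing lies strictly between $a$ and $a+e_k$. (Alternatively, the paragraph following Definition~\ref{def:lattice_proj} already asserts that $\pi$ preserves cover relations, so one may instead start from ``$\pi(y)$ covers $\pi(x)$ in $\mathbb{Z}^n$'' directly.)

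Next I would invoke that $\pi$ is order- and rank-preserving. Suppose $x, y \in P$ lie on a common affine hyperplane, i.e.\ $\langle \pi(x), v\rangle = \langle \pi(y), v\rangle = i$, and suppose for contradiction that, say, $y$ covers $x$ in $P$. Rank-preservation forces the coordinate sum of $\pi(y)$ to exceed that of $\pi(x)$ by exactly $1$, while order-preservation gives $\pi(y) - \pi(x) \in (\mathbb{Z}_{\geq 0})^n$; a nonnegative integer vector with coordinate sum $1$ is a standard basis vector, so $\pi(y) - \pi(x) = e_k$ for exactly one index $k$. Pairing with $v$,
\[
0 \;=\; \langle \pi(y), v\rangle - \langle \pi(x), v\rangle \;=\; \langle e_k, v\rangle \;=\; v_k,
\]
contradicting $v_k \in \{\pm 1\}$. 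Hence no two elements of $P$ on a common affine hyperplane $\langle \pi(\cdot), v\rangle = i$ are related by a cover.

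The remaining conclusions are then immediate. Since the toggles $\{ t_x : \langle \pi(x), v \rangle = i \}$ are pairwise associated to elements neither of which covers the other, Remark~\ref{remark:commute} says they pairwise commute; therefore their product $T_{\pi,v}^i$ is independent of the order of multiplication and is well-defined. Finally, each $t_x$ is an involution, and commuting involutions have $(\prod_x t_x)^2 = \prod_x t_x^2 = 1$, so $(T_{\pi,v}^i)^2 = 1$. The main obstacle here is minimal: this is a setup lemma, and the only point requiring a moment's care is the characterization of covers in $\mathbb{Z}^n$ (and noticing that it is precisely $v_k \neq 0$, guaranteed by $v_k = \pm 1$, that makes the two hyperplane values differ).
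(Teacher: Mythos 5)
Your proof is correct and follows essentially the same route as the paper's: the paper likewise argues that if $y$ covers $x$ and both lie on the hyperplane, then $\langle \pi(y)-\pi(x), v\rangle = 0$ while $\pi(y)-\pi(x) = e_k$ forces $\langle e_k, v\rangle = \pm 1$, a contradiction. You merely supply slightly more detail (justifying $\pi(y)-\pi(x)=e_k$ via rank- and order-preservation, and spelling out the commuting-involutions conclusion) that the paper leaves implicit.
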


\begin{proof}
Assume that $y$ covers $x$, and they both lie on the same affine hyperplane ($\langle \pi(x),v\rangle=\langle \pi(y),v\rangle=i$). Then $\langle \pi(y),v\rangle - \langle \pi(x),v\rangle=\langle \pi(y)-\pi(x),v\rangle=0$. But since $y$ covers $x$, $\pi(y)-\pi(x)=e_i$ for some $i$. And since $v$ has all coordinates $\pm 1$, then $\langle e_i,v\rangle=\pm 1$, a contradiction.
\end{proof}

\begin{figure}[htbp]
\begin{tikzpicture}[scale=.8]
\begin{scope}[xshift=10cm,yshift=-7cm]
\planepartitionQ{{2,2}}
\draw[->,dotted] (0,0) -- ($1.5*(\xx,\xxx)$) node[anchor=north] {$y$};
\draw[->,dotted] (0,0) -- ($2.5*(\yy,\yyy)$) node[anchor=north] {$x$};
\draw[->,dotted] (0,0) -- ($2.75*(\zz,\zzz)$) node[anchor=west] {$z$};
 \coordinate (a1) at ($0*(\xx,\xxx)+0*(\yy,\yyy)+2*(\zz,\zzz)$);
 \node[fill=red,draw,circle,inner sep=.5ex] at (a1) {};
 \node at (0,-1) {$x+y-z=-2$};
\end{scope}

\begin{scope}[xshift=5cm,yshift=-7cm]
\planepartitionQ{{2,2}}
\draw[->,dotted] (0,0) -- ($1.5*(\xx,\xxx)$) node[anchor=north] {$y$};
\draw[->,dotted] (0,0) -- ($2.5*(\yy,\yyy)$) node[anchor=north] {$x$};
\draw[->,dotted] (0,0) -- ($2.75*(\zz,\zzz)$) node[anchor=west] {$z$};
\coordinate (a2) at ($0*(\xx,\xxx)+1*(\yy,\yyy)+2*(\zz,\zzz)$);
\coordinate (b2) at ($1*(\xx,\xxx)+0*(\yy,\yyy)+2*(\zz,\zzz)$);
\coordinate (c2) at ($0*(\xx,\xxx)+0*(\yy,\yyy)+1*(\zz,\zzz)$);
\node[fill=red,draw,circle,inner sep=.5ex] at (a2) {};
\node[fill=red,draw,circle,inner sep=.5ex] at (b2) {};
\node[fill=red,draw,circle,inner sep=.5ex] at (c2) {};
\draw[fill=light-gray,opacity=.4] (a2) -- (b2) -- (c2) -- cycle;
\node at (0,-1) {$x+y-z=-1$};
\end{scope}

\begin{scope}[xshift=0cm,yshift=-7cm]
\planepartitionQ{{2,2}}
\draw[->,dotted] (0,0) -- ($1.5*(\xx,\xxx)$) node[anchor=north] {$y$};
\draw[->,dotted] (0,0) -- ($2.5*(\yy,\yyy)$) node[anchor=north] {$x$};
\draw[->,dotted] (0,0) -- ($2.75*(\zz,\zzz)$) node[anchor=west] {$z$};
\coordinate (a3) at ($0*(\xx,\xxx)+1*(\yy,\yyy)+1*(\zz,\zzz)$);
\coordinate (b3) at ($1*(\xx,\xxx)+0*(\yy,\yyy)+1*(\zz,\zzz)$);
\coordinate (c3) at ($0*(\xx,\xxx)+0*(\yy,\yyy)+0*(\zz,\zzz)$);
\coordinate (d3) at ($1*(\xx,\xxx)+1*(\yy,\yyy)+2*(\zz,\zzz)$);
\coordinate (e3) at ($0*(\xx,\xxx)+2*(\yy,\yyy)+2*(\zz,\zzz)$);
\node[fill=red,draw,circle,inner sep=.5ex] at (a3) {};
\node[fill=red,draw,circle,inner sep=.5ex] at (b3) {};
\node[fill=red,draw,circle,inner sep=.5ex] at (c3) {};
\node[fill=red,draw,circle,inner sep=.5ex] at (d3) {};
\node[fill=red,draw,circle,inner sep=.5ex] at (e3) {};
\draw[fill=light-gray,opacity=.4] (a3) -- (c3) -- (b3) -- (d3) -- (e3) -- cycle;
\node at (0,-1) {$x+y-z=0$};
\end{scope}

\begin{scope}[xshift=10cm,yshift=0cm]
\planepartitionQ{{2,2}}
\draw[->,dotted] (0,0) -- ($1.5*(\xx,\xxx)$) node[anchor=north] {$y$};
\draw[->,dotted] (0,0) -- ($2.5*(\yy,\yyy)$) node[anchor=north] {$x$};
\draw[->,dotted] (0,0) -- ($2.75*(\zz,\zzz)$) node[anchor=west] {$z$};
\coordinate (a4) at ($0*(\xx,\xxx)+1*(\yy,\yyy)+0*(\zz,\zzz)$);
\coordinate (b4) at ($1*(\xx,\xxx)+1*(\yy,\yyy)+1*(\zz,\zzz)$);
\coordinate (c4) at ($1*(\xx,\xxx)+0*(\yy,\yyy)+0*(\zz,\zzz)$);
\coordinate (d4) at ($1*(\xx,\xxx)+2*(\yy,\yyy)+2*(\zz,\zzz)$);
\coordinate (e4) at ($0*(\xx,\xxx)+2*(\yy,\yyy)+1*(\zz,\zzz)$);
\node[fill=red,draw,circle,inner sep=.5ex] at (a4) {};
\node[fill=red,draw,circle,inner sep=.5ex] at (b4) {};
\node[fill=red,draw,circle,inner sep=.5ex] at (c4) {};
\node[fill=red,draw,circle,inner sep=.5ex] at (d4) {};
\node[fill=red,draw,circle,inner sep=.5ex] at (e4) {};
\draw[fill=light-gray,opacity=.4] (a4) -- (c4) -- (b4) -- (d4) -- (e4) -- cycle;
\node at (0,-1) {$x+y-z=1$};
\end{scope}

\begin{scope}[xshift=5cm,yshift=0cm]
\planepartitionQ{{2,2}}
\draw[->,dotted] (0,0) -- ($1.5*(\xx,\xxx)$) node[anchor=north] {$y$};
\draw[->,dotted] (0,0) -- ($2.5*(\yy,\yyy)$) node[anchor=north] {$x$};
\draw[->,dotted] (0,0) -- ($2.75*(\zz,\zzz)$) node[anchor=west] {$z$};
\coordinate (a5) at ($0*(\xx,\xxx)+2*(\yy,\yyy)+0*(\zz,\zzz)$);
\coordinate (b5) at ($1*(\xx,\xxx)+1*(\yy,\yyy)+0*(\zz,\zzz)$);
\coordinate (c5) at ($1*(\xx,\xxx)+2*(\yy,\yyy)+1*(\zz,\zzz)$);
\node[fill=red,draw,circle,inner sep=.5ex] at (a5) {};
\node[fill=red,draw,circle,inner sep=.5ex] at (b5) {};
\node[fill=red,draw,circle,inner sep=.5ex] at (c5) {};
\draw[fill=light-gray,opacity=.4] (a5) -- (c5) -- (b5) -- cycle;
\node at (0,-1) {$x+y-z=2$};

\end{scope}

\begin{scope}[xshift=0cm,yshift=0cm]
\planepartitionQ{{2,2}}
\draw[->,dotted] (0,0) -- ($1.5*(\xx,\xxx)$) node[anchor=north] {$y$};
\draw[->,dotted] (0,0) -- ($2.5*(\yy,\yyy)$) node[anchor=north] {$x$};
\draw[->,dotted] (0,0) -- ($2.75*(\zz,\zzz)$) node[anchor=west] {$z$};
\coordinate (a6) at ($1*(\xx,\xxx)+2*(\yy,\yyy)+0*(\zz,\zzz)$);
\node[fill=red,draw,circle,inner sep=.5ex] at (a6) {};
\node at (0,-1) {$x+y-z=3$};
\end{scope}
\end{tikzpicture}
\caption{The affine hyperplane toggles 
corresponding to $\pro_{{\rm id},(1,1,-1)}$ for the identity three-dimensional lattice projection of the poset $J(\mathbf{3}\times\mathbf{2}\times\mathbf{3})$}
\label{fig:hyperplanes}
\end{figure}
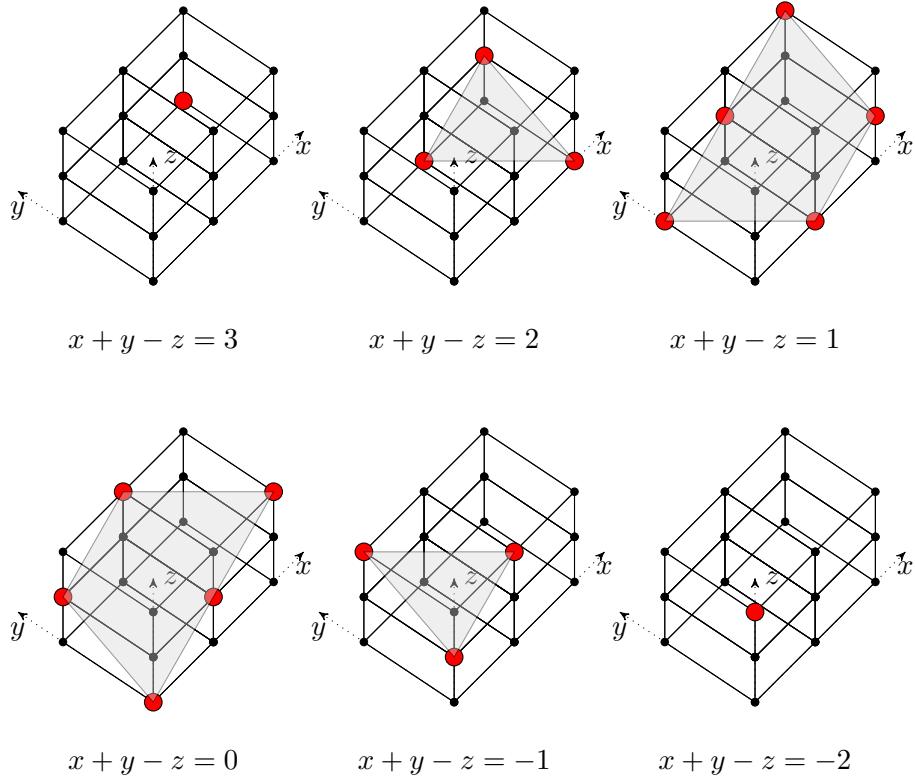

For ease of notation, we may suppress explicitly listing the lattice projection map $\pi$ or the direction $v$ when referring to the generalized promotion operator, if it is clear from context.
Note that for a finite poset $P$,
$T_{\pi,v}^i$ will be the identity operator for all but finitely many $i$.


\begin{remark}
To compare with the notion of promotion and rowmotion given in~\cite{prorow}, for a given 2-dimensional lattice projection $\pi$ of a finite poset $P$, rowmotion corresponds to $\pro_{\pi,(1,1)}$, and promotion corresponds to $\pro_{\pi,(1,-1)}$.
\end{remark}

\begin{proposition}
For any finite ranked poset $P$ and lattice projection $\pi$, $\pro_{\pi,(1,1,\ldots,1)} = \row$.
\end{proposition}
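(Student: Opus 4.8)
The plan is to reduce the claim directly to Theorem~\ref{thm:row_tog}, which characterizes $\row$ as a particular product of toggles; the only thing that must be checked with care is the bookkeeping of which toggles $\pro_{\pi,(1,1,\ldots,1)}$ applies and in what order.

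First I would observe that when $v=(1,1,\ldots,1)$, the functional $\langle \pi(x),v\rangle$ is exactly the sum of the coordinates of $\pi(x)$, which by the discussion following Definition~\ref{def:lattice_proj} is a rank function $\rho := \rk\circ\pi$ on $P$. Consequently the affine hyperplane $\langle\pi(x),v\rangle = i$ collects precisely the elements of rank $i$, so $T^i_{\pi,v}$ is the product of the toggles $t_x$ over all $x\in P$ with $\rho(x)=i$. By Lemma~\ref{lem:hyperplane} (equivalently Remark~\ref{remark:commute}), any two such elements are incomparable, and in particular neither covers the other, so these toggles pairwise commute and $T^i_{\pi,v}$ is the well-defined operation ``toggle every rank-$i$ element, in any order.''

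Next I would unwind the definition $\pro_{\pi,v} = \cdots T^{-1}_{\pi,v}\,T^0_{\pi,v}\,T^1_{\pi,v}\cdots$. Since $P$ is finite, only finitely many $T^i_{\pi,v}$ are nontrivial, and reading the composition right-to-left it applies the top rank first, then the next rank down, and so on to the bottom rank. I would then produce a linear extension $\ell$ of $P$ by listing the elements in order of increasing $\rho$, breaking ties within a rank arbitrarily; this is a valid linear extension precisely because $\rho$ is order-preserving. Toggling $P$ in the reverse order of $\ell$ means toggling the highest-rank elements first (in some order), then the next rank, and so on; since the intra-rank toggles commute, this product equals $\pro_{\pi,(1,1,\ldots,1)}$.

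Finally, Theorem~\ref{thm:row_tog} states that toggling the elements of $P$ in the reverse order of any linear extension yields $\row$ (and, for a ranked poset, that this coincides with toggling the ranks from top to bottom). Applying it to the linear extension $\ell$ gives $\pro_{\pi,(1,1,\ldots,1)} = \row$, as desired. The only real subtlety, and a mild one, is confirming that the composition convention in Definition~\ref{def:3DPro} makes the highest-rank hyperplane act first, matching the ``top to bottom'' prescription of Theorem~\ref{thm:row_tog}; everything else is just invoking the commutation of intra-rank toggles.
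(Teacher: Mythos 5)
Your proposal is correct and follows exactly the same route as the paper, which simply observes that $\pro_{\pi,(1,1,\ldots,1)}$ sweeps through $P$ from top to bottom (i.e., in the reverse order of a linear extension) and invokes Theorem~\ref{thm:row_tog}. Your version just makes explicit the bookkeeping (that the hyperplanes for $v=(1,1,\ldots,1)$ are the rank levels, that intra-rank toggles commute, and that the rightmost factor in Definition~\ref{def:3DPro} acts first) that the paper leaves implicit.
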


\begin{proof}
$\pro_{\pi,(1,1,\ldots,1)}$ sweeps through $P$ from top to bottom (in the reverse order of a linear extension), so by Theorem~\ref{thm:row_tog}, this is rowmotion.
\end{proof}

We give some further definitions and lemmas, in order to state and prove Theorem~\ref{thm:3Dprorow} in full generality.

\begin{definition}
Let $P$ be a poset, and let $\pi$, $v$, and $T_{\pi,v}^i$ be as in Definition~\ref{def:3DPro}. Define the \textbf{support} of $(P,\pi,v)$, denoted  $\mathrm{supp}(P,\pi,v)$, to be the smallest interval $[a,b]\subseteq\mathbb{Z}$ such that $T_{\pi,v}^i$ is the identity operator for all $i\in\mathbb{Z}\setminus[a,b]$. 
\end{definition}

\begin{definition}
If $(P,\pi,v)$ has finite support, that is,
$\mathrm{supp}(P,\pi,v)=[a,b]\subset\mathbb{Z}$, let $\sigma:[a,b]\rightarrow[a,b]$ be a bijection. Then define \textbf{promotion with respect to $P$, $\pi$, $v$, and $\sigma$} as the following product of hyperplane-toggles:
\begin{center}
$\pro_{\pi,v}^{\sigma}=T_{\pi,v}^{\sigma(a)}T_{\pi,v}^{\sigma(a+1)}\ldots T_{\pi,v}^{\sigma(b-1)}T_{\pi,v}^{\sigma(b)}$.
\end{center}
\end{definition}

We will use the following toggle group element in the proof of Theorem~\ref{thm:3Dprorow}.
\begin{definition}
\label{def:gyr}
For a poset $P$, define the \textbf{parity} of $p\in P$ as even (resp. odd) if the parity of $\rk(p)$ is even (resp. odd).
Define \textbf{gyration} $\mathrm{Gyr}$ as the toggle group element which first toggles all $p\in P$ with even parity, then all $p$ with odd parity.
\end{definition}

\begin{remark}
Given a lattice projection $\pi$, the rank of $p$ is the same as the rank of $\pi(p)=(x_1,x_2,\ldots x_n)$, which is $\sum_{i}x_i$. Since all the coordinates in $v$ are $\pm 1$, the parity of $\pi(p)$ will be the same as the parity of $\langle \pi(p),v\rangle$. Thus, all elements lying on the same affine hyperplane with respect to $v$ will have the same parity.
\end{remark}

\begin{lemma}
\label{lem:gyr}
If $(P,\pi,v)$ has finite support $[a,b]$, then for any bijection $\sigma:[a,b]\rightarrow[a,b]$ such that $\sigma(k)$ is odd if $k<\frac{a+b}{2}$ and even if $k>\frac{a+b}{2}$, we have $\pro_{\pi,v}^{\sigma}=\mathrm{Gyr}$.
\end{lemma}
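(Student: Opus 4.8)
The plan is to show that, under the hypotheses on $\sigma$, the toggle product $\pro_{\pi,v}^\sigma$ consists of applying the hyperplanes $T_{\pi,v}^i$ in an order where all odd-parity hyperplanes come before all even-parity ones (or in whatever order $\sigma$ dictates among hyperplanes of the same parity), and then to argue that this rearranges to gyration. The two key facts I would invoke are: first, by the Remark preceding the lemma, all elements of $P$ lying on a single affine hyperplane $\langle \pi(x),v\rangle = i$ have the same parity, namely the parity of $i$; so the toggles comprising $T_{\pi,v}^i$ are all ``even toggles'' or all ``odd toggles.'' Second, $T_{\pi,v}^i$ is itself a well-defined involution (Lemma~\ref{lem:hyperplane}), and more importantly toggles on the same hyperplane commute with each other.

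The heart of the argument is a commutation claim: if $i$ and $j$ are integers of the same parity, then $T_{\pi,v}^i$ and $T_{\pi,v}^j$ commute. This is because any $x$ with $\langle \pi(x),v\rangle = i$ and any $y$ with $\langle \pi(y),v\rangle = j$ cannot be in a covering relation — a covering relation changes $\langle \pi(\cdot),v\rangle$ by exactly $\pm 1$ (as in the proof of Lemma~\ref{lem:hyperplane}), hence changes parity, so $|i-j|$ would have to be odd. By Remark~\ref{remark:commute}, $t_x$ and $t_y$ commute, and since this holds for all such pairs, $T_{\pi,v}^i$ and $T_{\pi,v}^j$ commute. Consequently, in the product $\pro_{\pi,v}^\sigma = T_{\pi,v}^{\sigma(a)} \cdots T_{\pi,v}^{\sigma(b)}$, I can freely permute adjacent factors of the same parity. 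The hypothesis on $\sigma$ guarantees that $\sigma$ sends the ``first half'' of $[a,b]$ (indices $k < \frac{a+b}{2}$) to odd values and the ``second half'' to even values; since $\sigma$ is a bijection of $[a,b]$, this forces the odd values of $[a,b]$ to be exactly $\{\sigma(k) : k < \frac{a+b}{2}\}$ and the even values to be $\{\sigma(k): k > \frac{a+b}{2}\}$ (with the middle index, if $a+b$ is odd, there is no integer $k = \frac{a+b}{2}$, and if $a+b$ is even one checks the count works out — this is a small bookkeeping point). Therefore the product $\pro_{\pi,v}^\sigma$, read left to right, applies all the odd-parity hyperplane toggles first (in some order) and then all the even-parity ones (in some order).

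Next I would observe that the product of $T_{\pi,v}^i$ over all odd $i \in [a,b]$, taken in any order, equals the product of $t_p$ over all odd-parity $p \in P$: the hyperplanes with odd $i$ partition the odd-parity elements of $P$, the toggles within each hyperplane commute, and toggles across distinct odd hyperplanes commute by the commutation claim above; so the whole thing is a well-defined involution equal to ``toggle all odd-parity elements.'' Likewise the product over even $i \in [a,b]$ equals ``toggle all even-parity elements.'' Hence $\pro_{\pi,v}^\sigma = (\text{toggle all odd } p)(\text{toggle all even } p)$. Comparing with Definition~\ref{def:gyr}, $\mathrm{Gyr}$ first toggles all even-parity $p$, then all odd-parity $p$; but since toggling all even elements is an involution and toggling all odd elements is an involution, and — crucially — each of these two ``super-toggles'' is self-inverse, the two products $(\text{even})(\text{odd})$ and $(\text{odd})(\text{even})$ are inverse to each other as permutations of $J(P)$. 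I would then note that conventionally $\mathrm{Gyr}$ is treated up to this inversion (or, more carefully, that the statement of the lemma is really ``$\pro_{\pi,v}^\sigma$ equals $\mathrm{Gyr}$ or $\mathrm{Gyr}^{-1}$,'' and since gyration and its inverse have the same orbit structure, this is the content used later). Actually, to be safe, I would phrase the conclusion as: $\pro_{\pi,v}^\sigma$ is either $\mathrm{Gyr}$ or $\mathrm{Gyr}^{-1}$ depending on which parity lands in the first half, and both choices of $\sigma$ are permitted by the hypothesis, so the lemma as needed follows.

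The main obstacle I anticipate is the bookkeeping around the middle of the interval $[a,b]$ and the precise matching of parities: one must verify that the hypothesis ``$\sigma(k)$ odd for $k < \frac{a+b}{2}$ and even for $k > \frac{a+b}{2}$'' is actually consistent with $\sigma$ being a bijection of $[a,b]$ — i.e., that $[a,b]$ contains equally many (or appropriately many) even and odd integers and that the ``halves'' $\{k < \frac{a+b}{2}\}$ and $\{k > \frac{a+b}{2}\}$ have the matching sizes. This is a parity/counting check on the integer interval $[a,b]$: if $b - a$ is odd then $[a,b]$ has equal numbers of evens and odds and the two open halves each have $(b-a+1)/2$ elements, matching up; if $b-a$ is even one checks that the endpoint parities of $[a,b]$ and the arithmetic of $\frac{a+b}{2}$ cooperate. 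I would dispatch this with a short case analysis on the parity of $b-a$, and the rest of the proof is the commutation argument, which is routine given Lemma~\ref{lem:hyperplane}, Remark~\ref{remark:commute}, and the Remark identifying hyperplane parity.
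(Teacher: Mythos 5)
The paper states Lemma~\ref{lem:gyr} without proof, so there is nothing to compare against directly; your commutation argument (same-parity hyperplane toggles commute since a covering relation changes $\langle\pi(\cdot),v\rangle$ by exactly $\pm1$; the odd-indexed hyperplanes partition the odd-parity elements of $P$ and likewise for even; hence the product collapses to ``toggle one parity class, then the other'') is certainly the intended and essentially the only argument, and its substance is correct. The existence/counting check on $\sigma$ that you flag is also handled adequately.

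However, there is one concrete flaw: you read the product $\pro_{\pi,v}^{\sigma}=T_{\pi,v}^{\sigma(a)}\cdots T_{\pi,v}^{\sigma(b)}$ left to right, conclude that the odd hyperplanes act first, and therefore only prove that $\pro_{\pi,v}^{\sigma}$ is either $\mathrm{Gyr}$ or $\mathrm{Gyr}^{-1}$, hedging that this suffices ``up to inversion.'' The lemma asserts equality with $\mathrm{Gyr}$, and the composition convention is not ambiguous in this paper: the proposition identifying $\pro_{\pi,(1,1,\ldots,1)}$ with $\row$ says the sweep goes from top to bottom, i.e., the highest ranks are toggled first, and in the product $\ldots T_{\pi,v}^{0}T_{\pi,v}^{1}T_{\pi,v}^{2}\ldots$ the highest indices sit at the \emph{right} end; so the rightmost factor acts first (ordinary function composition). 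Under the hypothesis on $\sigma$, the right end of $T_{\pi,v}^{\sigma(a)}\cdots T_{\pi,v}^{\sigma(b)}$ consists of the factors $T_{\pi,v}^{\sigma(k)}$ with $k>\frac{a+b}{2}$, which are the \emph{even} hyperplanes. Hence the even-parity elements are toggled first and the odd-parity elements second, which is exactly $\mathrm{Gyr}$ as in Definition~\ref{def:gyr} --- no inverse arises and no hedge is needed. Your final paragraph should therefore be replaced by this observation; as written, your proof establishes a weaker statement than the lemma, and the ``conventionally $\mathrm{Gyr}$ is treated up to inversion'' escape is not supported by the text.
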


We are now nearly ready to state and prove the main theorem of this section. 
We will need the following lemma, which appears as \cite[Lemma~5.1]{prorow}. 
\begin{lemma}[\cite{hoffman.humphreys}]
\label{lem:conjugatelem}
Let $G$ be a group whose generators $g_1,\ldots, g_n$ satisfy $g_i^2 = 1$ and $(g_i g_j)^2 = 1$ if
$|i - j| > 1$. Then for any $\sigma, \tau \in \mathfrak{S}_n$,
$\prod_i g_{\sigma(i)}$ and $\prod_i g_{\tau(i)}$are conjugate.
\end{lemma}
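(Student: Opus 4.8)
The plan is to show that every ordering $g_{\sigma(1)}g_{\sigma(2)}\cdots g_{\sigma(n)}$ is conjugate to the single fixed product $g_1g_2\cdots g_n$; since conjugacy is an equivalence relation, this yields the stated result for all $\sigma,\tau$. First I would record that the hypotheses force non-adjacent generators to commute: from $g_i^2=g_j^2=1$ and $(g_ig_j)^2=1$ we get $g_ig_j=(g_ig_j)^{-1}=g_jg_i$ whenever $|i-j|>1$. I then isolate two moves on a word $g_{\sigma(1)}\cdots g_{\sigma(n)}$ that preserve its conjugacy class: (a) interchanging two adjacent factors whose indices differ by more than $1$, which leaves the group element unchanged since they commute; and (b) conjugating by the leftmost factor, $g_{\sigma(1)}(g_{\sigma(1)}\cdots g_{\sigma(n)})g_{\sigma(1)}=g_{\sigma(2)}\cdots g_{\sigma(n)}g_{\sigma(1)}$, a cyclic rotation of the word (using $g_{\sigma(1)}^{-1}=g_{\sigma(1)}$). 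The whole argument then reduces to showing that moves (a) and (b) connect an arbitrary ordering to $g_1\cdots g_n$.

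To organize this, I would encode each ordering by an orientation of the \emph{non-commutation graph} $\Gamma$, whose vertices are $1,\dots,n$ and whose edges join $i$ and $j$ exactly when $g_i,g_j$ fail to commute, i.e.\ when $|i-j|=1$; thus $\Gamma$ is the path $1-2-\cdots-n$. Given a word, orient each edge $\{i,i+1\}$ from the earlier factor to the later one. Move (a) never changes this orientation, while move (b) turns the source vertex $\sigma(1)$ (all of whose edges point outward, since it precedes its neighbours) into a sink. Conversely, any such \emph{source-to-sink flip} can be realized at the level of words by first sliding the relevant source generator to the front through commuting swaps---legitimate precisely because every factor currently preceding a source commutes with it---and then applying move (b). Since the all-forward orientation $1\to 2\to\cdots\to n$ forces $g_1$ before $g_2$ before $\cdots$ before $g_n$ and hence corresponds to the unique word $g_1\cdots g_n$, it suffices to prove that source-to-sink flips connect every orientation of the path to this one.

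The crux is the purely graph-theoretic claim that all orientations of the path are connected by source-to-sink flips; this is where the tree (type $A$) structure is essential. I would prove it by a ``sorting'' induction on the number of backward edges, declaring an edge $\{i,i+1\}$ \emph{forward} if it points $i\to i+1$ and \emph{backward} otherwise. If an orientation has a backward edge, let $\{j,j+1\}$ be the rightmost one; then vertex $j+1$ has both incident edges pointing outward (its left edge because $\{j,j+1\}$ is backward, its right edge because every edge to the right of $j$ is forward), so $j+1$ is a source. Flipping it replaces the backward-then-forward pattern at $\{j,j+1\},\{j+1,j+2\}$ by forward-then-backward, moving the rightmost backward edge one step to the right without creating new backward edges further left; iterating pushes this edge off the right end (where flipping the leaf source at vertex $n$ simply deletes it), strictly decreasing the total number of backward edges. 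By induction we reach the all-forward orientation, completing the argument. The main obstacle is getting this connectivity step right---in particular, verifying that each flip dictated by the orientation picture is genuinely realizable at the word level by a legal slide-and-rotate---rather than any of the routine algebraic verifications.
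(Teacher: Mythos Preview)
The paper does not prove this lemma at all: it is stated with a citation to \cite{hoffman.humphreys} (and noted to appear as \cite[Lemma~5.1]{prorow}) and then used as a black box in the proof of Theorem~\ref{thm:3Dprorow}. So there is no ``paper's own proof'' to compare against; you have supplied an argument where the paper is content to quote one.

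Your argument is correct. The two moves you isolate---commuting swaps of adjacent factors with non-adjacent indices, and cyclic rotation by conjugation with the leftmost factor---are exactly the right ones, and your translation into source-to-sink flips on orientations of the path $1\text{--}2\text{--}\cdots\text{--}n$ is the standard and cleanest way to organize the endgame. The verification that any source in the orientation can actually be slid to the front of the word (because a source precedes both of its neighbours, hence commutes with everything earlier) is the key point, and you state it. The termination argument is fine once one reads ``strictly decreasing the total number of backward edges'' as applying to the net effect of pushing the rightmost backward edge all the way off the right end; within that push, the count is constant and only drops at the leaf flip. If you want to make the induction airtight in a single sentence, you could instead induct on the pair (number of backward edges, position of rightmost backward edge) in lexicographic order, which strictly decreases at every individual flip.

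As context, what you have written is essentially the classical proof that all Coxeter elements of a Coxeter system with acyclic diagram are conjugate, specialized to the type~$A$ path. That is almost certainly the argument behind the citations as well, so while you are going beyond what the paper does, you are not taking a different route from the literature it points to.
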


The main theorem of this section is below, whose proof follows the proof of \cite[Theorem~5.2]{prorow}.
\begin{theorem}
\label{thm:3Dprorow}
Let $P$ be a finite poset with an $n$-dimensional lattice projection $\pi$. Let  $v=(v_1,v_2,v_3,\ldots, v_{n})$ and $w=(w_1,w_2,w_3,\ldots, w_{n})$, where $v_j, w_j\in\{\pm 1\}$. Finally suppose that $\sigma:\mathrm{supp}(P,\pi,v)\rightarrow\mathrm{supp}(P,\pi,v)$ and $\tau:\mathrm{supp}(P,\pi,w)\rightarrow\mathrm{supp}(P,\pi,w)$ are bijections.
Then there is an equivariant bijection
between $J(P)$ under $\pro_{\pi,v}^{\sigma}$ and $J(P)$ under $\pro_{\pi,w}^{\tau}$.
\end{theorem}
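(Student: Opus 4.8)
The plan is to reduce the theorem to the abstract conjugacy lemma, Lemma~\ref{lem:conjugatelem}, via two steps: first relate $\pro_{\pi,v}^{\sigma}$ to $\pro_{\pi,v}^{\tau'}$ for any two orderings $\sigma,\tau'$ of the \emph{same} support interval (and direction $v$), and then relate $\pro_{\pi,v}^{\sigma}$ for direction $v$ to $\pro_{\pi,w}^{\tau}$ for a direction $w$ differing from $v$ in a single coordinate, whence an induction on the number of sign disagreements between $v$ and $w$ finishes the argument. Throughout, ``equivariant bijection'' between $J(P)$ under two permutations means conjugation by an element of $\mathfrak{S}_{J(P)}$, so it suffices to show the two toggle products are conjugate in $\mathfrak{S}_{J(P)}$ (in fact in the toggle group).

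For the first step, fix $v$ and let $[a,b]=\mathrm{supp}(P,\pi,v)$. The generators are the hyperplane operators $g_i := T_{\pi,v}^i$ for $i\in[a,b]$. By Lemma~\ref{lem:hyperplane} each $g_i^2=1$. I claim that $(g_i g_j)^2 = 1$ whenever $|i-j|>1$: by Remark~\ref{remark:commute} it is enough to check that no element on the hyperplane $\langle\pi(x),v\rangle=i$ covers or is covered by an element on $\langle\pi(x),v\rangle=j$, and indeed a cover relation changes $\langle\pi(x),v\rangle$ by exactly $\pm1$ (same computation as in the proof of Lemma~\ref{lem:hyperplane}), so the two values must be adjacent. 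Hence the hypotheses of Lemma~\ref{lem:conjugatelem} hold, and $\pro_{\pi,v}^{\sigma}=\prod_i g_{\sigma(i)}$ and $\pro_{\pi,v}^{\tau'}=\prod_i g_{\tau'(i)}$ are conjugate for any two bijections $\sigma,\tau'$ of $[a,b]$. This already handles the case $v=w$, and it means that for the general case we may choose whichever convenient ordering of the hyperplane toggles we like for each direction.

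For the second step, suppose $w$ is obtained from $v$ by flipping the sign in coordinate $m$, and assume without loss of generality (using the first step and the Remark that $\pro_{\pi,-v}=(\pro_{\pi,v})^{-1}$, together with the fact that conjugacy is preserved under taking inverses) that we may pick the orderings we want. Here I would mimic the argument from \cite[Theorem~5.2]{prorow}: cut the poset along the coordinate-$m$ hyperplanes, so that $P$ is partitioned into ``slabs'' $P_k = \{x : \pi(x)_m = k\}$ for finitely many integers $k$; within a single slab the value of $\langle\pi(x),v\rangle$ and the value of $\langle\pi(x),w\rangle$ differ only by a global shift (namely $2k$), so performing $\pro_{\pi,v}$ restricted to a slab is, up to relabelling the hyperplane index, the same as performing $\pro_{\pi,w}$ restricted to that slab, but swept in the opposite direction along that axis. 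The key structural fact — exactly as in \cite{prorow} — is that toggles of elements in two different slabs $P_k$ and $P_{k'}$ with $|k-k'|\geq 2$ commute (no cover relation crosses two slab boundaries), so the toggle product over all of $P$ can be reorganized into a product over slabs, to which Lemma~\ref{lem:conjugatelem} is applied one more time with the $g$'s now indexed by slabs; this yields a conjugacy between $\pro_{\pi,v}^{\sigma}$ and $\pro_{\pi,w}^{\tau}$ for suitable $\sigma,\tau$, and then the first step upgrades ``suitable'' to ``arbitrary''. Finally, iterating over the coordinates where $v$ and $w$ disagree gives the theorem in full.

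The main obstacle I expect is the bookkeeping in the second step: one must verify carefully that reorganizing the full toggle product $\pro_{\pi,v}^{\sigma}$ into a slab-by-slab product is legitimate (i.e.\ that the only obstructions to commuting toggles are between adjacent slabs, so that Lemma~\ref{lem:conjugatelem} applies with slabs as ``letters''), and that within each slab the two sweep directions really do produce conjugate restricted operators rather than merely equal-order ones. This is precisely the technical heart of \cite[Theorem~5.2]{prorow}, and the work here is to check that the proof there goes through verbatim once ``rows/columns of an rc-poset'' is replaced by ``affine hyperplanes orthogonal to $v$,'' using Lemma~\ref{lem:hyperplane} and the Remark following Definition~\ref{def:gyr} (that hyperplane classes are parity-homogeneous) in place of the corresponding $2$-dimensional facts; the $n$-dimensional lattice projection framework was set up in Section~\ref{subsec:3Dprorow} exactly so that this replacement is available.
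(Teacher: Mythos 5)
Your first step is exactly the first half of the paper's proof: the hyperplane toggles $T_{\pi,v}^i$ are involutions by Lemma~\ref{lem:hyperplane}, non-adjacent ones commute because a cover relation changes $\langle\pi(\cdot),v\rangle$ by exactly $\pm 1$, and Lemma~\ref{lem:conjugatelem} then makes all orderings of the hyperplane toggles for a \emph{fixed} $v$ mutually conjugate. Where you diverge is in crossing from $v$ to $w$. The paper does this with one short observation rather than a slab decomposition: since every coordinate of $v$ is $\pm 1$, the parity of $\langle\pi(p),v\rangle$ equals the parity of $\rk(p)$, so each hyperplane class is parity-homogeneous; hence (Lemma~\ref{lem:gyr}) one can choose $\sigma_v$ so that $\pro_{\pi,v}^{\sigma_v}$ toggles all even-rank elements first and then all odd-rank elements, i.e.\ equals $\mathrm{Gyr}$ of Definition~\ref{def:gyr}. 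Since $\mathrm{Gyr}$ does not depend on $v$ at all, every $\pro_{\pi,v}^{\sigma}$ is conjugate to the single element $\mathrm{Gyr}$, and the theorem follows with no induction on sign flips. (Your closing remark that hyperplane classes are parity-homogeneous is precisely this mechanism; the paper uses it as the whole bridge between directions, not as a side check.)

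Your proposed second step, as written, has a concrete gap. After reorganizing the toggle product into slabs $P_k=\{x:\pi(x)_m=k\}$, you invoke Lemma~\ref{lem:conjugatelem} ``with the $g$'s now indexed by slabs.'' But that lemma requires $g_i^2=1$, and a slab operator is not an involution: a slab contains cover relations in every direction $e_{m'}$ with $m'\neq m$, so the toggles inside one slab do not commute and their product has no reason to square to the identity. To salvage the argument you would need a commutation-only version of the conjugacy lemma (all orderings compatible with a path-shaped non-commutation graph are conjugate via cyclic shifts and swaps of commuting factors --- true, but not what is cited), plus a trace-rewriting verification that the full product really does factor slab-by-slab, with the slabs traversed in opposite orders for $v$ versus $w$ and identical restricted operators on each slab. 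You flag this as ``bookkeeping,'' but it is exactly the point at which the cited lemma fails to apply; moreover the proof of \cite[Theorem~5.2]{prorow} that you propose to follow ``verbatim'' is in fact the gyration argument above, not a slab argument, so there is no existing proof to lean on for this step.
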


\begin{proof}
Suppose $P$ is a finite poset with an $n$-dimensional lattice projection $\pi$. Let  $v=(v_1,v_2,v_3,\ldots, v_{n})$,
 where $v_j \in\{\pm 1\}$.
We claim the toggles $T_{\pi,v}^i$ for $i\in\mathrm{supp}(P,\pi,v)$ satisfy the conditions of Lemma~\ref{lem:conjugatelem}. By Lemma~\ref{lem:hyperplane}, $(T_{\pi,v}^i)^2=1$. Also, if $\langle \pi(x),v\rangle=i$ and $\langle \pi(y),v\rangle=j$, then $\langle \pi(y)-\pi(x),v\rangle=j-i$. So if $|j-i|>1$, as all the coefficients in $v$ are $\pm 1$, then $\pi(y)-\pi(x)$ cannot be $e_i$ for any $i$, and $y$ and $x$ cannot be part of a covering relation. Thus, toggles on non-adjacent hyperplanes commute, and we have $(T_{\pi,v}^i T_{\pi,v}^j)^2=1$ when $|j-i|>1$.
So by Lemma~\ref{lem:conjugatelem}, for any bijections $\sigma,\sigma':\mathrm{supp}(P,\pi,v)\rightarrow\mathrm{supp}(P,\pi,v)$,
there is an equivariant bijection between
$J(P)$ under $\pro_{\pi,v}^{\sigma}$ and $J(P)$ under $\pro_{\pi,v}^{\sigma'}$ (since such bijections can be considered as permutations in $\mathfrak{S}_{b-a+1}$ if $\mathrm{supp}(P,\pi,v)=[a,b]$).

Consider $\mathrm{Gyr}$ of Definition~\ref{def:gyr}.
By Lemma~\ref{lem:gyr}, for every $v$ there exists a $\sigma_v$ such that $\mathrm{Gyr}$ can be realized as $\pro_{\pi,v}^{\sigma_v}$. Therefore, 
there is an equivariant bijection between $J(P)$ under $\pro_{\pi,v}^{\sigma}$ and under $\pro_{\pi,v}^{\sigma_v}=\mathrm{Gyr}$, from which the theorem follows.
\end{proof}

After we see a bijection between increasing tableaux and plane partitions given in the next section, we will use Theorem~\ref{thm:3Dprorow} to give an improvement on Theorem~\ref{thm:cfdf} of P.~Cameron--D.~Fon-der-Flaass (discussed in Section~\ref{subsec:row}).

\section{An equivariant bijection between plane partitions  and increasing tableaux}
\label{sec:pp_inctab}

\subsection{Bijections between plane partitions and increasing tableaux}
\label{sec:bij}

In this section, we introduce bijections between increasing tableaux and plane partitions. The existence of these bijections should not be at all surprising. However, these maps have amazing properties that will be key to many of our results. These maps are also fundamental to \cite{HPPW}, where they are used to give the first bijective proofs of various results on plane partitions, including R.~Proctor's main result from \cite{Proctor}.

We define a map $\Psi_3 : J({\bf a} \times {\bf b} \times {\bf c}) \to \inc{a \times b}{a+b+c-1}$ as follows.
Let $I \in J({\bf a} \times {\bf b} \times {\bf c})$. Thinking of $I$ in the standard way as a pile of small cubes in an $a \times b \times c$ box, project onto the $a \times b$ face. Record in position $(i,j)$ the number of boxes of $I$ with coordinate $(i,j,k)$ for some $0 \leq k \leq c-1$. The result is a standard plane partition representation of $I$, as a filling of the Young diagram $a \times b$ with nonnegative integers such that rows weakly decrease from left to right and columns weakly decrease from top to bottom. Rotate this plane partition $180^\circ$, so that rows and columns become weakly \emph{increasing}. Now thinking of $a \times b$ as a graded poset with the upper left corner box the unique element of rank $0$, add to each label its rank plus 1. That is, increase each label by one more than its distance from the upper left corner box. (This is  the standard way of converting a \emph{weakly} increasing sequence into a \emph{strictly} increasing one.) The result is the increasing tableau $\Psi_3(I)$. For an example of this transformation, see Figure~\ref{fig:incretization}.

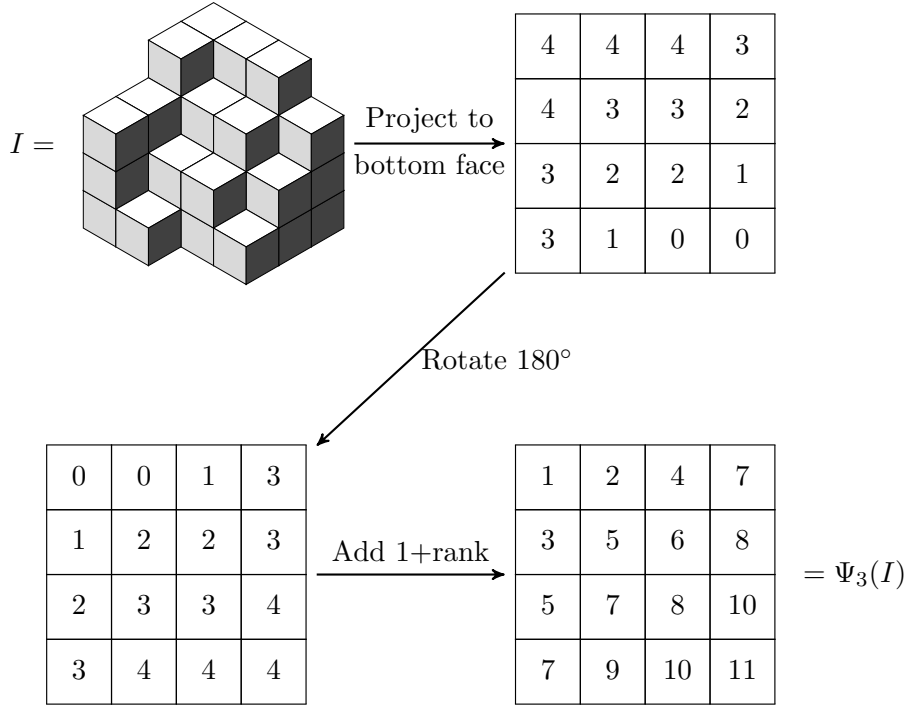
\begin{figure}[ht]
\begin{tikzpicture}
\node (A) {\Scale[.5]{\begin{tikzpicture}
 \planepartition{{4,4,4,3},{4,3,3,2},{3,2,2,1},{3,1}}
\end{tikzpicture}}};
\node[left = .1 of A] (Z) {$I=$};
\node[right = 2 of A] (B) {\ytableaushort{4443,4332,3221,3100}};
\node[below left = 2 and 2.5 of B] (C) {\ytableaushort{0013,1223,2334,3444}};
\node[below = 2 of B] (D) {\ytableaushort{1247,3568,578{10},79{10}{11}}};
\node[right=.1 of D] {$= \Psi_3(I)$};
\path (A) edge[pil] node[above]{Project to} node[below]{bottom face} (B);
\path (B) edge[pil] node[right]{Rotate $180^\circ$} (C);
\path (C) edge[pil] node[above]{Add $1+$rank} (D);
\end{tikzpicture}
\caption{The process of applying $\Psi_3$ to the illustrated $I \in J({\bf 4} \times {\bf 4} \times {\bf 4})$. Here we think of $\Psi_3$ as projecting onto the bottom face of the large bounding box.}
\label{fig:incretization}
\end{figure}

\begin{theorem}
\label{thm:mainbij}
$\Psi_3 : J({\bf a} \times {\bf b} \times {\bf c}) \to \inc{a \times b}{a+b+c-1}$ gives a bijection between plane partitions inside an $a\times b\times c$ box and increasing tableaux of shape $a\times b$ and entries at most $a+b+c-1$.
\end{theorem}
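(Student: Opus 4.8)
The statement asserts that $\Psi_3$ is a bijection. Since $\Psi_3$ is manifestly a composition of three maps --- projection to the bottom face, $180^\circ$ rotation, and adding $1+\mathrm{rank}$ --- and the last two of these are trivially invertible, the content is entirely in showing that the ``project to bottom face'' step is a bijection between $J(\mathbf{a}\times\mathbf{b}\times\mathbf{c})$ and the set of column-strict, row-strict plane partition arrays of shape $a\times b$ with entries between $0$ and $c$, and that the subsequent adjustments land exactly in $\inc{a\times b}{a+b+c-1}$. The plan is to exhibit the inverse map explicitly and check that both composites are the identity.

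First I would verify that $\Psi_3$ is well-defined, i.e.\ that its image really does consist of increasing tableaux with entries at most $a+b+c-1$. For well-definedness of the target: the projected array $\{m_{ij}\}$ (number of cubes in column $(i,j)$) satisfies $m_{ij}\ge m_{i+1,j}$ and $m_{ij}\ge m_{i,j+1}$ precisely because $I$ is an order ideal in $\mathbf{a}\times\mathbf{b}\times\mathbf{c}$ (downward closure in the first two coordinates forces the column heights to be a weakly decreasing plane-partition array), and $0\le m_{ij}\le c$. After the $180^\circ$ rotation the array becomes weakly increasing along rows and columns with entries in $[0,c]$; adding $1+\mathrm{rank}$ (where the rank of box $(i,j)$ in the rotated $a\times b$ poset is $i+j$ with the corner at rank $0$) turns ``weakly increasing'' into ``strictly increasing'' in the standard way, and the maximal possible entry is $c + 1 + ((a-1)+(b-1)) = a+b+c-1$, attained at the far corner. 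So indeed $\Psi_3(I)\in\inc{a\times b}{a+b+c-1}$.

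Next I would construct the inverse $\Psi_3^{-1}$: given $T\in\inc{a\times b}{a+b+c-1}$, subtract $1+\mathrm{rank}$ from each entry to obtain an array $N$; check that strict increase of $T$ forces $N$ to be weakly increasing with entries in $[0,c]$ (the lower bound $N_{ij}\ge 0$ requires a short argument using strictness along a saturated chain from the corner, and the upper bound $N_{ij}\le c$ uses the entry bound on $T$). Rotate $N$ by $180^\circ$ to get a weakly decreasing array $M$ with entries in $[0,c]$, and read $M$ as column heights to build the cube-pile $I = \{(i,j,k) : 0\le k < M_{ij}\}$. The weak-decrease conditions on $M$ are exactly what is needed for $I$ to be downward closed in all three coordinates, hence an order ideal of $\mathbf{a}\times\mathbf{b}\times\mathbf{c}$; the bound $M_{ij}\le c$ keeps $I$ inside the box. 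That these constructions are mutually inverse is then immediate since each individual step (projection/height-reading, rotation, rank shift) is visibly reversed by the corresponding step in the other direction.

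The main obstacle --- though it is more bookkeeping than genuine difficulty --- is keeping the three coordinate conventions straight: the correspondence between ``$I$ is an order ideal'' and ``column-height array is a weakly decreasing plane partition,'' the effect of the $180^\circ$ rotation on the two monotonicity directions and on which corner has rank $0$, and the precise inequality $c + (a-1) + (b-1) + 1 = a+b+c-1$ for the entry bound. I would state the rank of a box of the rotated $a\times b$ diagram cleanly (distance from the designated corner) once and for all, and then the verifications that strict $\leftrightarrow$ weak monotonicity and that the entry bounds match up become one-line checks. No deep input is required; Theorem~\ref{thm:mainbij} is essentially a careful unwinding of definitions, which is why the authors remark that its existence ``should not be at all surprising.''
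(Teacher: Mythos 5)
Your proposal is correct and follows exactly the route the paper takes: the paper's entire proof is the one-line observation that $\Psi_3$ is a composition of a projection, a rotation, and an entrywise addition, each clearly invertible, and your write-up simply fills in the routine verifications (well-definedness, the entry bound $c+1+(a-1)+(b-1)=a+b+c-1$, and the explicit inverse) that the paper leaves implicit.
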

\begin{proof}
The map is defined as the composition of a projection, a rotation, and entrywise addition, all of which are clearly invertible.
\end{proof}

Similarly, define bijections $\Psi_2 : J({\bf a} \times {\bf b} \times {\bf c}) \to \inc{a \times c}{a+b+c-1}$ and $\Psi_1 : J({\bf a} \times {\bf b} \times {\bf c}) \to \inc{b \times c}{a+b+c-1}$ projecting onto the ${\bf a} \times {\bf c}$ and ${\bf b} \times {\bf c}$ faces, respectively (cf.\ Figure~\ref{fig:3projections}).

\begin{figure}[htbp]
\begin{tikzpicture}[scale=1,every node/.append style={transform shape}]
\begin{scope}[scale=.7]
\node (A) {};
 \planepartitionlabel{{4,4,4,3},{4,3,3,2},{3,2,2,1},{3,1}}
 \leftsidelabel{-1}{3}{3}
 \topsidelabel{3}{3}{-1}
 \topsidelabel{3}{2}{-1}
 \rightsidelabel{3}{-1}{3}
 \rightsidelabel{2}{-1}{3}
 \end{scope}
\begin{scope}[xshift=5cm,yshift=-6cm,scale=.8]
\node (L) {\ytableaushort[\color{blue}]{4442,4321,3321,3210}};
\end{scope}
\begin{scope}[yshift=-6cm,scale=.8]
\node (T) {\ytableaushort[\color{red}]{4443,4332,3221,3100}};
\end{scope}
\begin{scope}[xshift=-5cm,yshift=-6cm,scale=.8]
\node (R){\ytableaushort[\color{green}]{4443,4331,4310,2100}};
\end{scope}
\begin{scope}[xshift=5cm,yshift=-10cm,scale=.8]
\node (LL) {\ytableaushort[\color{blue}]{1357,3578,468{10},69{10}{11}}};
\end{scope}
\begin{scope}[yshift=-10cm,scale=.8]
\node (TT) {\ytableaushort[\color{red}]{1247,3568,578{10},79{10}{11}}};
\end{scope}
\begin{scope}[xshift=-5cm,yshift=-10cm,scale=.8]
\node (RR){\ytableaushort[\color{green}]{1246,2479,478{10},79{10}{11}}};
\end{scope}
\draw[->,shorten <=3cm] (A) -- (T);
\draw[->,shorten <=3cm] (A) -- (L);
\draw[->,shorten <=3cm] (A) -- (R);
\draw[->] (T) -- (TT);
\draw[->] (L) -- (LL);
\draw[->] (R) -- (RR);
\end{tikzpicture}
\caption{The three bijections, $\Psi_1$, $\Psi_3$, and $\Psi_2$}
\label{fig:3projections}
\end{figure}
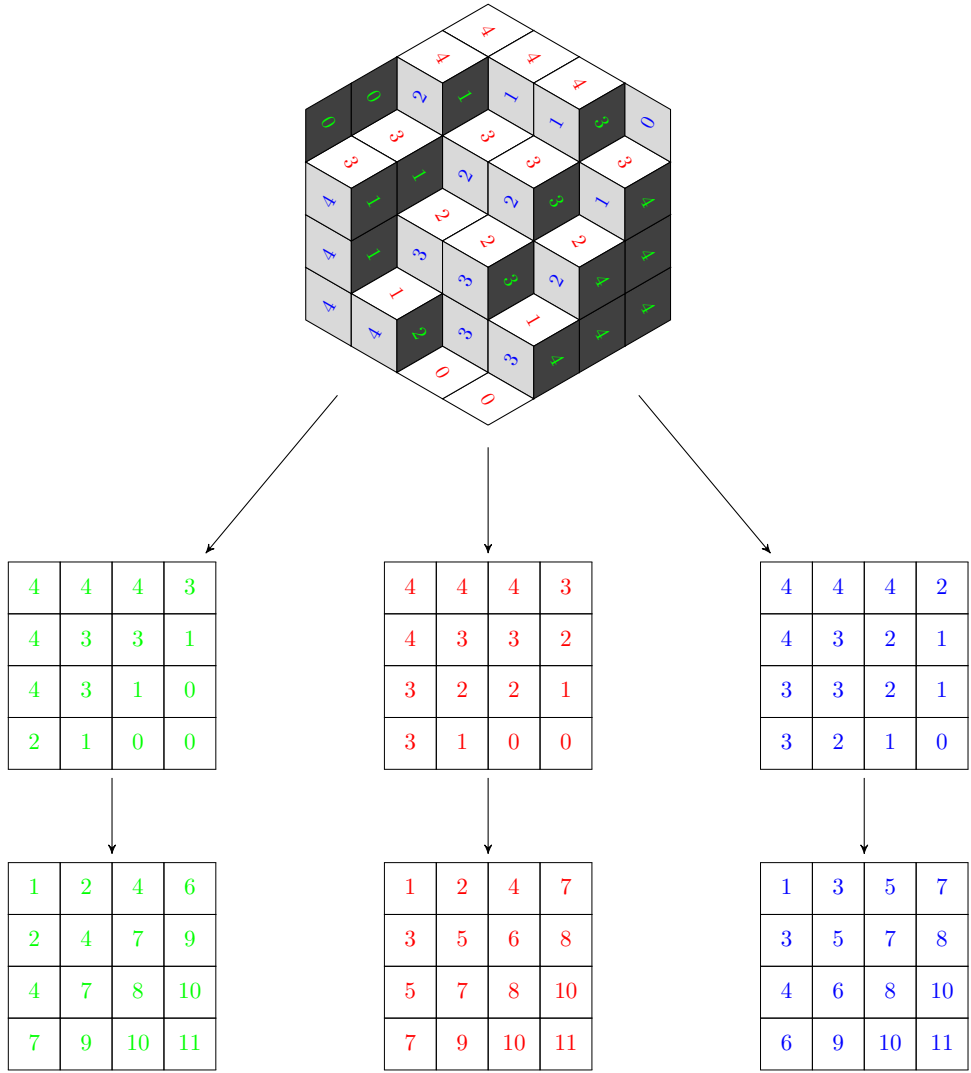

Given the simplicity of the bijection of Theorem~\ref{thm:mainbij}, one might wonder why it was previously overlooked. The set of increasing tableaux in bijection with plane partitions includes those with gaps in the binary content. However much previous research on increasing tableaux was motivated by $K$-theoretic geometry, and in this context there is little reason to consider increasing tableaux without full binary content. Moreover, by restricting to tableaux of full binary content, one obtains some attractive enumerations \cite{pechenik, pressey.stokke.visentin}; for instance, the number of increasing tableaux with shape $2 \times n$ and full binary content is the $n$th \emph{small Schr\"oder number} \cite[Theorem~1.1]{pechenik}.
It was the equivariance of the actions of $\kpro$ and $\row$, discussed in the next section, which led us to observe the bijection of Theorem~\ref{thm:mainbij}.

\subsection{The equivariance of $\kpro$ and $\row$}
\label{sec:equivariant_bijection}

Our first main result was Theorem~\ref{thm:3Dprorow}, that given a poset $P$ with lattice projection $\pi$, there is an equivariant bijection between the order ideals $J(P)$ under $\pro_{\pi,v}^{\sigma}$ and $\pro_{\pi,w}^{\tau}$, where $\sigma, \tau$ are any permutations  of the hyperplane toggles associated to the $\{-1,1\}$-vectors $v, w$.
In this section, we use Theorem~\ref{thm:3Dprorow} in our proof of our second main result, 
Theorem~\ref{cor:rowinc}, 
that $\kpro$ and $\row$ are in equivariant bijection.

\begin{lemma}\label{thm:main}
$\Psi_3$ intertwines $\pro_{\mathrm{id},(1,1,-1)}$ and $\kpro$. That is, the following diagram commutes:

\begin{center}
\begin{tikzpicture}
\node (A) {$J({\bf a} \times {\bf b} \times {\bf c})$};
\node[right=2 of A] (B) {$\inc{a \times b}{a+b+c-1}$};
\node[below =2 of A] (C) {$J({\bf a} \times {\bf b} \times {\bf c})$};
\node[below right = 2 and 2 of A] (D) {$\inc{a \times b}{a+b+c-1}$};
\path (A) edge[pil] node[below]{$\Psi_3$} (B);
\path (A) edge[pil] node[right]{$\pro_{\mathrm{id},(1,1,-1)}$} (C);
\path (B) edge[pil] node[right]{$\kpro$} (D);
\path (C) edge[pil] node[below]{$\Psi_3$} (D);
\end{tikzpicture}
\end{center}
\end{lemma}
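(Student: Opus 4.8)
The plan is to reduce the statement to a purely combinatorial claim about how $\Psi_3$ transports the hyperplane toggles $T^i_{\mathrm{id},(1,1,-1)}$ on $J(\mathbf a\times\mathbf b\times\mathbf c)$ to the $K$-Bender--Knuth involutions $\kbk_i$ on $\inc{a\times b}{a+b+c-1}$. By Proposition~\ref{prop:kbk} we know $\kpro=\kbk_{q-1}\circ\cdots\circ\kbk_1$ with $q=a+b+c-1$, and by Definition~\ref{def:3DPro} we know $\pro_{\mathrm{id},(1,1,-1)}$ is the ordered product of the affine hyperplane toggles $T^i_{\mathrm{id},(1,1,-1)}$, swept in increasing order of the value $i=x+y-z$. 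Since $\Psi_3$ is a bijection (Theorem~\ref{thm:mainbij}), it suffices to exhibit, for each relevant index, a conjugacy $\Psi_3\circ T^{i}_{\mathrm{id},(1,1,-1)}\circ\Psi_3^{-1}=\kbk_{\phi(i)}$ for an appropriate order-preserving reindexing $\phi$, and then check the two sweep orders agree (up to the single outer-corner-filling/decrement bookkeeping that distinguishes the two descriptions of $\kpro$, exactly as in the proof of Proposition~\ref{prop:kbk}).

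The core geometric observation to establish first is: under $\Psi_3$, the set of cubes of a plane partition $I$ lying on the affine hyperplane $x+y-z=i$ corresponds precisely to the set of boxes of $\Psi_3(I)$ carrying the label related to $i$ under the ``add $1+\mathrm{rank}$'' shift. Concretely, a box in position $(i,j)$ of the $a\times b$ Young diagram has rank $i+j$ (with the upper-left box at rank $0$ after the $180^\circ$ rotation), and its entry in $\Psi_3(I)$ is (rotated column height) $+\,(i+j)+1$; tracing through the definition of $\Psi_3$, the column height over $(i,j)$ is $c$ minus the number of ``missing'' cubes, and one checks that a cube is present/absent exactly according to which side of the hyperplane $x+y-z=i$ it lies on. Thus toggling membership across one hyperplane $x+y-z=i$ — adding a cube where legal, removing where legal — translates into incrementing/decrementing a single label in $\Psi_3(I)$ wherever the order-ideal condition permits, which is exactly the definition of some $\kbk_i$. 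Here the order-ideal legality condition on adding/removing a cube (it stays an order ideal iff the neighbors above and below it in the poset are appropriately occupied/empty) must be matched with the ``short ribbon is a single box'' legality condition in the definition of $\kbk_i$; this matching is the heart of the argument and should be done by a careful local case analysis on the $2\times2$ windows of the tableau, mirroring the configuration analysis already used in the proof of Lemma~\ref{lem:descents}.

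With that dictionary in hand, the remaining steps are bookkeeping. First, confirm that the toggles $T^i_{\mathrm{id},(1,1,-1)}$ that are nontrivial on $J(\mathbf a\times\mathbf b\times\mathbf c)$ are exactly those for $i$ in a range corresponding, under the label-shift, to $1\le i\le q$, and that sweeping $x+y-z$ from its minimum value to its maximum value matches sweeping the $\kbk_i$ in the order $\kbk_1,\kbk_2,\dots$ — here one must be attentive to orientation, since increasing $x+y-z$ should correspond to increasing label index (this is where one uses that $v=(1,1,-1)$, i.e. $z$ enters with the negative sign, together with the $180^\circ$ rotation in the definition of $\Psi_3$). Second, reconcile the two end-effects: in $\pro_{\mathrm{id},(1,1,-1)}$ the first and last hyperplane toggles correspond to the boundary of the box (empty cubes being ``added'' or ``removed'' at the extremes), while in the $\kbk$-description of $\kpro$ one deletes the $1$'s at the start and fills outer corners with $q$ at the end; as in the proof of Proposition~\ref{prop:kbk}, these are literally the same operation seen from the two viewpoints, so the composite maps agree on the nose.

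The main obstacle I expect is the local legality matching in the second paragraph: verifying that the order-ideal condition governing when a cube on the hyperplane $x+y-z=i$ may be toggled corresponds, box-by-box, to the ``connected component is a single box'' condition governing $\kbk_i$. In the poset $\mathbf a\times\mathbf b\times\mathbf c$ a cube at $(x,y,z)$ on this hyperplane has potential covers/cocovers at $(x,y,z\pm1)$ (same hyperplane shifted by $\mp1$... no — at $(x\pm1,y,z)$, $(x,y\pm1,z)$, $(x,y,z\pm1)$, which lie on hyperplanes $i\pm1$), and the toggle is nontrivial at that cube iff its immediate poset-neighbors are configured so that adding/removing keeps an order ideal; one must show this is equivalent to the two boxes of $\Psi_3(I)$ that ``see'' labels $i$ and $i+1$ at that location forming a trivial (singleton) short ribbon. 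I would handle this by enumerating the finitely many local pictures — effectively the possible fillings of a $2\times2$ sub-square of the tableau with consecutive values — and checking each against the corresponding cube configuration, much in the spirit of the local analyses already appearing in Section~\ref{sec:descent_cycling}. Once this lemma is proven, the commutativity of the diagram is immediate by composing $q$ conjugacies.
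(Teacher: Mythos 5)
Your proposal follows essentially the same route as the paper's proof: decompose $\kpro$ into $K$-Bender--Knuth involutions via Proposition~\ref{prop:kbk}, decompose $\pro_{\mathrm{id},(1,1,-1)}$ into the hyperplane toggles $T^{(a-1)+(b-1)-\ell}_{\mathrm{id},(1,1,-1)}$, and show that $\Psi_3$ conjugates the $\ell$th toggle to $\kbk_\ell$ by translating ``$(i,j,k)\in I$'' into ``the $(a-i,b-j)$ entry of $\Psi_3(I)$ is at least $\ell+1$'' and then matching the order-ideal legality condition against the singleton-short-ribbon condition by local case analysis. The case analysis you correctly identify as the heart of the matter (and defer) is exactly the two-case argument the paper carries out, so your plan is sound and aligned with the published proof.
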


\begin{proof}
Let $I \in J({\bf a} \times {\bf b} \times {\bf c})$ and let $T = \Psi_3(I)$. Note that the poset ${\bf a} \times {\bf b} \times {\bf c}$ has a $3$-dimensional lattice projection, in the sense of Definition~\ref{def:lattice_proj}, given by the identity map.

By Proposition~\ref{prop:kbk}, $\kpro(T) = \kbk_{a+b+c-2} \circ \dots \circ \kbk_1 (T)$.
Similarly, $\pro_{\mathrm{id},(1,1,-1)} = T_{\mathrm{id},(1,1,-1)}^{(a-1)+(b-1)-(a+b+c-2)} \circ \dots \circ T_{\mathrm{id},(1,1,-1)}^{(a-1)+(b-1)- 1}$.

Thus, it suffices to show that 
\[\Psi_3 \left( T_{\mathrm{id},(1,1,-1)}^{(a-1)+(b-1)-\ell}(I) \right) = \kbk_{\ell}(T).\]

By Definition~\ref{def:3DPro}, $T_{\mathrm{id},(1,1,-1)}^{(a-1)+(b-1)-\ell}$ is the product of the toggles $t_x$ for all $x \in {\bf a} \times {\bf b} \times {\bf c}$ lying on the affine hyperplane determined by $\langle x,(1,1,-1)\rangle=(a-1)+(b-1)-\ell$. Consider $x=(i,j,k)$ on this hyperplane. Then $i+j-k=(a-1)+(b-1)-\ell$. 

We have $x=(i,j,k) \in I$ if and only if the $(a-i,b-j)$ entry of $T$ is at least $k+(a-i)+(b-j)-1=k+a+b-i-j-1$. Since $k=i+j-(a-1)-(b-1)+\ell$, we can rewrite this condition as the $(a-i,b-j)$ entry of $T$ being at least $(i+j-(a-1)-(b-1)+\ell)+a+b-i-j-1=\ell+1$. Hence $x \in I$ if and only if the $(a-i,b-j)$ entry of $T$ is at least $\ell +1$.

\smallskip
\noindent
{\sf (Case 1: $x \in I$):} If $(i,j,k+1) \in I$, then $x$ is unaffected by the toggle and the $(a-i,b-j)$ entry of $T$ is at least $\ell+2$ and so unaffected by $\kbk_\ell$.

Otherwise $(i,j,k+1) \notin I$ and the $(a-i,b-j)$ entry of $T$ equals $\ell+1$. $\kbk_\ell$ will turn this $\ell+1$ into $\ell$ exactly when neither the $(a-i-1,b-j)$ nor the $(a-i,b-j-1)$ entry of $T$ equals $\ell$.  By increasingness of $T$, neither entry is greater than $\ell$. The $(a-i-1,b-j)$ entry of $T$ is at least $\ell$ exactly when $(i+1,j,k) \in I$. Similarly  the $(a-i,b-j-1)$ entry of $T$ is at least $\ell$ exactly when $(i,j+1,k) \in I$. Hence $\kbk_\ell$ will turn this $\ell+1$ into $\ell$ exactly when neither $(i+1,j,k)$ nor $(i,j+1,k)$ is in $I$. But this is exactly when the hyperplane toggle removes $x$ from $I$. Since $I$ is an order ideal, $(i,j,k-1) \in I$, so if $T_{\mathrm{id},(1,1,-1)}^{(a-1)+(b-1)-\ell}$ removes $x$ from $I$, then the $(a-i,b-j)$ entry of $\Psi_3 \left(T_{\mathrm{id},(1,1,-1)}^{(a-1)+(b-1)-\ell}(I) \right)$ equals $\ell$, as desired.

\smallskip
\noindent
{\sf (Case 2: $x \notin I$):}
The $(a-i,b-j)$ entry of $T$ is at most $\ell$. If it is less than $\ell$, then $(i,j,k-1) \notin I$. Hence $x$ is unaffected by the hyperplane toggle and the $(a-i,b-j)$ entry of $T$ is unaffected by $\kbk_\ell$.

Otherwise, the $(a-i,b-j)$ entry of $T$ equals $\ell$ and $(i,j,k-1) \in I$. $\kbk_\ell$ will turn this $\ell$ into $\ell+1$ exactly when neither the $(a-i+1,b-j)$ nor the $(a-i,b-j+1)$ entry of $T$ equals $\ell+1$. This happens exactly when both $(i-1,j,k) \in I$ and $(i,j-1,k) \in I$. Thus $T_{\mathrm{id},(1,1,-1)}^{(a-1)+(b-1)-\ell}$ toggles $x$ into $I$ exactly when $\kbk_\ell$ turns this $\ell$ into $\ell+1$.
\end{proof}

\begin{remark}
\label{r:symm}
By symmetry of $J({\bf a} \times {\bf b} \times {\bf c})$, we obtain analogous results for $\Psi_1$ and $\Psi_2$.
\end{remark}

As a consequence of the above lemma and Theorem~\ref{thm:3Dprorow}, we obtain the following.
\begin{theorem}
\label{cor:rowinc}
$J({\bf a} \times {\bf b} \times {\bf c})$ under $\row$ is in equivariant bijection with $\inc{a \times b}{a+b+c-1}$ under $\kpro$.
\end{theorem}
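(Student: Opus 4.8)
The plan is to obtain Theorem~\ref{cor:rowinc} by chaining together three ingredients already in place: the bijection $\Psi_3$ of Theorem~\ref{thm:mainbij}, the intertwining of $\pro_{\mathrm{id},(1,1,-1)}$ and $\kpro$ under $\Psi_3$ from Lemma~\ref{thm:main}, and the equivariance of all the operators $\pro_{\pi,v}^{\sigma}$ from Theorem~\ref{thm:3Dprorow}. First I would observe that Lemma~\ref{thm:main} already says precisely that $\Psi_3$ is an equivariant bijection between $J({\bf a}\times{\bf b}\times{\bf c})$ under $\pro_{\mathrm{id},(1,1,-1)}$ and $\inc{a\times b}{a+b+c-1}$ under $\kpro$: the commuting square in that lemma is exactly the defining condition of an equivariant bijection, once one notes that $\Psi_3$ is a bijection by Theorem~\ref{thm:mainbij}. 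So the only remaining task is to connect $\pro_{\mathrm{id},(1,1,-1)}$ to $\row$ on the plane partition side.

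The connection comes from Theorem~\ref{thm:3Dprorow} applied with $P = {\bf a}\times{\bf b}\times{\bf c}$, with $\pi = \mathrm{id}$ the identity $3$-dimensional lattice projection, with $v = (1,1,-1)$ and $w = (1,1,1)$, and with $\sigma,\tau$ both taken to be the identity bijections on the respective supports. Theorem~\ref{thm:3Dprorow} then produces an equivariant bijection between $J(P)$ under $\pro_{\mathrm{id},(1,1,-1)}$ and $J(P)$ under $\pro_{\mathrm{id},(1,1,1)}$. By the Proposition immediately preceding Definition~\ref{def:gyr} in the excerpt (the statement that $\pro_{\pi,(1,1,\ldots,1)} = \row$ for any finite ranked poset and lattice projection), the latter action is exactly $\row$. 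One caveat to address: $\pro_{\mathrm{id},(1,1,-1)}$ as used in Lemma~\ref{thm:main} is written as a specific product $T^{(a-1)+(b-1)-1}\circ\cdots\circ T^{(a-1)+(b-1)-(a+b+c-2)}$, i.e.\ with a particular ordering $\sigma$ of the hyperplane toggles; but since Theorem~\ref{thm:3Dprorow} gives equivariance for \emph{any} choice of $\sigma$, this is harmless, and in fact the displayed product is $\pro_{\mathrm{id},(1,1,-1)}^{\sigma}$ for the natural ordering $\sigma$.

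The proof then simply composes the two equivariant bijections: $J({\bf a}\times{\bf b}\times{\bf c})$ under $\row$ is equivariantly in bijection with $J({\bf a}\times{\bf b}\times{\bf c})$ under $\pro_{\mathrm{id},(1,1,-1)}$ (by Theorem~\ref{thm:3Dprorow} and the identification of $\pro_{\mathrm{id},(1,1,1)}$ with $\row$), which is in turn equivariantly in bijection with $\inc{a\times b}{a+b+c-1}$ under $\kpro$ (by Lemma~\ref{thm:main}). Since a composition of equivariant bijections is an equivariant bijection, this yields the theorem. I do not expect any genuine obstacle here — the substantive work was done in Lemma~\ref{thm:main} and Theorem~\ref{thm:3Dprorow}. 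The only point requiring a sentence of care is the bookkeeping that the orderings of hyperplane toggles used in the two input statements are compatible, which Theorem~\ref{thm:3Dprorow} handles by allowing arbitrary $\sigma$ and $\tau$.
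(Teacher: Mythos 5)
Your proposal is correct and matches the paper's argument exactly: the paper derives Theorem~\ref{cor:rowinc} as an immediate consequence of Lemma~\ref{thm:main} and Theorem~\ref{thm:3Dprorow}, using the identification $\pro_{\pi,(1,1,\ldots,1)}=\row$, precisely as you describe. Your extra remark about the ordering of hyperplane toggles being absorbed by the freedom in $\sigma$ is a reasonable point of care that the paper leaves implicit.
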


\subsection{Consequences of the bijection}
\label{subsec:conseq}
In this subsection, we give a number of consequences of Theorem~\ref{cor:rowinc}.
We first give another statement of resonance on plane partitions in Corollary~\ref{cor:row_resonance} (cf.\ Theorem~\ref{cor:resonance_of_pp_directly_row}). In Corollary~\ref{cor:trifold_sym}, we give $\kpro$-equivariant bijections between various sets of increasing tableaux using the tri-fold symmetry of $J({\bf a} \times {\bf b} \times {\bf c})$. We exploit this symmetry to prove Corollaries~\ref{cor:kproab} and~\ref{cor:ab1kpro}. We make a conjecture about the order of $\row$ on $J({\bf a} \times {\bf b} \times {\bf 3})$. Finally, in Theorem~\ref{thm:our_cfdf}, we improve the bound of Theorem~\ref{thm:cfdf} of P.~Cameron--D.~Fon-der-Flaass.

We obtain the following statement of resonance of rowmotion on plane partitions as a consequence of  Theorems~\ref{cor:rowinc} 
and~\ref{cor:resonance_of_increasing_tableaux}. Let $d$ be the toggle group element conjugating $\row$ to $\pro_{\mathrm{id},(1,1,-1)}$. (Theorem~\ref{thm:3Dprorow} guarantees the existence of such an element.)
\begin{corollary}
\label{cor:row_resonance}
$(J({\bf a} \times {\bf b} \times {\bf c}), \langle\row\rangle, {\rm Con}\circ \Psi_3 \circ d)$ exhibits resonance with frequency $a+b+c-1$.
\end{corollary}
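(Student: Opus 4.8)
The plan is to assemble Corollary~\ref{cor:row_resonance} from three pieces already in hand: the resonance of increasing tableaux under $\kpro$ (Theorem~\ref{cor:resonance_of_increasing_tableaux}), the equivariant bijection between $J({\bf a}\times{\bf b}\times{\bf c})$ under $\row$ and $\inc{a\times b}{a+b+c-1}$ under $\kpro$ (Theorem~\ref{cor:rowinc}), and the explicit form of that bijection provided by Lemma~\ref{thm:main}. The point is simply that ``resonance'' is a property that transports along equivariant bijections: if $(X,G,f)$ has resonance with frequency $\omega$ and $\phi\colon X'\to X$ is a $G$-equivariant bijection, then $(X',G,f\circ\phi)$ also has resonance with frequency $\omega$, with the same cyclic group $\mathcal{C}_\omega=\langle c\rangle$ acting on the same target $Y$. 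I would state and prove this transport observation first (it is a one-line diagram chase: $c\cdot(f\circ\phi)(x') = c\cdot f(\phi(x')) = f(g\cdot\phi(x')) = f(\phi(g\cdot x')) = (f\circ\phi)(g\cdot x')$, using resonance of $(X,G,f)$ in the middle and equivariance of $\phi$ at the end), or just invoke it inline.

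Next I would identify the relevant maps. By Theorem~\ref{thm:3Dprorow} there is a toggle group element $d$ conjugating $\row$ to $\pro_{\mathrm{id},(1,1,-1)}$, so $d$ (acting on $J({\bf a}\times{\bf b}\times{\bf c})$) gives an equivariant bijection between $J(P)$ under $\row$ and $J(P)$ under $\pro_{\mathrm{id},(1,1,-1)}$. By Lemma~\ref{thm:main}, $\Psi_3$ is an equivariant bijection between $J(P)$ under $\pro_{\mathrm{id},(1,1,-1)}$ and $\inc{a\times b}{a+b+c-1}$ under $\kpro$. Composing, $\Psi_3\circ d$ is an equivariant bijection from $(J(P),\langle\row\rangle)$ to $(\inc{a\times b}{a+b+c-1},\langle\kpro\rangle)$. (This composite is exactly the equivariant bijection underlying Theorem~\ref{cor:rowinc}.) Now apply the transport observation with $X = \inc{a\times b}{a+b+c-1}$, $G=\langle\kpro\rangle$, $f={\rm Con}$, $\omega = q = a+b+c-1$, $X' = J({\bf a}\times{\bf b}\times{\bf c})$, $G$ acting by $\row$, and $\phi = \Psi_3\circ d$: since Theorem~\ref{cor:resonance_of_increasing_tableaux} gives that $(\inc{a\times b}{a+b+c-1},\langle\kpro\rangle,{\rm Con})$ exhibits resonance with frequency $a+b+c-1$ (with $\mathcal{C}_\omega$ acting on binary words of length $a+b+c-1$ by cyclic rotation), we conclude that $(J({\bf a}\times{\bf b}\times{\bf c}),\langle\row\rangle,{\rm Con}\circ\Psi_3\circ d)$ exhibits resonance with the same frequency $a+b+c-1$, and this is precisely the statement.

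There is essentially no obstacle here — the corollary is a formal consequence of results already proved — so the ``hard part'' is only bookkeeping: being careful that $d$ and $\Psi_3$ compose in the right order and direction (which object $d$ acts on, and that $\row$ corresponds to $\pro_{\mathrm{id},(1,1,-1)}$ after conjugation, not before), and noting that the target set $Y$ and its $\mathcal{C}_\omega$-action (cyclic rotation of length-$(a+b+c-1)$ binary words) are inherited unchanged from Theorem~\ref{cor:resonance_of_increasing_tableaux}. I would write this as a short proof: recall that $\Psi_3\circ d$ is the equivariant bijection of Theorem~\ref{cor:rowinc}, observe that resonance pulls back along any equivariant bijection by the diagram chase above, and cite Theorem~\ref{cor:resonance_of_increasing_tableaux} for the frequency. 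One could optionally remark that this recovers Theorem~\ref{cor:resonance_of_pp_directly_row} with a possibly different (but orbit-structure-equivalent) choice of statistic, explaining the ``cf.''\ in the corollary statement.
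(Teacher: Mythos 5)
Your proposal is correct and matches the paper's own (implicit) argument exactly: the paper justifies this corollary by citing Theorems~\ref{cor:rowinc} and~\ref{cor:resonance_of_increasing_tableaux}, defining $d$ as the toggle group element conjugating $\row$ to $\pro_{\mathrm{id},(1,1,-1)}$ so that $\Psi_3\circ d$ is the $\row$--$\kpro$ equivariant bijection along which the resonance of ${\rm Con}$ transports. Your explicit diagram chase for the transport step is just a fleshed-out version of what the paper leaves to the reader.
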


\begin{remark}
Given the similarity between Corollary~\ref{cor:row_resonance} and 
Theorem~\ref{cor:resonance_of_pp_directly_row},
one may ask what the relation between $X_{\max}$ and ${\rm Con}$ may be. 
 The statistics $X_{\max}$ and  ${\rm Con}$ do not correspond exactly (via $\Psi_3$) since the action corresponding to $\kpro$ on plane partitions is $\pro_{\mathrm{id},(1,1,-1)}$ rather than $\pro_{\mathrm{id},(1,-1,1)}$, the action (studied in~\cite{prorow}) that cyclically shifts $X_{\max}$.
 Rather, one can see that $X_{\max}$ is the reverse of ${\rm Con}\circ \Psi_2$.
\end{remark}

We obtain the following corollary via the tri-fold symmetry of $J({\bf a} \times {\bf b} \times {\bf c})$.
\begin{corollary}
\label{cor:trifold_sym}
There are $\kpro$-equivariant bijections between the sets $\inc{a \times b }{a+b+c-1}$, $\inc{a \times c}{a+b+c-1}$, and $\inc{b \times c}{a+b+c-1}$.
\end{corollary}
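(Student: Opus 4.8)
The plan is to deduce this from Theorem~\ref{cor:rowinc} together with its analogues for the other two projections, all of which have the \emph{same} source $(J({\bf a} \times {\bf b} \times {\bf c}), \row)$. Write $q = a+b+c-1$. Lemma~\ref{thm:main} shows that $\Psi_3$ intertwines $\pro_{\mathrm{id},(1,1,-1)}$ with $\kpro$; by Remark~\ref{r:symm}, applying the poset automorphisms of ${\bf a}\times{\bf b}\times{\bf c}$ that permute the three chains yields, for each $i\in\{1,2,3\}$, an analogous statement that $\Psi_i$ intertwines $\pro_{\mathrm{id},v_i}$ with $\kpro$ for an appropriate sign vector $v_i$, where $v_3=(1,1,-1)$ and $v_1$, $v_2$ are the vectors obtained from it by moving the entry $-1$ into the first, resp.\ second, coordinate.

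First I would use Theorem~\ref{thm:3Dprorow}, which guarantees that all the operators $\pro_{\mathrm{id},v}^{\sigma}$ on $J({\bf a} \times {\bf b} \times {\bf c})$ are mutually conjugate in the toggle group, to pick toggle group elements $d_i$ with $d_i\circ\row\circ d_i^{-1}=\pro_{\mathrm{id},v_i}$. Then $\Phi_i:=\Psi_i\circ d_i$ is a bijection intertwining $\row$ with $\kpro$; for $i=3$ this is exactly Theorem~\ref{cor:rowinc}. Explicitly, tracking the face onto which each $\Psi_i$ projects, $\Phi_1:(J({\bf a} \times {\bf b} \times {\bf c}),\row)\to(\inc{b\times c}{q},\kpro)$, $\Phi_2:(J({\bf a} \times {\bf b} \times {\bf c}),\row)\to(\inc{a\times c}{q},\kpro)$, and $\Phi_3:(J({\bf a} \times {\bf b} \times {\bf c}),\row)\to(\inc{a\times b}{q},\kpro)$.

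Finally, for any $i\neq j$ the composite $\Phi_i\circ\Phi_j^{-1}$ is a bijection between two of the three tableau sets, and since each $\Phi_k$ carries $\row$ to $\kpro$, this composite carries $\kpro$ to $\kpro$; that is, it is $\kpro$-equivariant. Taking $\Phi_2\circ\Phi_3^{-1}$, $\Phi_1\circ\Phi_3^{-1}$, and $\Phi_1\circ\Phi_2^{-1}$ (together with their inverses) gives the claimed bijections.

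There is no genuine obstacle here: all the substance is already contained in Lemma~\ref{thm:main}, Theorem~\ref{thm:3Dprorow}, and the tri-fold symmetry of the box poset. The only point requiring care is the bookkeeping --- pinning down which sign vector $v_i$ corresponds to $\Psi_i$, and confirming that the $i$th face has the asserted side lengths --- but this is routine once one follows the relabelling of coordinates induced by the automorphism permuting the chains.
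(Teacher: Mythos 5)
Your proposal is correct and follows essentially the same route as the paper: both use Lemma~\ref{thm:main}, Remark~\ref{r:symm}, and Theorem~\ref{thm:3Dprorow} to produce conjugating toggle group elements and then compose the maps $\Psi_i$ with them. The only cosmetic difference is that you route each $\Psi_i$ through $\row$ via a conjugator $d_i$ and then compose, whereas the paper conjugates directly between the two relevant promotion operators (e.g.\ writing $\Psi_2\circ d_1\circ \Psi_3^{-1}$ with $d_1$ taking $\pro_{(1,1,-1)}$ to $\pro_{(1,-1,1)}$); the content is identical.
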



\begin{proof}

By Lemma~\ref{thm:main}, Remark~\ref{r:symm}, and Theorem~\ref{thm:3Dprorow}, $\Psi_2\circ d_1\circ \Psi_3^{-1}$ is a $\kpro$-equivariant bijection between $\inc{a \times b }{a+b+c-1}$ and $\inc{a \times c}{a+b+c-1}$, where $d_1$ is the toggle group element taking $\pro_{(1,1,-1)}$ to $\pro_{(1,-1,1)}$.

Similarly, $\Psi_1\circ d_2\circ \Psi_3^{-1}$ is an equivariant bijection between $\inc{a \times b }{a+b+c-1}$ and $\inc{b \times c}{a+b+c-1}$, where $d_2$ is the toggle group element taking $\pro_{(1,1,-1)}$ to $\pro_{(-1,1,1)}$. Theorem~\ref{thm:3Dprorow} guarantees the existence of such $d_1$ and $d_2$.

\end{proof}

Theorem~\ref{cor:rowinc} and Corollary~\ref{cor:trifold_sym} allow us to obtain a number of results for small values of~$c$. 
We obtain new proofs of known results Theorems~\ref{thm:square} and~\ref{thm:height2}, while Corollaries~\ref{cor:kproab} and~\ref{cor:ab1kpro} are new.

We use the following trivial facts about the order of $\kpro$ on particular increasing tableaux.
\begin{fact}\label{fact:1row_pro}
The order of $\kpro$ on $\inc{1 \times a}{q}$ is $q$.
\end{fact}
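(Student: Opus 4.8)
\textbf{Proof proposal for Fact~\ref{fact:1row_pro}.}
The plan is to compute $\kpro$ directly on a one-row increasing tableau, where the short ribbon analysis becomes trivial. Let $T \in \inc{1 \times a}{q}$, so $T$ is a strictly increasing sequence of $a$ integers drawn from $\{1, \dots, q\}$, occupying the single row of length $a$. In this degenerate shape there are no columns of length $2$ and no $2 \times 2$ subshapes, so every short ribbon appearing in the computation of $\kpro$ is either a single box or a horizontal run of consecutive boxes.

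First I would track what happens to the set of entries. By Lemma~\ref{prop:content_cycling}, the binary content ${\rm Con}(T) \in \{0,1\}^q$ is cyclically shifted by one position under each application of $\kpro$; equivalently, by Theorem~\ref{cor:resonance_of_increasing_tableaux}, $(\inc{1 \times a}{q}, \langle \kpro \rangle, {\rm Con})$ exhibits resonance with frequency $q$. Since ${\rm Con}$ returns to its starting value only after $q$ steps (for any $T$, as ${\rm rot}$ has order $q$ on $\{0,1\}^q$ unless $T$ has constant content, which is impossible here since $1 \le a \le q$ forces a nonconstant $0/1$-string when $a < q$, and when $a = q$ the content is all $1$'s and one argues separately), the order of $\kpro$ on $T$ is a multiple of $q$; in particular it is at least $q$.

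The key remaining step is to show $\kpro^q = {\rm id}$ on $\inc{1 \times a}{q}$, which I would do by observing that a one-row increasing tableau is determined by its set of entries: the unique increasing filling of a single row with a prescribed $a$-element subset $S \subseteq \{1, \dots, q\}$ is just the sorted list of $S$. Thus $T \mapsto {\rm Con}(T)$ is a \emph{bijection} from $\inc{1 \times a}{q}$ to the set of binary strings of length $q$ with exactly $a$ ones. Since ${\rm Con}$ intertwines $\kpro$ with ${\rm rot}$ (Lemma~\ref{prop:content_cycling}) and ${\rm rot}^q = {\rm id}$ on binary strings of length $q$, we get $\kpro^q(T) = T$ for all $T$. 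Combined with the lower bound, the order of $\kpro$ on $\inc{1 \times a}{q}$ is exactly $q$.

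The main obstacle is essentially bookkeeping: making sure the intertwining of $\kpro$ with ${\rm rot}$ via ${\rm Con}$ is actually a \emph{conjugacy} (i.e., that ${\rm Con}$ is injective on one-row tableaux, not merely equivariant), since resonance alone only yields that the orbit size is a multiple of the frequency, not equal to it. This is immediate once one notes that a one-row shape imposes no column constraints, so the tableau is recovered from its content by sorting. No delicate ribbon-switching analysis is needed beyond the observation, already implicit in Lemma~\ref{prop:content_cycling}'s proof, that on a single row the ribbon-switching steps never merge two boxes into a nontrivial ribbon in a way that would lose information about which labels are present.
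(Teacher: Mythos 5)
The paper states this as a ``trivial fact'' and offers no proof of its own, so there is nothing to compare against line by line; your proposal supplies an argument where the paper has none, and its core idea is the right one: on a one-row shape, ${\rm Con}$ is a \emph{bijection} from $\inc{1 \times a}{q}$ onto the length-$q$ binary strings with exactly $a$ ones, so Lemma~\ref{prop:content_cycling} upgrades from an equivariance to a conjugacy between $\kpro$ and ${\rm rot}$, and $\kpro^q = {\rm id}$ follows at once.

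There are, however, two flaws. First, your lower bound rests on a false intermediate claim: you assert that for any $T$ with nonconstant content, ${\rm Con}(T)$ returns to itself only after $q$ rotations, hence every $\kpro$-orbit has cardinality a multiple of $q$. Nonconstant binary strings can still be periodic with period a proper divisor of $q$: for $q=4$, $a=2$, the tableau $T$ with entries $\{1,3\}$ has ${\rm Con}(T)=(1,0,1,0)$, whose ${\rm rot}$-orbit has size $2$, and correspondingly $\kpro^2(T)=T$, so that orbit has size $2$, not a multiple of $4$. What you actually need --- and what your conjugacy gives you --- is that the order of ${\rm rot}$ on the \emph{set} of all $a$-element binary strings equals $q$, which follows from exhibiting one string with full orbit, e.g.\ $1^a0^{q-a}$ for $0<a<q$ (equivalently, the tableau with entries $1,\dots,a$ has $\kpro$-orbit of size exactly $q$). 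Second, the case $a=q$ that you defer with ``one argues separately'' cannot in fact be argued: $\inc{1\times q}{q}$ consists of the single tableau $1,2,\dots,q$, which $\kpro$ fixes, so the order is $1$ and the Fact as literally stated fails there. This is an imprecision in the paper's statement rather than in your approach --- in every application $q=a+b$ with $b\ge 1$, so $a<q$ --- but your proof should assume $0<a<q$ explicitly rather than suggest the boundary case can be salvaged.
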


\begin{fact}\label{fact:minimal_tab}
Let $q > a +b - 1$ and let $M \in \inc{a \times b}{q}$ be the boxwise-minimal increasing tableau of shape $a \times b$, that is the tableau with $1$'s along the first antidiagonal, $2$'s along the second antidiagonal, etc. Then the orbit of $M$ under $\kpro$ has cardinality $q$.
\end{fact}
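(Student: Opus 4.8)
The plan is to compute the $K$-promotion orbit of the minimal tableau $M$ directly, using the combinatorial characterization of $K$-promotion in terms of the Bender--Knuth involutions $\kbk_i$ from Proposition~\ref{prop:kbk}. The key observation is that $M$ has a very rigid structure: since $q > a+b-1$, the tableau $M$ uses exactly the labels $1, 2, \ldots, a+b-1$, each label $i$ occupying precisely the $i$th antidiagonal, which is a single short ribbon.

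First I would describe $\kpro(M)$ explicitly. Applying $K$-promotion to $M$ deletes the unique $1$ (the upper-left corner box), slides the empty box down through the antidiagonals — at each stage the empty box together with the $(i+1)$-antidiagonal forms a single short ribbon, so the $\kbk$-style swap moves the hole along and decrements; since $q > a+b-1$ the hole exits at the lower-right corner region and gets filled with a $q$. Tracking this carefully, one sees that $\kpro(M)$ is again a tableau whose label set is $\{q\} \cup \{1, \ldots, a+b-2\}$ with each value again confined to a single antidiagonal. More generally, I claim that $\kpro^k(M)$ is, up to a cyclic relabeling, still of ``antidiagonal type'': its binary content is the cyclic shift of $\mathrm{Con}(M) = (1^{a+b-1}, 0^{q-a-b+1})$ by $k$ steps (this is immediate from Lemma~\ref{prop:content_cycling}), and within each antidiagonal the label is constant. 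Establishing that the antidiagonal-constancy is preserved under $\kpro$ is the technical heart of the argument: one checks that whenever we act by $\kbk_i$ on a tableau in which both the $i$-antidiagonal and $(i+1)$-antidiagonal are single ribbons (or absent), the result still has this property, because a full antidiagonal is always a single short ribbon and the swap either does nothing or exchanges the two ribbons wholesale.

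Granting that $\kpro^k(M)$ is determined by its binary content together with the antidiagonal-constancy constraint, the orbit of $M$ is then in bijection with the orbit of $\mathrm{Con}(M)$ under cyclic rotation. Since $\mathrm{Con}(M) = (\underbrace{1, \ldots, 1}_{a+b-1}, \underbrace{0, \ldots, 0}_{q-a-b+1})$ is a binary word of length $q$ that is not constant (here we use $q > a+b-1$, so there is at least one $0$, and also $a+b-1 \geq 1$, so there is at least one $1$), its cyclic rotation orbit has size exactly $q$ — a binary word of length $q$ has rotation-orbit size a divisor of $q$, and the only way to get a proper divisor $d \mid q$, $d < q$, is for the word to be $d$-periodic, forcing the number of $1$'s to be a multiple of $q/d \geq 2$ copies of the block; but one checks the specific word $1^{a+b-1}0^{q-a-b+1}$ is aperiodic since it has a unique maximal run of $1$'s. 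Hence $\kpro^q(M) = M$ and no smaller power fixes $M$, so the orbit has cardinality exactly $q$.

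The main obstacle will be the bookkeeping in the second paragraph: verifying rigorously that antidiagonal-constancy is an invariant of the orbit, i.e.\ that no ``spreading'' of a label off its antidiagonal ever occurs. An alternative, possibly cleaner route that sidesteps this is to invoke Theorem~\ref{cor:rowinc}: under the bijection $\Psi_3^{-1}$, the minimal tableau $M$ corresponds to the empty order ideal (or a near-empty one) in $J(\mathbf{a} \times \mathbf{b} \times \mathbf{c})$ when $c$ is large, whose rowmotion orbit is classically known to have size $a+b+c-1 = q$; but since $q$ here is an arbitrary bound not necessarily equal to $a+b+c-1$ for a genuine box, one must instead argue directly, and padding the label alphabet does not change the orbit of $M$ because the extra labels never appear. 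I would therefore lead with the direct antidiagonal argument and remark on the rowmotion interpretation as motivation.
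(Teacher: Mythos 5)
The paper offers no proof of this statement: it is labeled a ``trivial fact'' and used without further justification, so there is nothing to compare against step by step. Your argument is a correct and reasonable way to supply the missing details, and the reduction is the right one: every tableau in the orbit is antidiagonal-constant, such a tableau is determined by its binary content, and the content word $1^{a+b-1}0^{\,q-a-b+1}$ is aperiodic (it has a unique maximal cyclic run of $1$'s, while both letters occur because $a+b-1\geq 1$ and $q>a+b-1$), so Lemma~\ref{prop:content_cycling} pins the orbit size at exactly $q$. One detail in your invariance step is stated incorrectly, though the conclusion survives: a full antidiagonal is \emph{not} a single short ribbon --- its boxes are pairwise non-adjacent, so it is a disjoint union of singleton components. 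The correct dichotomy for $\kbk_i$ acting on an antidiagonal-constant tableau is: (i) if both $i$ and $i+1$ occur, increasingness forces them onto \emph{adjacent} antidiagonals, whose union is a single connected nontrivial short ribbon, so $\kbk_i$ does nothing; (ii) if exactly one of $i$, $i+1$ occurs, its antidiagonal consists entirely of singleton components, each of which is toggled, so the whole antidiagonal is relabeled and constancy (and increasingness) is preserved. With that correction your induction closes. Two further small remarks: the aside that the extra labels ``never appear'' in the orbit is false (labels exceeding $a+b-1$ do appear after one application of $\kpro$), and the alternative route through Theorem~\ref{cor:rowinc} is in fact available for \emph{every} $q>a+b-1$ by taking $c=q-a-b+1\geq 1$, since then $M=\Psi_3(\emptyset)$ and the $\row$-orbit of the empty order ideal in $J(\mathbf{a}\times\mathbf{b}\times\mathbf{c})$ is the chain of rank ideals, of cardinality $a+b+c-1=q$; neither point affects the validity of your main argument.
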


The following is a new proof of Theorem~\ref{thm:square} of A.~Brouwer--A.~Schrijver \cite{brouwer.schrijver}, which we restate for convenience.
\begin{squarethm}
The order of $\row$ on $J({\bf a} \times {\bf b})$ is $a+b$.
\end{squarethm}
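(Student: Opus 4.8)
The plan is to reduce Theorem~3.1 to a statement about $K$-promotion on increasing tableaux via the equivariant bijection already established, and then verify that statement by exhibiting the right kind of tableau and controlling the orbit sizes. Concretely, take $c = 1$ in Theorem~\ref{cor:rowinc}: since $J(\mathbf{a}\times\mathbf{b}\times\mathbf{1})$ is just $J(\mathbf{a}\times\mathbf{b})$ (the single layer gives back the product of two chains, as a plane partition inside a box of height $1$ is the same data as an order ideal of $\mathbf{a}\times\mathbf{b}$), the theorem says that $J(\mathbf{a}\times\mathbf{b})$ under $\row$ is in equivariant bijection with $\inc{a\times b}{a+b}$ under $\kpro$. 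Hence it suffices to prove that the order of $\kpro$ on $\inc{a\times b}{a+b}$ is exactly $a+b$.

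First I would establish that the order divides $a+b$ (equivalently, that $\kpro^{a+b}$ is the identity on $\inc{a\times b}{a+b}$). The key observation is that an increasing tableau of shape $a\times b$ with all entries at most $a+b$ is very rigid: the boxwise-minimal tableau $M$ of Fact~\ref{fact:minimal_tab} already uses the labels $1,2,\dots,a+b-1$ on its antidiagonals, and any $T\in\inc{a\times b}{a+b}$ must in fact have full binary content (the longest strictly-increasing chain from the top-left to bottom-right box has length $a+b-1$, and increasingness forces every value in a range to be used). One then checks — e.g.\ using the $K$-Bender--Knuth description of Proposition~\ref{prop:kbk} and an induction, or a direct $K$-jeu-de-taquin argument — that in this extremal regime $\kpro$ behaves like ordinary promotion on a rectangle and that $\kpro^{a+b}$ is the identity. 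Alternatively, and perhaps more cleanly, I would transport this through the bijection in the other direction: it is classical (Theorem~\ref{thm:square} restated, but we cannot assume it) — so instead I would argue directly on tableaux that $\kpro^{a+b} = \mathrm{id}$ by showing each $\kbk_i$-word of length $a+b-1$ composes, over $a+b$ iterations, to the identity, exploiting that Lemma~\ref{prop:content_cycling} forces the content to return after $a+b$ steps and that full-content tableaux of this minimal height are determined by limited local data.

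Second I would show the order is not smaller than $a+b$, i.e.\ exhibit a single orbit of size exactly $a+b$. This is immediate from Fact~\ref{fact:minimal_tab}: with $q = a+b > a+b-1$, the boxwise-minimal tableau $M\in\inc{a\times b}{a+b}$ has $\kpro$-orbit of cardinality exactly $q = a+b$. Since the order of $\kpro$ on the whole set is the least common multiple of the orbit sizes, and we have just shown every orbit size divides $a+b$ while some orbit has size exactly $a+b$, the order is exactly $a+b$. Finally, by the equivariant bijection of Theorem~\ref{cor:rowinc} (with $c=1$), the order of $\row$ on $J(\mathbf{a}\times\mathbf{b})$ equals the order of $\kpro$ on $\inc{a\times b}{a+b}$, which is $a+b$, completing the proof.

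The main obstacle is the divisibility direction, $\kpro^{a+b} = \mathrm{id}$ on all of $\inc{a\times b}{a+b}$: the lower bound is essentially free from Fact~\ref{fact:minimal_tab}, but ruling out larger orbits requires genuinely understanding $K$-promotion in the minimal-height rectangular case. I expect the cleanest route is to observe that when $q = a+b$, every tableau has full binary content and the ribbon-switching steps of $K$-promotion never encounter a genuine $K$-theoretic ``doubling'' that fails to reverse — so $\kpro$ here coincides with classical Schützenberger promotion on standard Young tableaux of rectangular shape, whose order is known to be $a+b$ — but one must be careful to either cite or reprove that classical fact, or instead give a self-contained argument via the $\kbk_i$ involutions and the conjugacy/Coxeter-type relations in Lemma~\ref{lem:conjugatelem}.
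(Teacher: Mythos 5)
Your reduction to $K$-promotion is the right general idea, but you have chosen the wrong face of the box, and this leaves a genuine gap. Setting $c=1$ and applying $\Psi_3$ lands you in $\inc{a\times b}{a+b}$, so you must prove that $\kpro^{a+b}=\mathrm{id}$ there --- which is precisely the paper's Corollary~\ref{cor:kproab}, a \emph{new} result that the paper itself only obtains \emph{after} Theorem~3.1, via the tri-fold symmetry. Your sketch of this divisibility step does not go through as written. First, the claim that every $T\in\inc{a\times b}{a+b}$ has full binary content is false: the tableau $\ytableaushort{12,23}$ lies in $\inc{2\times 2}{4}$ and has binary content $(1,1,1,0)$. (The correct rigidity statement is that the entry of box $(i,j)$ is forced to be $i+j-1$ or $i+j$, which does not force every label to appear.) Second, these tableaux are not standard Young tableaux once $ab>a+b$ (labels must repeat), so the proposed identification of $\kpro$ with classical Sch\"utzenberger promotion on rectangular SYT is not available. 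The remaining suggestions (``limited local data,'' Coxeter-type relations) are not an argument.

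The paper avoids all of this by exploiting the coordinate symmetry of the box: $J(\mathbf{a}\times\mathbf{b}\times\mathbf{1})=J(\mathbf{a}\times\mathbf{1}\times\mathbf{b})$, and applying Theorem~\ref{cor:rowinc} to the latter gives an equivariant bijection with $\inc{a\times 1}{a+b}$ --- a single column --- where the order of $\kpro$ is trivially $q=a+b$ (Fact~\ref{fact:1row_pro}; a one-column increasing tableau is just an $a$-subset of $\{1,\dots,a+b\}$ and $\kpro$ cyclically rotates its indicator vector). Your lower-bound argument via the boxwise-minimal tableau is fine but unnecessary once you project to the thin face. If you want to salvage your route through $\inc{a\times b}{a+b}$, you would still need the equivariant bijection between the $\Psi_3$- and $\Psi_2$-images (Corollary~\ref{cor:trifold_sym}), at which point you have reproduced the paper's proof with extra steps.
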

\begin{proof}
The order of $\row$ on $J({\bf a} \times {\bf b})$ is the same as the order of $\row$ on $J({\bf a} \times {\bf b} \times {\bf 1})$. By Corollary~\ref{cor:rowinc}, the order of $\row$ on $J({\bf a} \times {\bf b} \times {\bf 1})$ equals the order of $\kpro$ on $\inc{a \times 1}{a+b}$. By Fact~\ref{fact:1row_pro}, the order of $\kpro$ on $\inc{a \times 1}{a+b}$ is $a+b$.
\end{proof}

The following result is new.
\begin{corollary}
\label{cor:kproab}
The order of $\kpro$ on $\inc{a \times b}{a+b}$ is $a + b$.
\end{corollary}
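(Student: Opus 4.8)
The plan is to specialize the equivariant bijection of Theorem~\ref{cor:rowinc} to the case $c=1$. The equality $a+b+c-1 = a+b$ forces $c=1$, so $\inc{a \times b}{a+b}$ is exactly the set appearing in Theorem~\ref{cor:rowinc} for the box $\mathbf{a} \times \mathbf{b} \times \mathbf{1}$; that theorem then supplies an equivariant bijection between $\inc{a \times b}{a+b}$ under $\kpro$ and $J(\mathbf{a} \times \mathbf{b} \times \mathbf{1})$ under $\row$. Because an equivariant bijection carries orbits to orbits of the same cardinality — and hence sends the generator of one cyclic action to a permutation of the same order — it suffices to determine the order of $\row$ on $J(\mathbf{a} \times \mathbf{b} \times \mathbf{1})$.

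Since the poset $\mathbf{a} \times \mathbf{b} \times \mathbf{1}$ is canonically isomorphic to $\mathbf{a} \times \mathbf{b}$ (the last coordinate being forced to $0$) and $\row$ is defined purely order-theoretically, the order of $\row$ on $J(\mathbf{a} \times \mathbf{b} \times \mathbf{1})$ equals the order of $\row$ on $J(\mathbf{a} \times \mathbf{b})$, which is $a+b$ by Theorem~\ref{thm:square}. Stringing these equalities together yields that the order of $\kpro$ on $\inc{a \times b}{a+b}$ is $a+b$. Equivalently, one can argue entirely on the tableau side: Corollary~\ref{cor:trifold_sym} with $c=1$ gives a $\kpro$-equivariant bijection between $\inc{a \times b}{a+b}$ and the single-column shape $\inc{a \times 1}{a+b}$, whose $\kpro$-order is $a+b$ by Fact~\ref{fact:1row_pro}.

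I do not expect any real obstacle here: the substance lies in the cited results (the equivariant bijection Theorem~\ref{cor:rowinc}, built on Lemma~\ref{thm:main} and Theorem~\ref{thm:3Dprorow}, together with Theorem~\ref{thm:square} and Fact~\ref{fact:1row_pro}), and all that remains is to check the arithmetic identity $c=1$ and the routine observation that an equivariant bijection preserves the order of a cyclic action. The one point worth stating carefully is that Theorem~\ref{thm:square} records the order \emph{exactly}, not merely as a divisor, so the conclusion is an exact equality rather than a divisibility statement.
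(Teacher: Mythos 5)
Your proposal is correct, and your second route is essentially verbatim the paper's proof: Corollary~\ref{cor:trifold_sym} with $c=1$ reduces to a one-row (or one-column) shape, and Fact~\ref{fact:1row_pro} finishes. Your first route, through Theorem~\ref{cor:rowinc} and the Brouwer--Schrijver Theorem~\ref{thm:square}, is also logically sound since Theorem~\ref{thm:square} has an independent original proof; just note that the paper runs that implication in the opposite direction, using Fact~\ref{fact:1row_pro} and the equivariant bijection to give a \emph{new} proof of Theorem~\ref{thm:square}, so the tableau-side argument is the one that keeps the paper self-contained.
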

\begin{proof}
By the tri-fold symmetry of Corollary~\ref{cor:trifold_sym},
there is a $\kpro$-equivariant bijection between the sets $\inc{a \times b}{a+b}$ and $\inc{1 \times a}{a+b}$. The result is then immediate by Fact~\ref{fact:1row_pro}.
\end{proof} 

We can also use Theorem~\ref{cor:rowinc} and Corollary~\ref{cor:trifold_sym} to show that the Theorem~\ref{thm:height2} of P.~Cameron--D.~Fon-der-Flaass \cite{fonderflaass} is equivalent to a theorem of the second author on increasing tableaux, thus providing 
a new proof of Theorem~\ref{thm:height2}. Alternatively, 
one may use Theorem~\ref{thm:height2} along with Theorem~\ref{cor:rowinc} and Corollary~\ref{cor:trifold_sym} to give a new proof of the second author's result.

\begin{height2thm}
The order of $\row$ on $J({\bf a} \times {\bf b} \times {\bf 2})$ is $a+b+1$.
\end{height2thm}
\begin{proof}
By Theorem~\ref{cor:rowinc} and Corollary~\ref{cor:trifold_sym}, the order of $\row$ on $J({\bf a} \times {\bf b} \times {\bf 2})$ equals the order of $\kpro$ on $\inc{2 \times a}{a+b+1}$. By \cite[Theorem~1.3]{pechenik}, the latter divides $a+b+1$. (The cited paper only considers those increasing tableaux of full binary content; however its proof extends easily to the case of general binary content.) The theorem then follows by Fact~\ref{fact:minimal_tab}.
\end{proof}

The following result is new.
\begin{corollary}
\label{cor:ab1kpro}
The order of $\kpro$ on $\inc{a \times b}{a+b+1}$ is $a + b + 1$.
\end{corollary}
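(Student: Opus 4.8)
The plan is to reduce immediately to the already-known order of rowmotion on plane partitions of height two. Apply the equivariant bijection of Theorem~\ref{cor:rowinc} with $c = 2$; since then $a + b + c - 1 = a + b + 1$, this says that $\inc{a\times b}{a+b+1}$ under $\kpro$ is in equivariant bijection with $J(\mathbf{a}\times\mathbf{b}\times\mathbf{2})$ under $\row$, and in particular these two actions have the same order. By Theorem~\ref{thm:height2}, the order of $\row$ on $J(\mathbf{a}\times\mathbf{b}\times\mathbf{2})$ is $a+b+1$. Hence the order of $\kpro$ on $\inc{a\times b}{a+b+1}$ is $a+b+1$. This parallels the proof of Corollary~\ref{cor:kproab}, where Theorem~\ref{cor:rowinc} with $c=1$ reduces to $J(\mathbf{a}\times\mathbf{b}\times\mathbf{1})\cong J(\mathbf{a}\times\mathbf{b})$ and Theorem~\ref{thm:square}.

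Alternatively, and in closer analogy with the proof of Corollary~\ref{cor:kproab} as actually presented, one can route the argument through the tri-fold symmetry of Corollary~\ref{cor:trifold_sym}: at $c = 2$ it provides a $\kpro$-equivariant bijection between $\inc{a\times b}{a+b+1}$ and $\inc{a\times 2}{a+b+1}$, and since $\kpro$ commutes with transposition (as used in Proposition~\ref{prop:transpose_descent_cycling}) the latter has the same order as $\kpro$ on $\inc{2\times a}{a+b+1}$. The order of $\kpro$ on $\inc{2\times a}{a+b+1}$ was determined inside the proof of Theorem~\ref{thm:height2} to be exactly $a+b+1$: it divides $a+b+1$ by \cite[Theorem~1.3]{pechenik} in the general-binary-content form remarked there, and the boxwise-minimal increasing tableau has an orbit of size $a+b+1$ by Fact~\ref{fact:minimal_tab} applied to the rectangle $2\times a$ with $q = a+b+1$.

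I do not anticipate a genuine obstacle: all of the substance sits in Theorems~\ref{cor:rowinc} and~\ref{thm:height2} (or, in the alternative route, in Corollary~\ref{cor:trifold_sym}, Fact~\ref{fact:minimal_tab}, and \cite[Theorem~1.3]{pechenik}). The only point requiring even a second glance, and only in the alternative route, is checking the hypothesis $q > 2 + a - 1$ of Fact~\ref{fact:minimal_tab}, which here reduces to $b \geq 1$; the direct route needs nothing beyond specializing $c = 2$ in Theorem~\ref{cor:rowinc}.
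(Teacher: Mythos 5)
Your second route is, almost word for word, the paper's own proof: it invokes Corollary~\ref{cor:trifold_sym} to pass from $\inc{a\times b}{a+b+1}$ to $\inc{2\times a}{a+b+1}$ and then concludes from \cite[Theorem~1.3]{pechenik} together with Fact~\ref{fact:minimal_tab}; your check that the hypothesis of Fact~\ref{fact:minimal_tab} reduces to $b\geq 1$ is correct and is left implicit in the paper. Your first route is genuinely different from what the paper does and is also valid: an equivariant bijection preserves the order of the action, so Theorem~\ref{cor:rowinc} with $c=2$ transfers the question to $\row$ on $J(\mathbf{a}\times\mathbf{b}\times\mathbf{2})$, where Theorem~\ref{thm:height2} gives the answer. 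The one point worth being explicit about is that this route must lean on the \emph{original} proof of Theorem~\ref{thm:height2} by P.~Cameron--D.~Fon-der-Flaass, since the paper's own re-proof of that theorem (in Section~\ref{subsec:conseq}) runs through exactly the same chain --- Theorem~\ref{cor:rowinc}, Corollary~\ref{cor:trifold_sym}, \cite[Theorem~1.3]{pechenik}, and Fact~\ref{fact:minimal_tab} --- so citing the paper's version would be circular. What each approach buys: your direct route is shorter and treats the height-$2$ rowmotion result as a black box, while the paper's route derives everything on the increasing-tableaux side from Pechenik's theorem, which is why the paper can simultaneously present Theorem~\ref{thm:height2} and Corollary~\ref{cor:ab1kpro} as two faces of the same computation rather than deducing one from the other.
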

\begin{proof}
By Corollary~\ref{cor:trifold_sym}, there is a $\kpro$-equivariant bijection between the sets $\inc{a \times b}{a+b+1}$ and $\inc{2 \times a}{a+b+1}$. The result is then immediate from \cite[Theorem~1.3]{pechenik} and Fact~\ref{fact:minimal_tab}.
\end{proof}

Recall that for $c >3$, the order of $\row$ on $J({\bf a} \times {\bf b} \times {\bf c})$ is generally greater than $a+b+c-1$. Nonetheless, we make the following conjecture.

\begin{conjecture}\label{conj:three_row}
The order of $\row$ on $J({\bf a} \times {\bf b} \times {\bf 3})$ is $a+b+2$.
\end{conjecture}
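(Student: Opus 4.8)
The statement is conjectural, so what follows is the approach I would take rather than a finished proof. \textbf{Reduction to increasing tableaux.} The plan is first to move the question off of plane partitions entirely. By Theorem~\ref{cor:rowinc} the order of $\row$ on $J({\bf a}\times{\bf b}\times{\bf 3})$ equals the order of $\kpro$ on $\inc{a\times b}{a+b+2}$, and by Corollary~\ref{cor:trifold_sym} this equals the order of $\kpro$ on $\inc{b\times 3}{a+b+2}$; since $\kpro$ commutes with transposition it also equals the order of $\kpro$ on the three-row shape $\inc{3\times b}{a+b+2}$. Write $q=a+b+2$. Because $a\ge1$ we have $q>b+2=3+b-1$, so Fact~\ref{fact:minimal_tab} exhibits a $\kpro$-orbit of size exactly $q$ on $\inc{3\times b}{q}$. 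Hence the conjecture is equivalent to the divisibility statement that $\kpro^{q}=\mathrm{id}$ on $\inc{3\times n}{q}$ for every $n\ge1$ and every $q\ge n+3$ --- precisely the three-row analogue of the $2\times n$ result \cite[Theorem~1.3]{pechenik} that we used to prove Theorem~\ref{thm:height2}. Everything is thus concentrated in proving periodicity of $\kpro$ on a fixed three-row shape.

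\textbf{Route one: a combinatorial model.} I would look for a bijection from $\inc{3\times n}{q}$ onto a family of lattice objects --- a three-row analogue of the (small) Schr\"oder-path model underlying the $2\times n$ case in \cite{pechenik}, for instance a suitable triple of noncrossing lattice paths, or a family of lozenge pictures of a fixed region --- under which $\kpro$ becomes a manifest rotation or cyclic shift of order dividing $q$. The reinterpretation of $\kpro$ as a product of $K$-Bender--Knuth involutions (Proposition~\ref{prop:kbk}) and the flow-path bookkeeping from the proof of Lemma~\ref{lem:descents} look like the right tools for checking that one promotion step corresponds to one step of the rotation.

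\textbf{Route two: cyclic sieving.} Alternatively, I would try to establish a cyclic sieving phenomenon for $(\inc{3\times n}{q},\langle\kpro\rangle,X)$ in which $X$ is a product of cyclotomic polynomials $\Phi_d$ with $d\mid q$; any such statement forces $\kpro^{q}=\mathrm{id}$. Increasing tableaux of shape $3\times n$ with entries at most $q$ are enumerated by a product formula with a natural $q$-analogue (obtainable, e.g., from Lindstr\"om--Gessel--Viennot or from $K$-theoretic Schubert calculus), so the enumerative side is in hand; the missing ingredient is a symmetry-group action on these tableaux realizing $\kpro$, which would be a $K$-theoretic lift of B.~Rhoades's treatment of rectangular standard Young tableaux \cite{rhoades}.

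\textbf{The main obstacle.} The hard part is exactly that $K$-jeu de taquin is not reversible in the way ordinary jeu de taquin is: short ribbons merge and split and the number of copies of a given label is not conserved, as the case analysis in the proof of Lemma~\ref{prop:content_cycling} already illustrates. Consequently no Haiman-style growth-diagram proof of promotion periodicity transfers, and one genuinely has to produce the correct model (Route one) or the correct graded enumeration together with a symmetry action (Route two). Two facts should keep the search tractable: resonance (Theorem~\ref{cor:resonance_of_increasing_tableaux}) already forces every $\kpro$-orbit on $\inc{3\times n}{q}$ to have size a multiple of a divisor of $q$, with Proposition~\ref{prop:content_descents} giving more in the prime case, so the residual task is only to upgrade ``multiple of a divisor of $q$'' to ``divisor of $q$'' uniformly; and the conjecture is easily verified by computer for small $a$ and $b$, which should guide the choice of model.
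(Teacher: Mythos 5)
The statement you were assigned is an open conjecture in the paper: the authors give no proof of Conjecture~\ref{conj:three_row}, only the remark that (by Theorem~\ref{cor:rowinc} and Corollary~\ref{cor:trifold_sym}) it is equivalent to $K$-promotion having order $a+b+2$ on $\inc{a \times b}{a+b+2}$ or on the three-row shape $\inc{3 \times a}{a+b+2}$, together with computational verification for $a \leq 7$. Your reduction --- down to the assertion that $\kpro^{q}=\mathrm{id}$ on $\inc{3\times n}{q}$ for $q \geq n+3$, with Fact~\ref{fact:minimal_tab} supplying an orbit of size exactly $q$ so that ``order divides $q$'' upgrades to ``order equals $q$'' --- is precisely the equivalence the paper records, and every step you actually assert is correct. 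The two routes you then sketch (a rotation-equivariant combinatorial model generalizing the Schr\"oder-path picture, or a cyclic sieving statement) go beyond anything in the paper but are not carried out, so, like the paper, you have not proved the statement; that is consistent with its status as a conjecture, and you correctly flag your write-up as a program rather than a proof.
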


As with the above corollaries, the results of this paper show that Conjecture~\ref{conj:three_row} is equivalent to the order of $K$-promotion being $a+b+2$ on either $\inc{a \times b}{a+b+2}$ or $\inc{3 \times a}{a+b+2}$. We have verified Conjecture~\ref{conj:three_row} for $a \leq 7$ and $b$ arbitrary.

Finally, we improve the bound in Theorem~\ref{thm:cfdf} of P.~Cameron--D.~Fon-der-Flaass~\cite{fonderflaass} by more than a factor of $\frac{2}{3}$. This is evidence toward the conjecture of P.~Cameron--D.~Fon-der-Flaass \cite{fonderflaass} that this condition may be dropped entirely.

\begin{theorem}
\label{thm:our_cfdf}
If $a + b + c -1$ is prime and $c > \displaystyle\frac{2ab-2}{3} - a - b + 2$, then the cardinality of every orbit of $\row$ on $J({\bf a} \times {\bf b} \times {\bf c})$ is a multiple of $a+b+c-1$.
\end{theorem}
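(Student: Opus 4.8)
The plan is to push everything through the equivariant bijection of Theorem~\ref{cor:rowinc} and then finish with a short counting argument powered by the descent-cycling results of Section~\ref{sec:descent_cycling}. Set $q := a+b+c-1$. By Theorem~\ref{cor:rowinc}, $J(\mathbf{a}\times\mathbf{b}\times\mathbf{c})$ under $\row$ is in equivariant bijection with $\inc{a\times b}{q}$ under $\kpro$, so it suffices to show that every $\kpro$-orbit on $\inc{a\times b}{q}$ has cardinality divisible by $q$. If $q=2$ then $a=b=c=1$ and the claim is trivial (the unique orbit has size $2=q$), so I will assume $q\ge 3$; being prime, $q$ is then odd.

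Next I will argue by contradiction. Suppose some $\kpro$-orbit $\mathcal{O}$ has size not divisible by $q$, and pick any $T\in\mathcal{O}$. Since $\mathcal{O}$ is exactly the $\kpro$-orbit of $T$, the contrapositive of Proposition~\ref{prop:content_descents} forces all three of its alternatives to fail for $T$: that is, $T$ has full binary content, every $1\le i\le q$ is a descent of $T$, and every $1\le i\le q$ is a transpose descent of $T$. Write $n_i$ for the number of entries of $T$ equal to $i$. Full binary content gives $n_i\ge 1$ for every $i$, and for each $1\le i\le q-1$ Lemma~\ref{lem:three_for_two} applies (since $i<q$ and $i$ is simultaneously a descent and a transpose descent of $T$), yielding $n_i+n_{i+1}\ge 3$.

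Then I will combine these bounds using that $T$ has exactly $ab$ boxes. Since $q$ is odd, grouping the labels into consecutive pairs with the top label left over gives
\[
ab \;=\; \sum_{i=1}^{q} n_i \;=\; (n_1+n_2)+(n_3+n_4)+\cdots+(n_{q-2}+n_{q-1})+n_q \;\ge\; 3\cdot\frac{q-1}{2}+1 \;=\; \frac{3q-1}{2},
\]
since each of the $(q-1)/2$ parenthesized pairs contributes at least $3$ and $n_q\ge 1$. Hence $2ab\ge 3q-1$, so $q\le\frac{2ab+1}{3}$; substituting $q=a+b+c-1$ and rearranging gives $c\le\frac{2ab-2}{3}-a-b+2$, contradicting the hypothesis. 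Therefore no such orbit exists, and every orbit of $\row$ on $J(\mathbf{a}\times\mathbf{b}\times\mathbf{c})$ has cardinality a multiple of $a+b+c-1$.

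I expect essentially no obstacle in the theorem itself: the substantive work is all upstream, in the bijection $\Psi_3$ (Theorem~\ref{cor:rowinc}) and in the descent-cycling analysis (Lemma~\ref{lem:descents}, Propositions~\ref{prop:descent_cycling} and~\ref{prop:transpose_descent_cycling}, hence Proposition~\ref{prop:content_descents}) together with Lemma~\ref{lem:three_for_two}. Granting all of that, this proof is just a pigeonhole count. The one point that needs care is the parity bookkeeping for $q$: the even grouping would give the slightly stronger bound $c\le\frac{2ab}{3}-a-b+1$, but since a prime $q\ge 3$ is always odd, the odd grouping is the relevant one and produces exactly the stated constant $\frac{2ab-2}{3}-a-b+2$.
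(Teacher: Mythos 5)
Your proposal is correct and follows essentially the same route as the paper's proof: reduce to $\inc{a\times b}{q}$ via Theorem~\ref{cor:rowinc}, use Proposition~\ref{prop:content_descents} to rule out all orbits except those where every label is present and every $i$ is both a descent and a transpose descent, then apply Lemma~\ref{lem:three_for_two} to the $(q-1)/2$ consecutive pairs plus the single $q$ to get $ab\ge 3\tfrac{q-1}{2}+1$, contradicting the hypothesis on $c$. The only (immaterial) difference is that you cite full binary content for $n_q\ge 1$ where the paper notes increasingness forces exactly one $q$; the arithmetic and grouping are identical.
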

\begin{proof}
Let $q = a+b+c-1$. The case $q=2$ is trivial, so assume $q$ is odd.

Consider $I \in J({\bf a} \times {\bf b} \times {\bf c})$ and let $T = \Psi_3(I) \in \inc{a \times b}{q}$. If $T$ does not have full binary content, then by Corollary~\ref{cor:weakest_CFdFlike_statement}, the $K$-promotion orbit of $T$ has cardinality a multiple of $q$. Hence by Theorem~\ref{cor:rowinc}, the rowmotion orbit of $I$ has cardinality a multiple of $q$, as claimed.
Thus, we may assume $T$ has full binary content.

Similarly, by Proposition~\ref{prop:content_descents}, we may assume that every $1 \leq i \leq q$ is both a descent and a transpose descent in $T$. Hence by Lemma~\ref{lem:three_for_two}, for $1 \leq j \leq \frac{q-1}{2}$, the number of $(2j-1)$'s in $T$ plus the number of $2j$'s in $T$ is at least $3$. By the increasingness conditions on $T$, there is exactly $1$ instance of $q$ in $T$. Thus the total number of labels in $T$ is at least $3\frac{q-1}{2} + 1$.

Since $T \in \inc{a \times b}{q}$, this forces $3\frac{a+b+c-2}{2} + 1 \leq ab$. Thus $c \leq \frac{2ab -2}{3} - a - b + 2$, contradicting the assumed bound on $c$.
\end{proof} 

\section{Resonance for other combinatorial objects}
\label{subsec:conj}

In this final section, we first give in Corollary~\ref{cor:fpl} an additional example of resonance of the \emph{gyration} action on 
\emph{fully packed loop configurations}. We then state Problems~\ref{prob:spro} and~\ref{prob:tsscpp} on proving new instances of resonance.


\begin{definition}
\label{def:fpl}
	Consider an $[n] \times [n]$ grid of dots in $\mathbb{Z}^2$ with edges between dots that are horizontally or vertically adjecent. Beginning with the dot at the upper left corner, draw an edge from that dot up one unit. Then go around the grid, drawing such an external edge at every second dot (counting corner dots twice, since at the corners, external edges could go in either of two directions). A {\bf fully-packed loop configuration (FPL)} of order $n$ is a subgraph of the grid graph
    described above, such that each of the $n^2$ vertices within the grid has exactly two incident edges. Let ${\rm FPL}_n$ be the set of all order $n$ fully packed loop configurations.
\end{definition}

There is a (non-injective) map from fully-packed loop configurations to their \emph{link patterns}. See Figure~\ref{fig:fplex} for an example.
\begin{definition} 
\label{def:lp}
	Given a fully-packed loop configuration, number the external edges clockwise, starting with the upper left external edge. Each external edge will be connected by a path to another external edge, and these paths will never cross.
	This matching on the external edges is a noncrossing matching on $2n$ points, and is called the {\bf link pattern} of the FPL.
\end{definition}

Consider the following action on fully-packed loop configurations; see Figure~\ref{fig:gyration_ex}.
\begin{definition}
\label{def:fplgyr}
Given an $[n]\times[n]$ grid of dots, color the interiors of the squares in a checkerboard pattern.  Given an FPL of order $n$ drawn on this grid, its \textbf{gyration}, {\rm Gyr}, is computed by first visiting all squares of one color then all squares of the other color,
applying at each visited square the ``local move'' that swaps the edges around a square if the edges are parallel and otherwise leaves them fixed.
\end{definition}

\begin{figure}[htb]
\[\includegraphics[scale = .2]{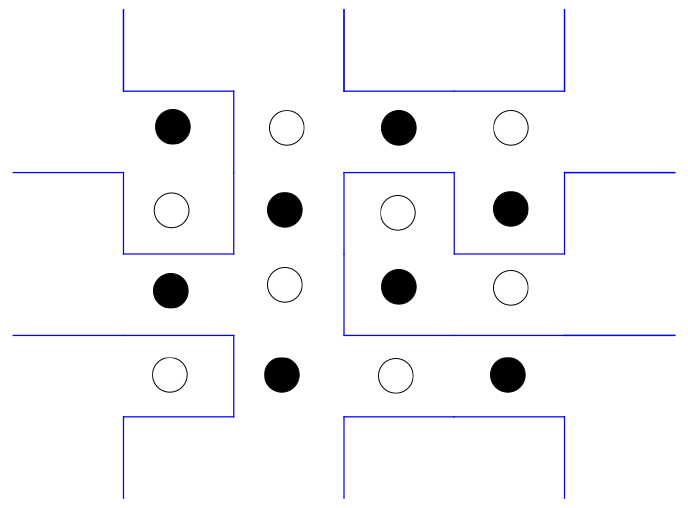}\hspace{2cm} \includegraphics[scale = .2]{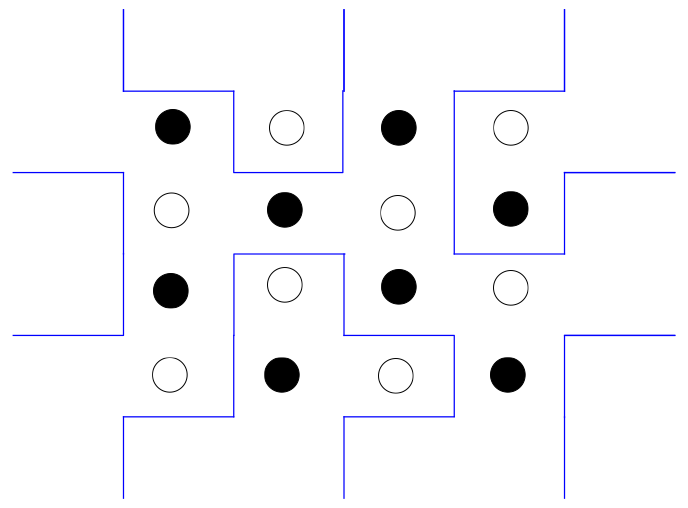}\hspace{1ex}\]
\caption{An example of gyration on the fully-packed loop configuration shown at left. First at each square marked with $\newmoon$, we replace the local configuration $\textcolor{blue}{\overline{\underline{\textcolor{black}{\newmoon}}}}$ with $\textcolor{blue}{|} \newmoon \textcolor{blue}{|}$ and vice versa, obtaining the picture on the right. Then we perform the same local switch at each square marked with $\fullmoon$. In this case, there are no local configurations $\textcolor{blue}{\overline{\underline{\textcolor{black}{\fullmoon}}}}$ or $\textcolor{blue}{|} \fullmoon \textcolor{blue}{|}$ in the picture on the right, so we obtain the fully-packed loop configuration on the right as the result of gyration.}\label{fig:gyration_ex}
\end{figure}

The following theorem of B.~Wieland gives a remarkable property of gyration.
\begin{theorem}[B.~Wieland \cite{wieland}]
Gyration of an FPL rotates the associated link pattern by an angle of $2\pi/2n$.
\end{theorem}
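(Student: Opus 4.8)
The plan is to follow B.~Wieland's original strategy, recast in the now-standard language of the dense loop model, where the asserted rotation of the link pattern becomes a statement about a single rigid ``drift'' step of the underlying loop configuration.

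First I would make the local structure of gyration explicit. Since adjacent faces of the square grid receive opposite colors, two faces of the same color never share a bounding edge; hence the local moves performed at faces of a single color have pairwise disjoint support and commute. Thus $\mathrm{Gyr} = W \circ B$, where $B$ (resp.\ $W$) is the involution that simultaneously applies the local move at every black (resp.\ white) face, and the order in which same-colored faces are visited is irrelevant. It therefore suffices to analyze the two ``half-gyrations'' $B$ and $W$.

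Next I would pass to the loop picture: replace each occupied grid edge by the corresponding loop segment inside the two incident faces, so that the FPL becomes a union of non-crossing closed loops and arcs, and the link pattern is read off directly as the pairing of the $2n$ external endpoints obtained by following the arcs. Under this dictionary, the local move at a face is a local rerouting of the strands meeting that face. The combinatorial heart of the argument is then a \emph{drift lemma}: a single half-gyration realizes ``half'' of a rigid diagonal unit shift of the loop configuration (the intermediate object naturally lives on a lattice with the two color classes interchanged), and the composite $W\circ B$ realizes the full diagonal unit shift. Because, on the boundary, this diagonal unit shift amounts to a one-step cyclic rotation of the $2n$ external edges, the net effect of $\mathrm{Gyr}$ on the connectivity of those endpoints is exactly the one-notch rotation of the link pattern, i.e.\ rotation by the angle $2\pi/2n$.

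The main obstacle is establishing the drift lemma. One must (i) set up the configuration space of ``FPL-like'' loop configurations on the interchanged lattice that is the image of $B$; (ii) verify face-by-face, by a finite case analysis on the admissible local occupations and the resulting vertex-degree constraints, that the local move is exactly the restriction to that face of the shift map; and --- most delicately --- (iii) track the boundary: although every local move is performed at an internal face, a strand that in $F$ exits at external edge $k$ is, after the full cascade of moves, rerouted to exit at external edge $k+1$, and it is this boundary bookkeeping that manufactures the rotation. Care is needed to see that the global composite of the internal local moves is consistent with a single rigid shift, with no spurious creation or destruction of loops, especially near the corners. Granting the drift lemma, the theorem is immediate: the link pattern of $\mathrm{Gyr}(F)$ is the one-step cyclic rotation of that of $F$. (An alternative I would keep in reserve is the transfer-matrix argument: gyration is a full row of $R$-matrices of the six-vertex model evaluated at the combinatorial point, and the Yang--Baxter relation together with the explicit form of that $R$-matrix forces the boundary connectivity to rotate; this is less elementary but sidesteps the delicate boundary analysis.)
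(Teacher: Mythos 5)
The paper does not prove this statement: it is quoted verbatim from B.~Wieland \cite{wieland}, so there is no internal proof to compare against. Your proposal correctly identifies the skeleton of Wieland's argument --- the decomposition $\mathrm{Gyr}=W\circ B$ into two families of simultaneous local moves (legitimate because two faces of the same color never share an edge, so the moves commute), the translation into the loop picture where the link pattern is the boundary connectivity, and the need for a lemma asserting that the boundary connectivity drifts by one notch.

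However, as written this is a plan rather than a proof. The entire mathematical content of the theorem is concentrated in what you call the drift lemma, and you do not establish it: items (i)--(iii) are a to-do list, and the face-by-face case analysis together with the boundary bookkeeping --- which is precisely where the one-notch rotation is manufactured and where Wieland's paper does its real work --- are described but not carried out. The phrasing also needs care: gyration does not rigidly shift the loop configuration itself (if it did, its orbit structure would be forced and, e.g., the size-$4$ orbit in Figure~\ref{fig:fplex} could not occur); what must be shown is that the \emph{connectivity} of the $2n$ external edges transforms as if by a unit shift, which requires tracking individual strands through the cascade of local moves, including the faces at which the move acts trivially, and verifying that no reconnections other than the boundary shift occur. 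Until that analysis is supplied (or the result is simply cited, as this paper does), the proposal does not constitute a proof.
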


We reformulate this theorem into a statement of resonance.
\begin{corollary}
\label{cor:fpl}
Let $f$ be the map from a fully-packed loop configuration to its link pattern. Then, $({\rm FPL}_n, \langle {\rm Gyr} \rangle, f)$ exhibits resonance with frequency $2n$.
\end{corollary}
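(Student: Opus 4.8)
The plan is to recognize that \textbf{Corollary~\ref{cor:fpl}} is essentially a repackaging of Wieland's theorem, with the only supplementary work being to verify that the data $({\rm FPL}_n, \langle {\rm Gyr}\rangle, f)$ fits the formal definition of resonance. First I would set up the target system $Y$: let $Y$ be the set of link patterns, i.e.\ noncrossing perfect matchings of the $2n$ external edges (labelled clockwise starting from the upper-left external edge), and let $c\colon Y\to Y$ be the map rotating a link pattern by the angle $2\pi/2n$, that is, sending a matching $\{\,i\leftrightarrow j\,\}$ to $\{\,(i+1)\leftrightarrow (j+1)\,\}$ with labels read modulo $2n$. Then $\mathcal{C}_{2n}:=\langle c\rangle$ is a cyclic group of order $2n$, and (for $n\geq 2$) it acts nontrivially on $Y$, since there is a noncrossing matching on $2n$ points fixed by no nonzero rotation.

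Next I would deal with surjectivity of $f$, which is required by the definition of resonance. One option is to invoke the known fact that every noncrossing perfect matching on $2n$ points arises as the link pattern of at least one order-$n$ FPL, so that $f$ maps onto all of $Y$. To keep things self-contained, a cleaner option is simply to take $Y:=f({\rm FPL}_n)$; this set is automatically stable under $c$, because Wieland's theorem gives $c\cdot f(X)=f({\rm Gyr}(X))\in f({\rm FPL}_n)$, and it is still acted on nontrivially by $c$ once $n\geq 2$.

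Finally, the commuting square. Wieland's theorem asserts precisely that ${\rm Gyr}$ of an FPL rotates its link pattern by $2\pi/2n$; in the notation above, $f({\rm Gyr}(X))=c\cdot f(X)$ for every $X\in{\rm FPL}_n$. This is exactly the commutativity condition of the definition of resonance, with $X={\rm FPL}_n$, $G=\langle {\rm Gyr}\rangle$, $g={\rm Gyr}$, and frequency $\omega=2n$. Hence $({\rm FPL}_n,\langle {\rm Gyr}\rangle, f)$ exhibits resonance with frequency $2n$.

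I do not expect any genuine obstacle: all of the mathematical content sits inside Wieland's theorem, which is quoted directly. The only points that need care are bookkeeping ones already indicated above --- confirming that rotation by $2\pi/2n$ has order exactly $2n$ and acts nontrivially on link patterns, and handling the surjectivity clause (trivially, by replacing $Y$ with the image of $f$ if one prefers not to invoke realizability of all noncrossing matchings).
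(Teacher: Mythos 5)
Your proposal is correct and matches the paper's treatment: the paper presents this corollary as a direct reformulation of Wieland's theorem, which is exactly the content of your commuting-square verification. The extra bookkeeping you supply (surjectivity onto the image, nontriviality of the rotation action) is sound and only makes explicit what the paper leaves implicit.
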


For example, consider gyration on $5\times 5$ fully-packed loops. Gyration has orbits of size 2, 4, 5, and 10. So the order of gyration in this case is 20, but $({\rm FPL}_5$, {\rm Gyr}, $f)$ exhibits resonance with frequency $10$. Consider the orbit of gyration in Figure~\ref{fig:fplex}. This orbit is of size 4, while the link pattern orbit is of size 2. So even though ${\rm Gyr}^{10}(A)\neq A$ for $A$ an FPL in this orbit, ${\rm rot}^{10}(f(A))=f(A)$ (since, in this case, ${\rm rot}^{2}(f(A))=f(A)$).

\begin{figure}[htbp]
\begin{center}

\includegraphics[scale = .145]{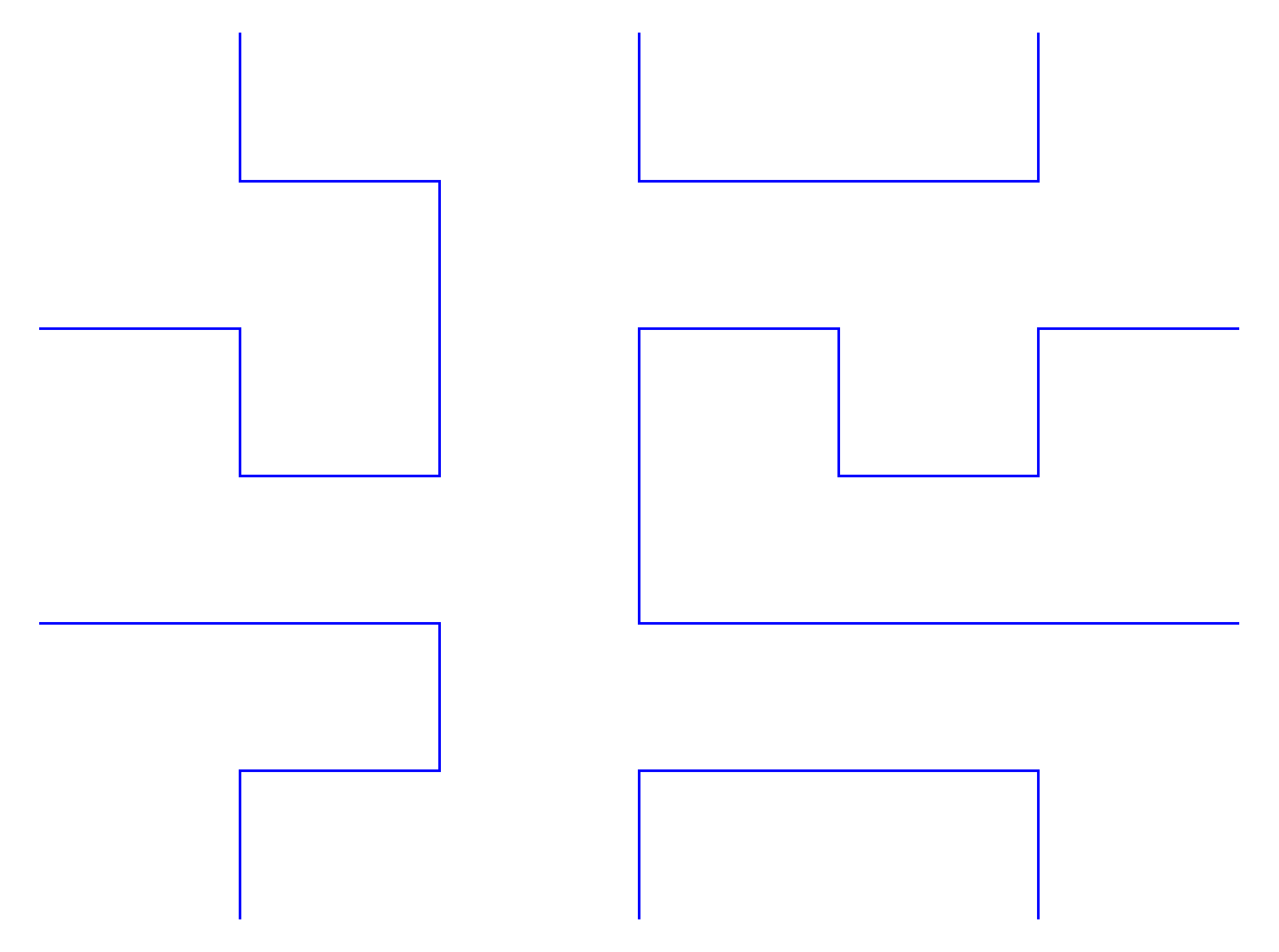}\hspace{1ex}
\includegraphics[scale = .145]{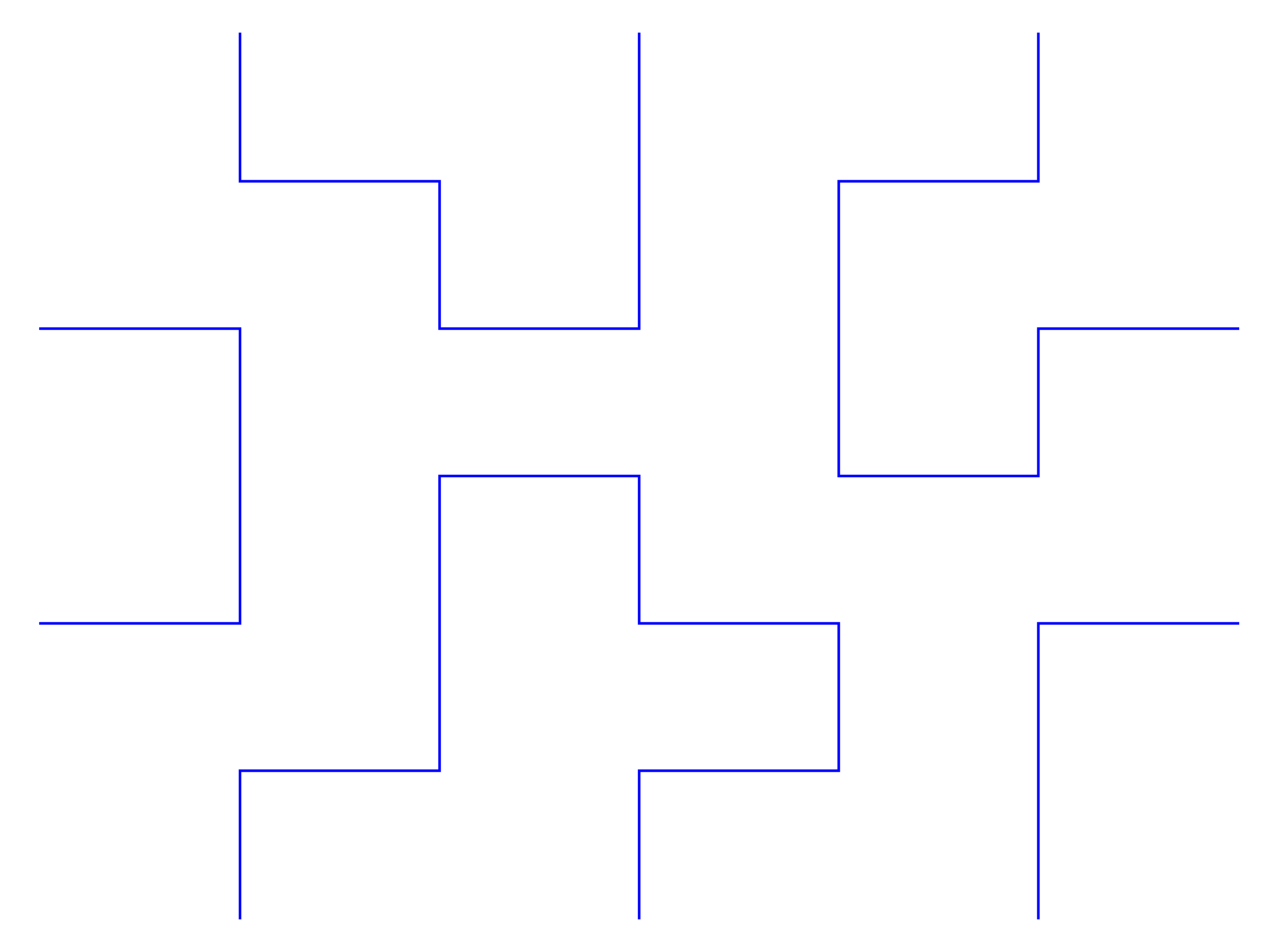}\hspace{1ex}
\includegraphics[scale = .145]{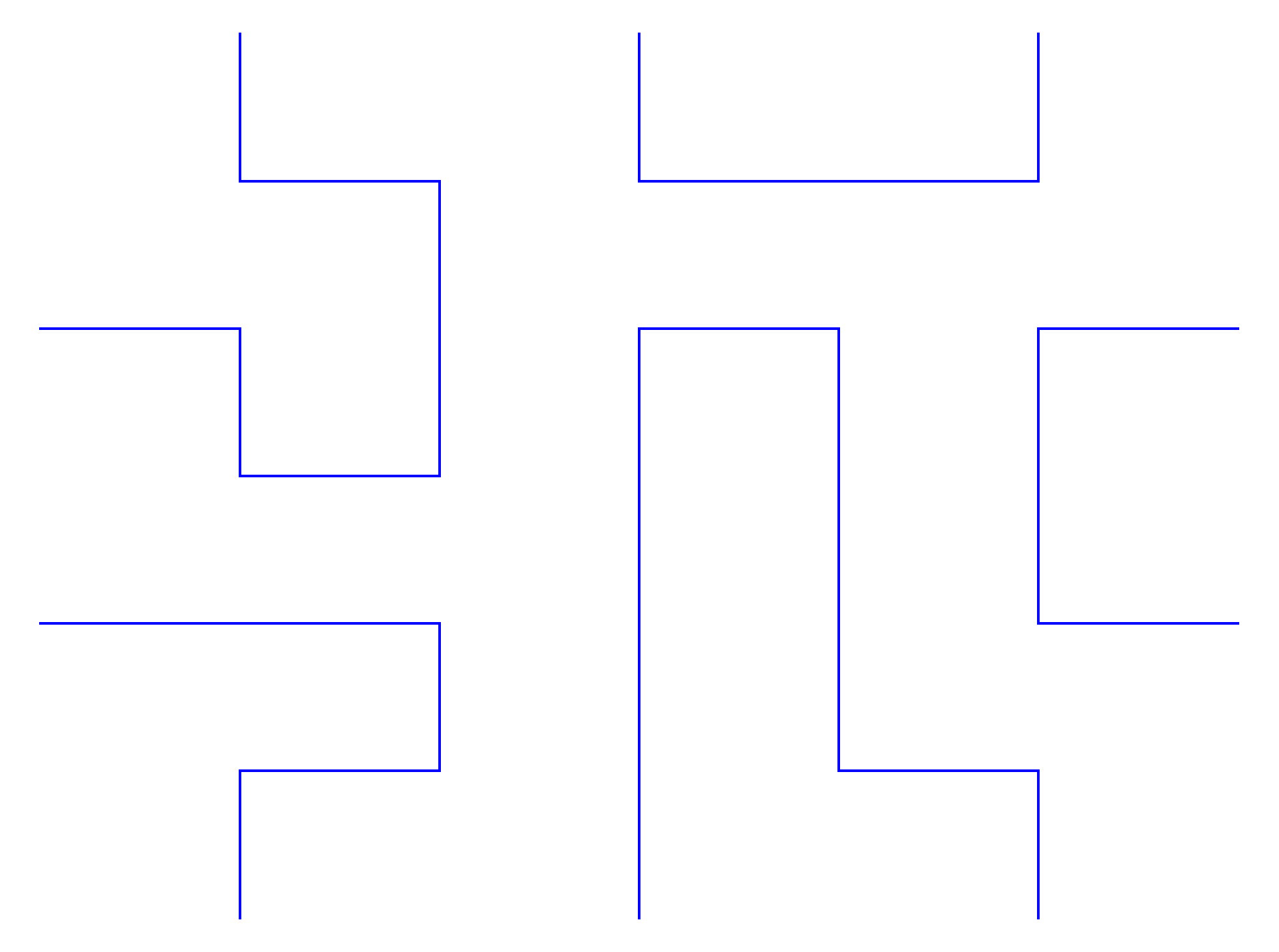}\hspace{1ex}
\includegraphics[scale = .145]{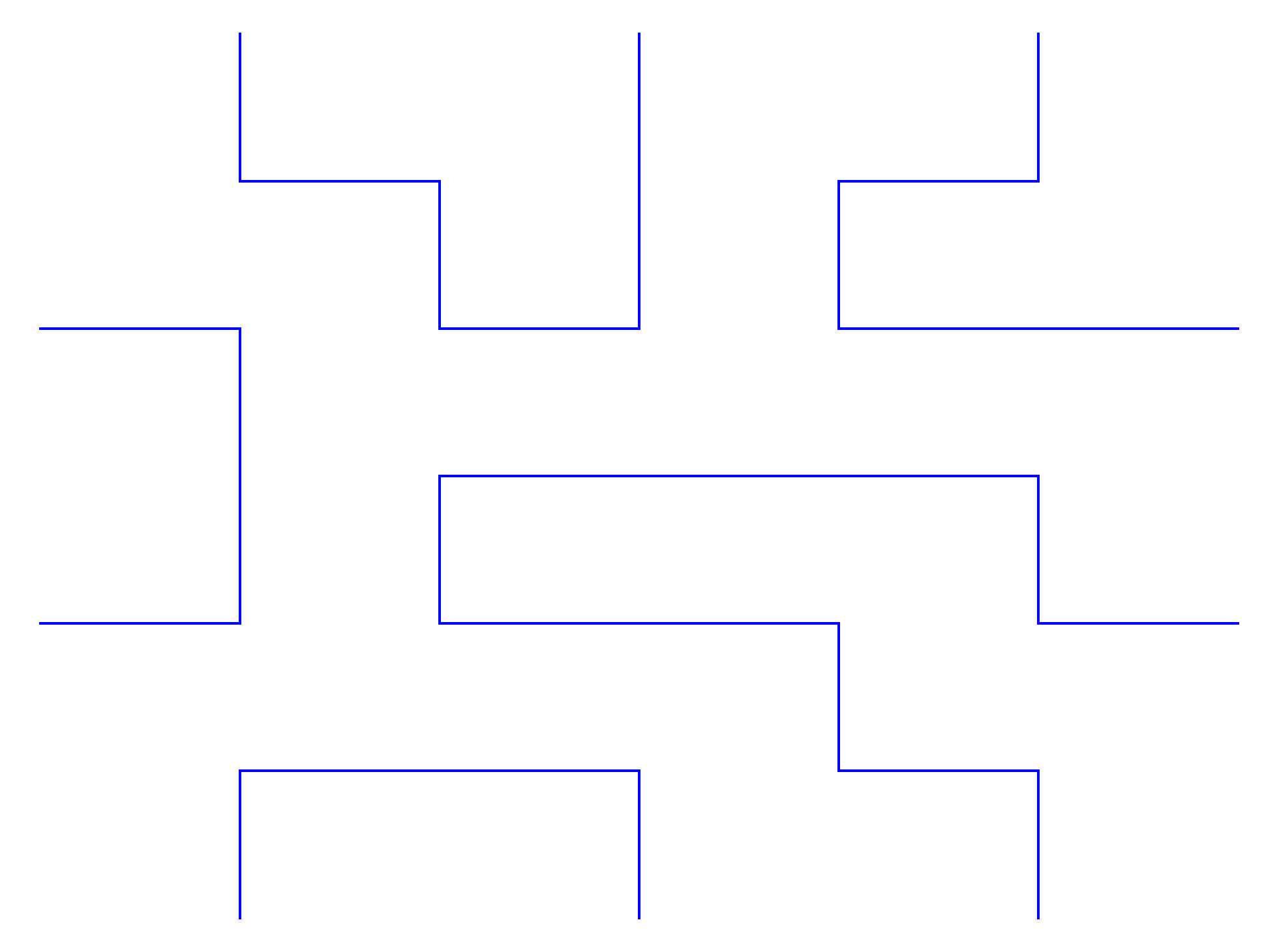}

\vspace{1ex}

\scalebox{.5}{
\begin{tikzpicture}
\useasboundingbox (0,0) rectangle (5.0cm,5.0cm);
\definecolor{cv0}{rgb}{0.0,0.0,0.0}
\definecolor{cfv0}{rgb}{1.0,1.0,1.0}
\definecolor{clv0}{rgb}{0.0,0.0,0.0}
\definecolor{cv1}{rgb}{0.0,0.0,0.0}
\definecolor{cfv1}{rgb}{1.0,1.0,1.0}
\definecolor{clv1}{rgb}{0.0,0.0,0.0}
\definecolor{cv2}{rgb}{0.0,0.0,0.0}
\definecolor{cfv2}{rgb}{1.0,1.0,1.0}
\definecolor{clv2}{rgb}{0.0,0.0,0.0}
\definecolor{cv3}{rgb}{0.0,0.0,0.0}
\definecolor{cfv3}{rgb}{1.0,1.0,1.0}
\definecolor{clv3}{rgb}{0.0,0.0,0.0}
\definecolor{cv4}{rgb}{0.0,0.0,0.0}
\definecolor{cfv4}{rgb}{1.0,1.0,1.0}
\definecolor{clv4}{rgb}{0.0,0.0,0.0}
\definecolor{cv5}{rgb}{0.0,0.0,0.0}
\definecolor{cfv5}{rgb}{1.0,1.0,1.0}
\definecolor{clv5}{rgb}{0.0,0.0,0.0}
\definecolor{cv6}{rgb}{0.0,0.0,0.0}
\definecolor{cfv6}{rgb}{1.0,1.0,1.0}
\definecolor{clv6}{rgb}{0.0,0.0,0.0}
\definecolor{cv7}{rgb}{0.0,0.0,0.0}
\definecolor{cfv7}{rgb}{1.0,1.0,1.0}
\definecolor{clv7}{rgb}{0.0,0.0,0.0}
\definecolor{cv8}{rgb}{0.0,0.0,0.0}
\definecolor{cfv8}{rgb}{1.0,1.0,1.0}
\definecolor{clv8}{rgb}{0.0,0.0,0.0}
\definecolor{cv9}{rgb}{0.0,0.0,0.0}
\definecolor{cfv9}{rgb}{1.0,1.0,1.0}
\definecolor{clv9}{rgb}{0.0,0.0,0.0}
\definecolor{cv0v9}{rgb}{0.0,0.0,0.0}
\definecolor{cv1v2}{rgb}{0.0,0.0,0.0}
\definecolor{cv3v4}{rgb}{0.0,0.0,0.0}
\definecolor{cv5v6}{rgb}{0.0,0.0,0.0}
\definecolor{cv7v8}{rgb}{0.0,0.0,0.0}
\Vertex[style={minimum size=0.4cm,draw=cv0,fill=cfv0,text=clv0,shape=circle},LabelOut=false,L=\hbox{$2$},x=2.5cm,y=5.0cm]{v0}
\Vertex[style={minimum size=0.4cm,draw=cv1,fill=cfv1,text=clv1,shape=circle},LabelOut=false,L=\hbox{$1$},x=0.9549cm,y=4.5225cm]{v1}
\Vertex[style={minimum size=0.4cm,draw=cv2,fill=cfv2,text=clv2,shape=circle},LabelOut=false,L=\hbox{$10$},x=0.0cm,y=3.2725cm]{v2}
\Vertex[style={minimum size=0.4cm,draw=cv3,fill=cfv3,text=clv3,shape=circle},LabelOut=false,L=\hbox{$9$},x=0.0cm,y=1.7275cm]{v3}
\Vertex[style={minimum size=0.4cm,draw=cv4,fill=cfv4,text=clv4,shape=circle},LabelOut=false,L=\hbox{$8$},x=0.9549cm,y=0.4775cm]{v4}
\Vertex[style={minimum size=0.4cm,draw=cv5,fill=cfv5,text=clv5,shape=circle},LabelOut=false,L=\hbox{$7$},x=2.5cm,y=0.0cm]{v5}
\Vertex[style={minimum size=0.4cm,draw=cv6,fill=cfv6,text=clv6,shape=circle},LabelOut=false,L=\hbox{$6$},x=4.0451cm,y=0.4775cm]{v6}
\Vertex[style={minimum size=0.4cm,draw=cv7,fill=cfv7,text=clv7,shape=circle},LabelOut=false,L=\hbox{$5$},x=5.0cm,y=1.7275cm]{v7}
\Vertex[style={minimum size=0.4cm,draw=cv8,fill=cfv8,text=clv8,shape=circle},LabelOut=false,L=\hbox{$4$},x=5.0cm,y=3.2725cm]{v8}
\Vertex[style={minimum size=0.4cm,draw=cv9,fill=cfv9,text=clv9,shape=circle},LabelOut=false,L=\hbox{$3$},x=4.0451cm,y=4.5225cm]{v9}
\Edge[lw=0.04cm,style={color=cv0v9,},](v0)(v9)
\Edge[lw=0.04cm,style={color=cv1v2,},](v1)(v2)
\Edge[lw=0.04cm,style={color=cv3v4,},](v3)(v4)
\Edge[lw=0.04cm,style={color=cv5v6,},](v5)(v6)
\Edge[lw=0.04cm,style={color=cv7v8,},](v7)(v8)
\end{tikzpicture}
}
\hspace{2ex}
\scalebox{.5}{
\begin{tikzpicture}
\useasboundingbox (0,0) rectangle (5.0cm,5.0cm);
\definecolor{cv0}{rgb}{0.0,0.0,0.0}
\definecolor{cfv0}{rgb}{1.0,1.0,1.0}
\definecolor{clv0}{rgb}{0.0,0.0,0.0}
\definecolor{cv1}{rgb}{0.0,0.0,0.0}
\definecolor{cfv1}{rgb}{1.0,1.0,1.0}
\definecolor{clv1}{rgb}{0.0,0.0,0.0}
\definecolor{cv2}{rgb}{0.0,0.0,0.0}
\definecolor{cfv2}{rgb}{1.0,1.0,1.0}
\definecolor{clv2}{rgb}{0.0,0.0,0.0}
\definecolor{cv3}{rgb}{0.0,0.0,0.0}
\definecolor{cfv3}{rgb}{1.0,1.0,1.0}
\definecolor{clv3}{rgb}{0.0,0.0,0.0}
\definecolor{cv4}{rgb}{0.0,0.0,0.0}
\definecolor{cfv4}{rgb}{1.0,1.0,1.0}
\definecolor{clv4}{rgb}{0.0,0.0,0.0}
\definecolor{cv5}{rgb}{0.0,0.0,0.0}
\definecolor{cfv5}{rgb}{1.0,1.0,1.0}
\definecolor{clv5}{rgb}{0.0,0.0,0.0}
\definecolor{cv6}{rgb}{0.0,0.0,0.0}
\definecolor{cfv6}{rgb}{1.0,1.0,1.0}
\definecolor{clv6}{rgb}{0.0,0.0,0.0}
\definecolor{cv7}{rgb}{0.0,0.0,0.0}
\definecolor{cfv7}{rgb}{1.0,1.0,1.0}
\definecolor{clv7}{rgb}{0.0,0.0,0.0}
\definecolor{cv8}{rgb}{0.0,0.0,0.0}
\definecolor{cfv8}{rgb}{1.0,1.0,1.0}
\definecolor{clv8}{rgb}{0.0,0.0,0.0}
\definecolor{cv9}{rgb}{0.0,0.0,0.0}
\definecolor{cfv9}{rgb}{1.0,1.0,1.0}
\definecolor{clv9}{rgb}{0.0,0.0,0.0}
\definecolor{cv0v1}{rgb}{0.0,0.0,0.0}
\definecolor{cv2v3}{rgb}{0.0,0.0,0.0}
\definecolor{cv4v5}{rgb}{0.0,0.0,0.0}
\definecolor{cv6v7}{rgb}{0.0,0.0,0.0}
\definecolor{cv8v9}{rgb}{0.0,0.0,0.0}
\Vertex[style={minimum size=0.4cm,draw=cv0,fill=cfv0,text=clv0,shape=circle},LabelOut=false,L=\hbox{$2$},x=2.5cm,y=5.0cm]{v0}
\Vertex[style={minimum size=0.4cm,draw=cv1,fill=cfv1,text=clv1,shape=circle},LabelOut=false,L=\hbox{$1$},x=0.9549cm,y=4.5225cm]{v1}
\Vertex[style={minimum size=0.4cm,draw=cv2,fill=cfv2,text=clv2,shape=circle},LabelOut=false,L=\hbox{$10$},x=0.0cm,y=3.2725cm]{v2}
\Vertex[style={minimum size=0.4cm,draw=cv3,fill=cfv3,text=clv3,shape=circle},LabelOut=false,L=\hbox{$9$},x=0.0cm,y=1.7275cm]{v3}
\Vertex[style={minimum size=0.4cm,draw=cv4,fill=cfv4,text=clv4,shape=circle},LabelOut=false,L=\hbox{$8$},x=0.9549cm,y=0.4775cm]{v4}
\Vertex[style={minimum size=0.4cm,draw=cv5,fill=cfv5,text=clv5,shape=circle},LabelOut=false,L=\hbox{$7$},x=2.5cm,y=0.0cm]{v5}
\Vertex[style={minimum size=0.4cm,draw=cv6,fill=cfv6,text=clv6,shape=circle},LabelOut=false,L=\hbox{$6$},x=4.0451cm,y=0.4775cm]{v6}
\Vertex[style={minimum size=0.4cm,draw=cv7,fill=cfv7,text=clv7,shape=circle},LabelOut=false,L=\hbox{$5$},x=5.0cm,y=1.7275cm]{v7}
\Vertex[style={minimum size=0.4cm,draw=cv8,fill=cfv8,text=clv8,shape=circle},LabelOut=false,L=\hbox{$4$},x=5.0cm,y=3.2725cm]{v8}
\Vertex[style={minimum size=0.4cm,draw=cv9,fill=cfv9,text=clv9,shape=circle},LabelOut=false,L=\hbox{$3$},x=4.0451cm,y=4.5225cm]{v9}
\Edge[lw=0.04cm,style={color=cv0v1,},](v0)(v1)
\Edge[lw=0.04cm,style={color=cv2v3,},](v2)(v3)
\Edge[lw=0.04cm,style={color=cv4v5,},](v4)(v5)
\Edge[lw=0.04cm,style={color=cv6v7,},](v6)(v7)
\Edge[lw=0.04cm,style={color=cv8v9,},](v8)(v9)
\end{tikzpicture}}
\hspace{2ex}
\scalebox{.5}{
\begin{tikzpicture}
\useasboundingbox (0,0) rectangle (5.0cm,5.0cm);
\definecolor{cv0}{rgb}{0.0,0.0,0.0}
\definecolor{cfv0}{rgb}{1.0,1.0,1.0}
\definecolor{clv0}{rgb}{0.0,0.0,0.0}
\definecolor{cv1}{rgb}{0.0,0.0,0.0}
\definecolor{cfv1}{rgb}{1.0,1.0,1.0}
\definecolor{clv1}{rgb}{0.0,0.0,0.0}
\definecolor{cv2}{rgb}{0.0,0.0,0.0}
\definecolor{cfv2}{rgb}{1.0,1.0,1.0}
\definecolor{clv2}{rgb}{0.0,0.0,0.0}
\definecolor{cv3}{rgb}{0.0,0.0,0.0}
\definecolor{cfv3}{rgb}{1.0,1.0,1.0}
\definecolor{clv3}{rgb}{0.0,0.0,0.0}
\definecolor{cv4}{rgb}{0.0,0.0,0.0}
\definecolor{cfv4}{rgb}{1.0,1.0,1.0}
\definecolor{clv4}{rgb}{0.0,0.0,0.0}
\definecolor{cv5}{rgb}{0.0,0.0,0.0}
\definecolor{cfv5}{rgb}{1.0,1.0,1.0}
\definecolor{clv5}{rgb}{0.0,0.0,0.0}
\definecolor{cv6}{rgb}{0.0,0.0,0.0}
\definecolor{cfv6}{rgb}{1.0,1.0,1.0}
\definecolor{clv6}{rgb}{0.0,0.0,0.0}
\definecolor{cv7}{rgb}{0.0,0.0,0.0}
\definecolor{cfv7}{rgb}{1.0,1.0,1.0}
\definecolor{clv7}{rgb}{0.0,0.0,0.0}
\definecolor{cv8}{rgb}{0.0,0.0,0.0}
\definecolor{cfv8}{rgb}{1.0,1.0,1.0}
\definecolor{clv8}{rgb}{0.0,0.0,0.0}
\definecolor{cv9}{rgb}{0.0,0.0,0.0}
\definecolor{cfv9}{rgb}{1.0,1.0,1.0}
\definecolor{clv9}{rgb}{0.0,0.0,0.0}
\definecolor{cv0v9}{rgb}{0.0,0.0,0.0}
\definecolor{cv1v2}{rgb}{0.0,0.0,0.0}
\definecolor{cv3v4}{rgb}{0.0,0.0,0.0}
\definecolor{cv5v6}{rgb}{0.0,0.0,0.0}
\definecolor{cv7v8}{rgb}{0.0,0.0,0.0}
\Vertex[style={minimum size=0.4cm,draw=cv0,fill=cfv0,text=clv0,shape=circle},LabelOut=false,L=\hbox{$2$},x=2.5cm,y=5.0cm]{v0}
\Vertex[style={minimum size=0.4cm,draw=cv1,fill=cfv1,text=clv1,shape=circle},LabelOut=false,L=\hbox{$1$},x=0.9549cm,y=4.5225cm]{v1}
\Vertex[style={minimum size=0.4cm,draw=cv2,fill=cfv2,text=clv2,shape=circle},LabelOut=false,L=\hbox{$10$},x=0.0cm,y=3.2725cm]{v2}
\Vertex[style={minimum size=0.4cm,draw=cv3,fill=cfv3,text=clv3,shape=circle},LabelOut=false,L=\hbox{$9$},x=0.0cm,y=1.7275cm]{v3}
\Vertex[style={minimum size=0.4cm,draw=cv4,fill=cfv4,text=clv4,shape=circle},LabelOut=false,L=\hbox{$8$},x=0.9549cm,y=0.4775cm]{v4}
\Vertex[style={minimum size=0.4cm,draw=cv5,fill=cfv5,text=clv5,shape=circle},LabelOut=false,L=\hbox{$7$},x=2.5cm,y=0.0cm]{v5}
\Vertex[style={minimum size=0.4cm,draw=cv6,fill=cfv6,text=clv6,shape=circle},LabelOut=false,L=\hbox{$6$},x=4.0451cm,y=0.4775cm]{v6}
\Vertex[style={minimum size=0.4cm,draw=cv7,fill=cfv7,text=clv7,shape=circle},LabelOut=false,L=\hbox{$5$},x=5.0cm,y=1.7275cm]{v7}
\Vertex[style={minimum size=0.4cm,draw=cv8,fill=cfv8,text=clv8,shape=circle},LabelOut=false,L=\hbox{$4$},x=5.0cm,y=3.2725cm]{v8}
\Vertex[style={minimum size=0.4cm,draw=cv9,fill=cfv9,text=clv9,shape=circle},LabelOut=false,L=\hbox{$3$},x=4.0451cm,y=4.5225cm]{v9}
\Edge[lw=0.04cm,style={color=cv0v9,},](v0)(v9)
\Edge[lw=0.04cm,style={color=cv1v2,},](v1)(v2)
\Edge[lw=0.04cm,style={color=cv3v4,},](v3)(v4)
\Edge[lw=0.04cm,style={color=cv5v6,},](v5)(v6)
\Edge[lw=0.04cm,style={color=cv7v8,},](v7)(v8)
\end{tikzpicture}
}
\hspace{2ex}
\scalebox{.5}{
\begin{tikzpicture}
\useasboundingbox (0,0) rectangle (5.0cm,5.0cm);
\definecolor{cv0}{rgb}{0.0,0.0,0.0}
\definecolor{cfv0}{rgb}{1.0,1.0,1.0}
\definecolor{clv0}{rgb}{0.0,0.0,0.0}
\definecolor{cv1}{rgb}{0.0,0.0,0.0}
\definecolor{cfv1}{rgb}{1.0,1.0,1.0}
\definecolor{clv1}{rgb}{0.0,0.0,0.0}
\definecolor{cv2}{rgb}{0.0,0.0,0.0}
\definecolor{cfv2}{rgb}{1.0,1.0,1.0}
\definecolor{clv2}{rgb}{0.0,0.0,0.0}
\definecolor{cv3}{rgb}{0.0,0.0,0.0}
\definecolor{cfv3}{rgb}{1.0,1.0,1.0}
\definecolor{clv3}{rgb}{0.0,0.0,0.0}
\definecolor{cv4}{rgb}{0.0,0.0,0.0}
\definecolor{cfv4}{rgb}{1.0,1.0,1.0}
\definecolor{clv4}{rgb}{0.0,0.0,0.0}
\definecolor{cv5}{rgb}{0.0,0.0,0.0}
\definecolor{cfv5}{rgb}{1.0,1.0,1.0}
\definecolor{clv5}{rgb}{0.0,0.0,0.0}
\definecolor{cv6}{rgb}{0.0,0.0,0.0}
\definecolor{cfv6}{rgb}{1.0,1.0,1.0}
\definecolor{clv6}{rgb}{0.0,0.0,0.0}
\definecolor{cv7}{rgb}{0.0,0.0,0.0}
\definecolor{cfv7}{rgb}{1.0,1.0,1.0}
\definecolor{clv7}{rgb}{0.0,0.0,0.0}
\definecolor{cv8}{rgb}{0.0,0.0,0.0}
\definecolor{cfv8}{rgb}{1.0,1.0,1.0}
\definecolor{clv8}{rgb}{0.0,0.0,0.0}
\definecolor{cv9}{rgb}{0.0,0.0,0.0}
\definecolor{cfv9}{rgb}{1.0,1.0,1.0}
\definecolor{clv9}{rgb}{0.0,0.0,0.0}
\definecolor{cv0v1}{rgb}{0.0,0.0,0.0}
\definecolor{cv2v3}{rgb}{0.0,0.0,0.0}
\definecolor{cv4v5}{rgb}{0.0,0.0,0.0}
\definecolor{cv6v7}{rgb}{0.0,0.0,0.0}
\definecolor{cv8v9}{rgb}{0.0,0.0,0.0}
\Vertex[style={minimum size=0.4cm,draw=cv0,fill=cfv0,text=clv0,shape=circle},LabelOut=false,L=\hbox{$2$},x=2.5cm,y=5.0cm]{v0}
\Vertex[style={minimum size=0.4cm,draw=cv1,fill=cfv1,text=clv1,shape=circle},LabelOut=false,L=\hbox{$1$},x=0.9549cm,y=4.5225cm]{v1}
\Vertex[style={minimum size=0.4cm,draw=cv2,fill=cfv2,text=clv2,shape=circle},LabelOut=false,L=\hbox{$10$},x=0.0cm,y=3.2725cm]{v2}
\Vertex[style={minimum size=0.4cm,draw=cv3,fill=cfv3,text=clv3,shape=circle},LabelOut=false,L=\hbox{$9$},x=0.0cm,y=1.7275cm]{v3}
\Vertex[style={minimum size=0.4cm,draw=cv4,fill=cfv4,text=clv4,shape=circle},LabelOut=false,L=\hbox{$8$},x=0.9549cm,y=0.4775cm]{v4}
\Vertex[style={minimum size=0.4cm,draw=cv5,fill=cfv5,text=clv5,shape=circle},LabelOut=false,L=\hbox{$7$},x=2.5cm,y=0.0cm]{v5}
\Vertex[style={minimum size=0.4cm,draw=cv6,fill=cfv6,text=clv6,shape=circle},LabelOut=false,L=\hbox{$6$},x=4.0451cm,y=0.4775cm]{v6}
\Vertex[style={minimum size=0.4cm,draw=cv7,fill=cfv7,text=clv7,shape=circle},LabelOut=false,L=\hbox{$5$},x=5.0cm,y=1.7275cm]{v7}
\Vertex[style={minimum size=0.4cm,draw=cv8,fill=cfv8,text=clv8,shape=circle},LabelOut=false,L=\hbox{$4$},x=5.0cm,y=3.2725cm]{v8}
\Vertex[style={minimum size=0.4cm,draw=cv9,fill=cfv9,text=clv9,shape=circle},LabelOut=false,L=\hbox{$3$},x=4.0451cm,y=4.5225cm]{v9}
\Edge[lw=0.04cm,style={color=cv0v1,},](v0)(v1)
\Edge[lw=0.04cm,style={color=cv2v3,},](v2)(v3)
\Edge[lw=0.04cm,style={color=cv4v5,},](v4)(v5)
\Edge[lw=0.04cm,style={color=cv6v7,},](v6)(v7)
\Edge[lw=0.04cm,style={color=cv8v9,},](v8)(v9)
\end{tikzpicture}}

\end{center}
\caption{A length 4 gyration orbit in $FPL_5$, with corresponding link patterns.}
\label{fig:fplex}
\end{figure}

As another example, the FPL in Figure~\ref{fig:fplex2} is in a gyration orbit of size $84$ ($=12\cdot 7$), while $({\rm FPL}_6$, {\rm Gyr}, $f)$ exhibits resonance with frequency $12$.  
So even though ${\rm Gyr}^{12}(A)\neq A$ for $A$ an FPL in this orbit, ${\rm rot}^{12}(f(A))=f(A)$.

\begin{figure}
\begin{center}
\includegraphics[scale=.25]{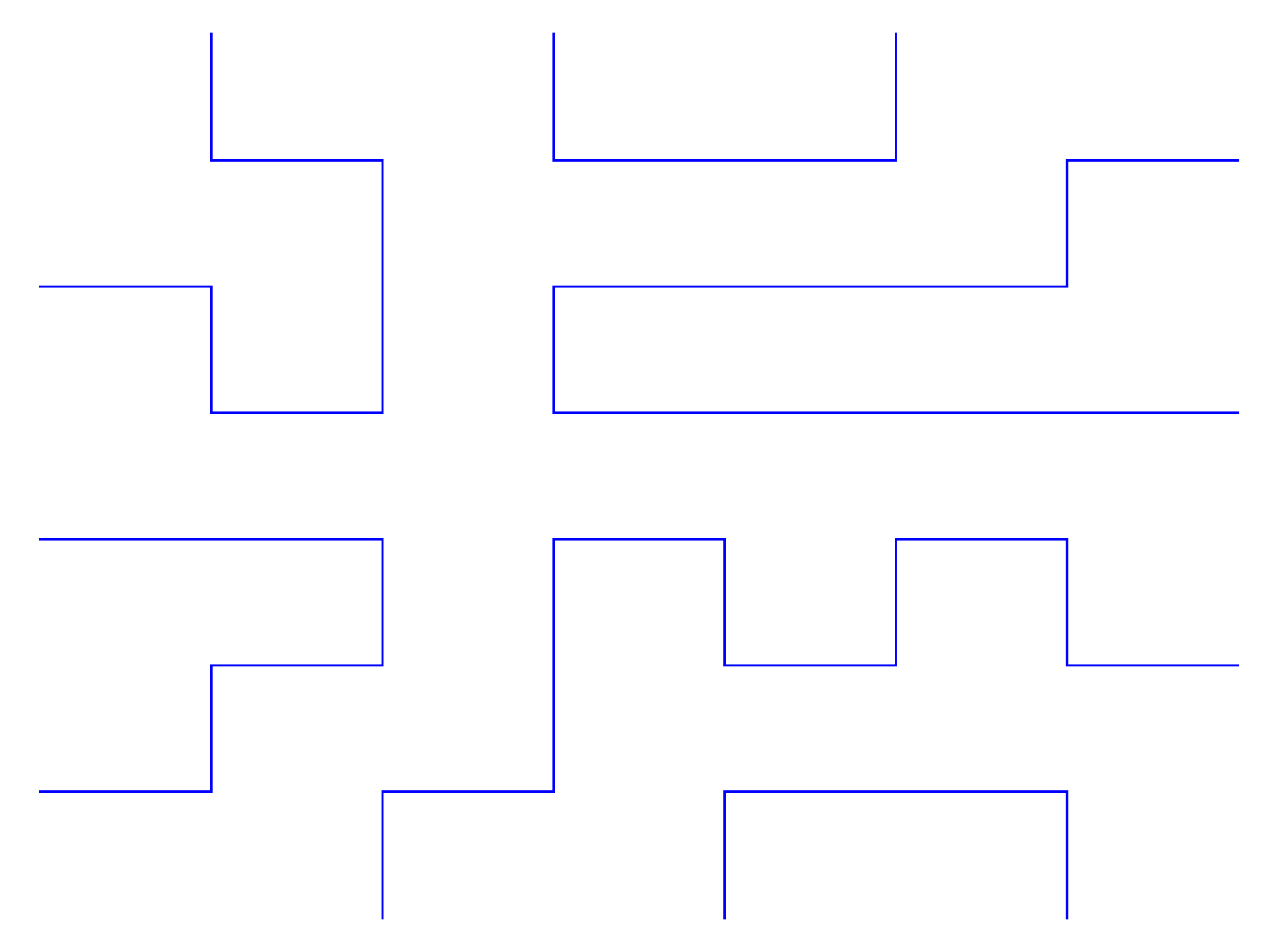}
\hspace{.2in}
\scalebox{.7}{
\begin{tikzpicture}
\useasboundingbox (0,0) rectangle (5.0cm,5.0cm);
\definecolor{cv0}{rgb}{0.0,0.0,0.0}
\definecolor{cfv0}{rgb}{1.0,1.0,1.0}
\definecolor{clv0}{rgb}{0.0,0.0,0.0}
\definecolor{cv1}{rgb}{0.0,0.0,0.0}
\definecolor{cfv1}{rgb}{1.0,1.0,1.0}
\definecolor{clv1}{rgb}{0.0,0.0,0.0}
\definecolor{cv2}{rgb}{0.0,0.0,0.0}
\definecolor{cfv2}{rgb}{1.0,1.0,1.0}
\definecolor{clv2}{rgb}{0.0,0.0,0.0}
\definecolor{cv3}{rgb}{0.0,0.0,0.0}
\definecolor{cfv3}{rgb}{1.0,1.0,1.0}
\definecolor{clv3}{rgb}{0.0,0.0,0.0}
\definecolor{cv4}{rgb}{0.0,0.0,0.0}
\definecolor{cfv4}{rgb}{1.0,1.0,1.0}
\definecolor{clv4}{rgb}{0.0,0.0,0.0}
\definecolor{cv5}{rgb}{0.0,0.0,0.0}
\definecolor{cfv5}{rgb}{1.0,1.0,1.0}
\definecolor{clv5}{rgb}{0.0,0.0,0.0}
\definecolor{cv6}{rgb}{0.0,0.0,0.0}
\definecolor{cfv6}{rgb}{1.0,1.0,1.0}
\definecolor{clv6}{rgb}{0.0,0.0,0.0}
\definecolor{cv7}{rgb}{0.0,0.0,0.0}
\definecolor{cfv7}{rgb}{1.0,1.0,1.0}
\definecolor{clv7}{rgb}{0.0,0.0,0.0}
\definecolor{cv8}{rgb}{0.0,0.0,0.0}
\definecolor{cfv8}{rgb}{1.0,1.0,1.0}
\definecolor{clv8}{rgb}{0.0,0.0,0.0}
\definecolor{cv9}{rgb}{0.0,0.0,0.0}
\definecolor{cfv9}{rgb}{1.0,1.0,1.0}
\definecolor{clv9}{rgb}{0.0,0.0,0.0}
\definecolor{cv10}{rgb}{0.0,0.0,0.0}
\definecolor{cfv10}{rgb}{1.0,1.0,1.0}
\definecolor{clv10}{rgb}{0.0,0.0,0.0}
\definecolor{cv11}{rgb}{0.0,0.0,0.0}
\definecolor{cfv11}{rgb}{1.0,1.0,1.0}
\definecolor{clv11}{rgb}{0.0,0.0,0.0}
\definecolor{cv0v11}{rgb}{0.0,0.0,0.0}
\definecolor{cv1v2}{rgb}{0.0,0.0,0.0}
\definecolor{cv3v4}{rgb}{0.0,0.0,0.0}
\definecolor{cv5v8}{rgb}{0.0,0.0,0.0}
\definecolor{cv6v7}{rgb}{0.0,0.0,0.0}
\definecolor{cv9v10}{rgb}{0.0,0.0,0.0}
\Vertex[style={minimum size=0.4cm,draw=cv0,fill=cfv0,text=clv0,shape=circle},LabelOut=false,L=\hbox{$2$},x=2.5cm,y=5.0cm]{v0}
\Vertex[style={minimum size=0.4cm,draw=cv1,fill=cfv1,text=clv1,shape=circle},LabelOut=false,L=\hbox{$1$},x=1.25cm,y=4.6651cm]{v1}
\Vertex[style={minimum size=0.4cm,draw=cv2,fill=cfv2,text=clv2,shape=circle},LabelOut=false,L=\hbox{$12$},x=0.3349cm,y=3.75cm]{v2}
\Vertex[style={minimum size=0.4cm,draw=cv3,fill=cfv3,text=clv3,shape=circle},LabelOut=false,L=\hbox{$11$},x=0.0cm,y=2.5cm]{v3}
\Vertex[style={minimum size=0.4cm,draw=cv4,fill=cfv4,text=clv4,shape=circle},LabelOut=false,L=\hbox{$10$},x=0.3349cm,y=1.25cm]{v4}
\Vertex[style={minimum size=0.4cm,draw=cv5,fill=cfv5,text=clv5,shape=circle},LabelOut=false,L=\hbox{$9$},x=1.25cm,y=0.3349cm]{v5}
\Vertex[style={minimum size=0.4cm,draw=cv6,fill=cfv6,text=clv6,shape=circle},LabelOut=false,L=\hbox{$8$},x=2.5cm,y=0.0cm]{v6}
\Vertex[style={minimum size=0.4cm,draw=cv7,fill=cfv7,text=clv7,shape=circle},LabelOut=false,L=\hbox{$7$},x=3.75cm,y=0.3349cm]{v7}
\Vertex[style={minimum size=0.4cm,draw=cv8,fill=cfv8,text=clv8,shape=circle},LabelOut=false,L=\hbox{$6$},x=4.6651cm,y=1.25cm]{v8}
\Vertex[style={minimum size=0.4cm,draw=cv9,fill=cfv9,text=clv9,shape=circle},LabelOut=false,L=\hbox{$5$},x=5.0cm,y=2.5cm]{v9}
\Vertex[style={minimum size=0.4cm,draw=cv10,fill=cfv10,text=clv10,shape=circle},LabelOut=false,L=\hbox{$4$},x=4.6651cm,y=3.75cm]{v10}
\Vertex[style={minimum size=0.4cm,draw=cv11,fill=cfv11,text=clv11,shape=circle},LabelOut=false,L=\hbox{$3$},x=3.75cm,y=4.6651cm]{v11}
\Edge[lw=0.04cm,style={color=cv0v11,},](v0)(v11)
\Edge[lw=0.04cm,style={color=cv1v2,},](v1)(v2)
\Edge[lw=0.04cm,style={color=cv3v4,},](v3)(v4)
\Edge[lw=0.04cm,style={color=cv5v8,},](v5)(v8)
\Edge[lw=0.04cm,style={color=cv6v7,},](v6)(v7)
\Edge[lw=0.04cm,style={color=cv9v10,},](v9)(v10)
\end{tikzpicture}}

\end{center}
\caption{A $6\times 6$ FPL with gyration orbit of length 84, and its link pattern}
\label{fig:fplex2}
\end{figure}

\smallskip
Finally, in Problems~\ref{prob:spro} and~\ref{prob:tsscpp} below, we reformulate some observations from~\cite{prorow} in terms of resonance; for additional details, see~\cite[Sections~8.3 and~8.4]{prorow}.

Fully-packed loops are known to be in bijection with \emph{alternating sign matrices}~\cite{wieland,ProppManyFaces}. Alternating sign matrices were introduced by D.\ Robbins--H.~Rumsey \cite{RobbinsRumsey} as part of their study of the lambda-determinant. With W.~Mills \cite{MRRASM}, they then conjectured an enumeration for $n\times n$ alternating sign matrices, which was proved by D.~Zeilberger~\cite{zeilberger} and G.~Kuperberg~\cite{kuperberg} (cf.~\cite{BressoudBook} for a detailed exposition of this history).


There is a poset $\mathbf{A}_n$ whose order ideals are in bijection with $n\times n$ alternating sign matrices (denote this set as $\mathrm{ASM}_n$), such that gyration of Definition~\ref{def:fplgyr} is equivalent to the action of the toggle group element $\mathrm{Gyr}$ of Definition~\ref{def:gyr}. For details, see \cite[Section~8]{prorow} and \cite{razstrogrow}.
Another element, $\mathrm{SPro}$, of the toggle group on $\mathbf{A}_n$ was introduced in \cite[Definition~8.14]{prorow}. It is shown in \cite[Theorem~8.15]{prorow} that the orbit of the empty order ideal in $J(\mathbf{A}_n)$ under $\mathrm{SPro}$ has cardinality $3n-2$. Further data contained in \cite[Figure~22]{prorow} leads us to propose the following.

\begin{problem}
\label{prob:spro}
Construct a natural map $f$ such that
$({\rm ASM}_n, \mathrm{SPro}, f)$ exhibits resonance with frequency $3n-2$.
\end{problem}

Similarly, there is a poset $\mathbf{T}_n$ whose order ideals are in bijection with \emph{totally symmetric self-complementary plane partitions} inside a $2n\times 2n\times 2n$ box (denote this set as $\mathrm{TSSCPP}_n$). For details, see \cite[Section~8]{prorow} and \cite{tetra,razstrogrow}. It is shown in \cite[Theorem~8.19]{prorow} that the cardinality of the rowmotion-orbit of the empty order ideal in $J(\mathbf{T}_n)$ is $3n-2$. Further data contained in \cite[Figure~22]{prorow} suggests the following.
\begin{problem}
\label{prob:tsscpp}
Construct a natural map $f$ such that
$({\rm TSSCPP}_n, \row, f)$ exhibits resonance with frequency $3n-2$.
\end{problem}

We suspect that a solution to the above problems would be a major step towards exhibiting an explicit bijection between ${\rm ASM}_n$ and $\mathrm{TSSCPP}_n$, which are known (non-bijectively) to be equinumerous~\cite{ANDREWS_TSSCPP,zeilberger,kuperberg}.

\section*{Acknowledgments} This work began at the ``Dynamical Algebraic Combinatorics'' workshop at the American Institute of Mathematics (AIM) in March 2015. The authors are grateful for many inspiring conversations with the participants. The authors would like to thank Jim Propp, Tom Roby, and Nathan Williams for their organizational efforts, as well as AIM for funding this workshop and the AIM staff for their hospitality.

We thank two anonymous referees for helpful suggestions that significantly improved the exposition of this paper.

KD was supported by RTG grant NSF/DMS-1148634. OP was supported by an Illinois Distinguished Fellowship and an NSF Graduate Research Fellowship. JS was supported by a National Security Agency Grant (H98230-15-1-0041), the North Dakota EPSCoR National Science Foundation Grant (IIA-1355466), and the NDSU Advance FORWARD program sponsored by National Science Foundation grant (HRD-0811239).

\end{document}